\newtheorem{theorem}{Theorem}[section]
\newtheorem{lemma}[theorem]{Lemma}
\newtheorem{corollary}[theorem]{Corollary}
\newtheorem{definition}[theorem]{Definition}
\newtheorem{example}[theorem]{Example}
\newtheorem{fact}[theorem]{Fact}
\newtheorem{remark}[theorem]{Remark}
\newenvironment{proof}
{\begin{trivlist}  \item \textsc{Proof:}~} {\hfill $\Box$
\end{trivlist}}
\newenvironment{claim}
{\begin{trivlist}  \item \textsc{Claim:}~} {\end{trivlist}}
\newenvironment{proof of claim}
{\begin{trivlist}  \item \textsc{Proof of Claim:}~} {\hfill $\Box$ (\textsc{Claim})
\end{trivlist}}
\newenvironment{proof of theorem}
{\begin{trivlist}  \item \textsc{Proof of Theorem \ref{symmetry}:}~} {\hfill $\Box$
\end{trivlist}}
\newcommand{\closure}[1]{\ensuremath{\mathrm{cl}}(#1)}
\newcommand{\interior}[1]{\ensuremath{\mathrm{int}}(#1)}
\def \Def {\operatorname{Def}}
\def \R{\mathcal{R}}
\def\Ind#1#2{#1\setbox0=\hbox{$#1x$}\kern\wd0\hbox to 0pt{\hss$#1\mid$\hss}
\lower.9\ht0\hbox to 0pt{\hss$#1\smile$\hss}\kern\wd0}
\def\Notind#1#2{#1\setbox0=\hbox{$#1x$}\kern\wd0\hbox to 0pt{\mathchardef
\nn=12854\hss$#1\nn$\kern1.4\wd0\hss}\hbox to
0pt{\hss$#1\mid$\hss}\lower.9\ht0 \hbox to
0pt{\hss$#1\smile$\hss}\kern\wd0}
\def\dind{\mathop{\mathpalette\Ind{}}^{\text{d}} }
\def\ndind{\mathop{\mathpalette\Notind{}}^{\text{d}} }
\def \rt{R\langle t \rangle}
\def \ra{R\langle a \rangle}
\def \r0t{R_0\langle t \rangle}
\newcommand{\domain}[1]{\ensuremath{\mathrm{dom}}(#1)}
\newcommand{\ma}{\mathfrak{m}}
\newcommand{\type}{\ensuremath{\mathrm{tp}}}
\newcommand{\tp}{\ensuremath{\mathrm{tp}}}
\newcommand{\dcl}{\ensuremath{\mathrm{dcl}}}
\newcommand{\bk}{\mathbf{k}}
\begin{document}

\title{Model completeness of o-minimal fields with convex valuations}
\author{Clifton F. Ealy and Jana Ma\v{r}\'ikov\'{a}\footnote{During the work on this paper, the second author was partially supported by the Fields Institute.} \\Dept. of Mathematics, WIU}
\maketitle

\begin{abstract}
We let $R$ be an o-minimal expansion of a field, $V$ a convex subring, and $(R_0 , V_0 )$ an elementary substructure of $(R,V)$.  
Our main result is that $(R,V)$ considered as a structure in a language containing constants for all elements of $R_0$ is model complete relative to quantifier elimination in $R$, provided that $\bk_R$ (the residue field with structure induced from $R$) is o-minimal.
Along the way we show that o-minimality of $\bk_R$ implies that the sets definable in $\bk_R$ are the same as the sets definable in $\bk$ with structure induced from $(R,V)$.  We also give a criterion for a superstructure of $(R,V)$ being an elementary extension of $(R,V)$.

\end{abstract}

\begin{section}{Introduction}
Throughout, we let $R$ be an o-minmial field (i.e. an o-minimal expansion of a real closed field), $V$ a convex subring (hence a valuation ring) with unique maximal ideal $\ma$, and $\pi \colon V\to \bk$ the corresponding residue map with (ordered) residue field $\bk = V/\ma$.  For any $X\subseteq R^n$ we let $\pi (X):=\pi (X\cap V^n)$.  {\em Definable in a structure $M$\/} (or $M$-definable) shall mean definable in $M$ with parameters from $M$, unless indicated otherwise.  By $\bk_R$ we denote the expansion of $\bk$ by all sets $\pi X$, where $X\subseteq R^n$ is $R$-definable.  Similarly, $\bk_{(R,V)}$ is the expansion of $\bk$ by all sets $\pi X$ where $X\subseteq R^n$ is $(R,V)$-definable.  By $(\mathcal{R}, \mathcal{V})$ we shall always denote a big elementary extension of $(R,V)$, and by $(R_0 , V_0 )$ we denote an elementary substructure of $(R,V)$.

O-minimal fields with convex subrings (thus o-minimal fields with valuations) have been extremely useful in proving facts about the reals - see for example Br\"{o}cker \cite{b1}, \cite{b2} for results of real algebraic and semialgebraic character, van den Dries \cite{hlimit} for results on Hausdorff limits, or Wilkie's famous proof of the o-minimality of the real exponential field \cite{wilkie}.

The structure $(R,V)$ is well-understood when $R$ is a pure real closed field (see Cherlin and Dickmann, \cite{chd}).  In particular, in this case $(R,V)$ eliminates quantifiers.  With the emergence of numerous o-minimal expansions of real closed fields, the need arose to consider a more general class of structures $(R,V)$.  In \cite{tconv}, van den Dries and Lewenberg identified $T$-convex subrings of o-minimal fields as a good analogue of convex subrings of real closed fields (see also van den Dries \cite{tconvex}).  If $V$ is $T$-convex, then $(R,V)$ has elimination of quantifiers (relative to $R$), the structure $\bk_{R}$ is o-minimal, and $\bk_R$ defines the same sets as the structure $\bk_{(R,V)}$.  However, certain cases of interest do not fall into the T-convex category. For instance, the convex hull of $\mathbb{Q}$ in $R$ is not always a $T$-convex subring of $R$.  Also, many of the nice properties of $T$-convex subrings are only implications, not equivalences.  For example, $T$-convexity of $V$ implies o-minimality of $\bk_R$, but not vice versa.  

We therefore believe that it is a useful undertaking to try to understand a wider class of structures $(R,V)$.  In this paper we continue the investigation of the class of structures $(R,V)$ so that $\bk_R$ is o-minimal (see Ma\v{r}\'{i}kov\'{a}  \cite{thesispaper}, \cite{ax} and van den Dries, Ma\v{r}\'{i}kov\'{a} \cite{jalou}).  We point out that these structures include all cases when $V$ is the convex hull of $\mathbb{Q}$ in $R$ (see Corollary \ref{convhullofq}).

We now state some results and definitions we are going to use.  The theorem below is Theorem 1.2 in \cite{ax}.
\begin{theorem}
The structure $\bk_R$ is o-minimal iff for every $R$-definable function $f\colon [0,1]\to [0,1]$ there is $\epsilon_0 \in \ma^{\geq 0}$ so that 
$\pi f(\epsilon_0 ) = \pi f(\epsilon )$
for all $\epsilon \in \ma^{\geq \epsilon_0}$.
\end{theorem}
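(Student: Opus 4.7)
My plan is to prove both directions by exploiting cell decomposition in $R$ and in $\bk_R$.

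For the implication ``$\bk_R$ o-minimal $\Rightarrow$ eventual constancy'', I argue by contradiction. Given $f\colon[0,1]\to[0,1]$ that is $R$-definable, the monotonicity theorem in $R$ yields a partition of $[0,1]$ into finitely many cells on which $f$ is continuous and monotonic. Since $\ma^{\geq 0}$ is downward-closed in $R^{\geq 0}$ and the cell boundaries are finitely many, I can pick some $b\in\ma^{\geq 0}$ such that on $(b,c)$ for an appropriate $c\notin\ma$, $f$ is continuous and, say, weakly increasing; hence $\pi f$ is weakly increasing on $\ma^{\geq b}$. The set $S=\{\pi f(\epsilon):\epsilon\in\ma^{\geq b}\}\subseteq\bk$ is $\bk_R$-definable, namely as the fiber over $0\in\bk$ of the $\bk_R$-definable ``residue graph'' $\pi(\{(x,f(x)):x\in V\cap[b,c]\})$. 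Failure of eventual constancy is equivalent to $S$ having no maximum, so that $y^\ast:=\sup S\in\bk$ is $\bk_R$-definable (by o-minimality) but $y^\ast\notin S$. To derive a contradiction, I would look at the $\bk_R$-definable ``induced function'' $\bar f$ defined on the cofinite set of $x\in\bk$ where the fiber of the residue graph over $x$ is a singleton, and analyze its one-sided limit as $x\to 0^+$: weak monotonicity of $f$ descends to $\bar f$, o-minimality of $\bk_R$ yields a limit $\ell\in\bk$, and one argues $\ell=y^\ast$ and that $\ell$ is attained on $\ma^{\geq b}$, contradicting $y^\ast\notin S$.

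For the implication ``eventual constancy $\Rightarrow$ $\bk_R$ o-minimal'', I would take a $\bk_R$-definable $A\subseteq\bk$ and show it is a finite union of points and intervals. First, reduce to the case $A=\pi X$ for some $R$-definable $X\subseteq R^n$, by an induction on formula complexity using the primitive form of $\bk_R$-definable sets. Then apply cell decomposition in the o-minimal $R$: $X$ is a finite union of cells whose boundary functions are $R$-definable, and for each such boundary function $g$ the hypothesis supplies some $\epsilon_0\in\ma^{\geq 0}$ on which $\pi g$ is eventually constant. Propagating this coordinate-by-coordinate shows each cell projects under $\pi$ to a finite union of points and intervals in $\bk$, and taking the union over cells gives the desired description of $A$.

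I expect the forward direction to be the main obstacle: o-minimality of $\bk_R$ does not by itself forbid bounded definable sets from lacking a maximum (half-open intervals are perfectly o-minimal), so one cannot immediately conclude that $S$ has a maximum from o-minimality alone. The crux is to exploit the specific rigidity of $S$ as the $\pi$-image of a monotonic $R$-definable function, most likely by carefully comparing the generic ``standard'' fibers of the residue graph (for $x\in\bk^{>0}$) with the degenerate fiber over $x=0$ and using continuity properties of $R$-definable functions. The reverse direction is cleaner in principle but relies on a quantifier-level reduction that must be set up carefully.
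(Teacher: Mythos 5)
This statement is not proved in the paper at all: it is quoted as Theorem 1.2 of \cite{ax} (with the right-to-left direction resting on \cite{thesispaper}), so there is no in-paper proof to compare against. Judged on its own terms, your proposal contains a genuine gap in each direction.

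In the forward direction, your reduction to a weakly monotone continuous $f$ on $(b,c)$ with $b\in\ma$ and $c>\ma$, and your identification of $S=\pi f(\ma^{>b})$ as the fiber of $\pi(\Gamma f)$ over $0$, are both fine, and you correctly observe that o-minimality of $\bk_R$ only yields $y^\ast=\sup S\in\bk$, not $y^\ast\in S$. But the route you then propose --- the induced function $\bar f$ on the singleton-fiber locus and its one-sided limit $\ell$ at $0^+$ --- cannot close this gap: one can indeed check that $\ell=y^\ast$ and that $\bar f(x)\geq y^\ast$ for small $x>0$, but the configuration ``$S=[u,y^\ast)$ while $\bar f$ decreases to $y^\ast$ from above'' produces no wild definable set and is not locally contradictory; nothing in the limit analysis forces the value $y^\ast$ to be \emph{attained} on $\ma$, which is exactly the statement to be proved. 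The missing ingredient is a fact about $R$ and $\pi$ rather than about $\bk_R$: the residue of a closed bounded $R$-definable set is closed in $\bk^n$ (lift a point $y$ of the closure of $\pi X$ to $y'\in V^n$; by definable compactness $\text{d}(y',\cdot)$ attains its infimum at some $x_0\in X$, and $\pi \text{d}(y',x_0)<\delta$ for every $\delta\in\bk^{>0}$ forces $\text{d}(y',x_0)\in\ma$, i.e.\ $y=\pi x_0$). Applying this to $\closure{\Gamma f}\subseteq[0,1]^2$ and taking the fiber over $0$ shows that $S\cup\{\pi f(b^+)\}$ is closed in $\bk$; since $\pi f(b^+)\leq\inf S$, a closed set containing the convex set $S$ must contain $\sup S$ once o-minimality of $\bk_R$ places $\sup S$ in $\bk$, and attainment follows. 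Without this (or some equivalent) input, your argument stalls precisely where you admit it does.

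In the reverse direction the gap is more structural. The opening step, ``reduce to the case $A=\pi X$ for some $R$-definable $X$ by induction on formula complexity,'' is itself the hard part of this implication and does not follow from a naive induction: the family of sets $\pi X$ is closed under finite unions and coordinate projections, but not in any evident way under complementation (the complement of $\pi X$ in $\bk^n$ is not visibly a residue of an $R$-definable set), nor under taking fibers at points of $\bk$ --- for $a\in\bk$ one has $(\pi X)_a=\bigcup\,\{\pi(X_{a'}):\pi a'=a\}$, a union over an entire coset of $\ma$, and collapsing it to a single $\pi(X_{a_0})$ is exactly what the conditions $\Sigma(n)$ are for, not merely $\Sigma(1)$ applied to boundary functions of cells. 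In \cite{thesispaper} this reduction is the content of a quantifier-elimination theorem for the residue field (Theorem 2.22 there), whose proof occupies most of that paper; in the present paper the analogous closure properties (Lemma \ref{structure}) are obtained only via the Baisalov--Poizat trace machinery. By contrast, the step you flag as delicate --- that $\pi$ of a single $R$-interval is a point or an interval of $\bk$ --- is immediate and needs no hypothesis. So your assessment of where the difficulty lies is inverted in this direction, and the proposed induction fails at negation and at fibers.
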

We also refer to the conditions on the right-hand side of the above equivalence as $\Sigma (1)$.  These conditions have higher-dimensional analogues:
For $X\subseteq R^{m+n}$ and $a\in R^m$ let $$X(a) = \{ x\in R^n \colon \; (a,x)\in X  \}.$$  We say that $(R,V)\models \Sigma (n)$ if whenever $X\subseteq R^{1+n}$ is $R$-definable, then there is $\epsilon_0 \in \ma^{\geq 0}$ so that $\pi X(\epsilon_0 ) = \pi X(\epsilon )$ for all $\epsilon \in \ma^{\geq \epsilon_0 }$.  For the theorem below see \cite{ax} (Theorem 1.2) and \cite{jalou} (Theorem 1.2).

\begin{theorem} \label{Sigma_1} The following conditions are equivalent.
\begin{trivlist}
 \item[$a)$]
$(R,V)\models \Sigma (1)$ 
\item[$b)$]
$(R,V)\models \Sigma (n)$ for all $n$.
\item[$c)$]
For every closed $Y\subseteq \bk^{n}$ definable in $\bk_{R}$ there is $X\subseteq R^n$ definable in $R$ such that $\pi X =Y$.
\end{trivlist}
\end{theorem}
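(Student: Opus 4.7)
The plan is to prove the chain of implications (b) $\Rightarrow$ (a) $\Rightarrow$ (c) $\Rightarrow$ (b). The first implication is immediate, since $\Sigma(1)$ is just $\Sigma(n)$ specialized to $n=1$, so the content lies in the other two.

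For (a) $\Rightarrow$ (c), I would first invoke the preceding theorem to note that (a) is equivalent to o-minimality of $\bk_R$, so $\bk_R$ admits cell decomposition. Given a closed $Y\subseteq \bk^n$ definable in $\bk_R$, I would decompose it into finitely many cells, where the defining continuous functions are, by construction of $\bk_R$, of the form $\pi g$ for $R$-definable $g$. The strategy is to lift each such function to an $R$-definable function over $V^{n-1}$, assemble the corresponding $R$-definable cells, and take their union; using closedness of $Y$ to include limit cells along the boundary, the image under $\pi$ recovers $Y$. I would carry this out by induction on $n$, with the inductive hypothesis ensuring the base cells in $\bk^{n-1}$ can already be realized as $\pi X'$.

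For (c) $\Rightarrow$ (b), I would proceed by induction on $n$, the base case $n=1$ being (a). Given $R$-definable $X\subseteq R^{1+n}$, I first apply cell decomposition in $R$ to the whole family to obtain an $R$-definable partition so that on each piece the cells $X(\epsilon)$ are given uniformly by a finite list of continuous $R$-definable functions $f_i(\epsilon,\cdot)$. Let $Y\subseteq \bk^n$ be the closure (in $\bk_R$) of the set of points which belong to $\pi X(\epsilon)$ for arbitrarily small $\epsilon\in\ma^{>0}$. Then $Y$ is closed and $\bk_R$-definable, so by (c) there is an $R$-definable $Z\subseteq R^n$ with $\pi Z=Y$. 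Applying $\Sigma(1)$ to each of the $R$-definable slice functions $\epsilon\mapsto f_i(\epsilon,a)$ for $a$ ranging over the cells given by the inductive hypothesis in the $n-1$ remaining variables, and using an appropriate uniformity/compactness argument inside the saturated extension $(\mathcal{R},\mathcal{V})$, I would produce a single $\epsilon_0\in\ma^{\geq 0}$ such that $\pi X(\epsilon)=Y=\pi Z$ for every $\epsilon\in\ma^{\geq\epsilon_0}$.

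The main obstacle is the inductive step in (c) $\Rightarrow$ (b): $\Sigma(1)$ provides stabilization for each individual one-parameter family, but one must upgrade this to \emph{uniform} stabilization across the $n-1$ ambient parameters. The natural way around this is to exploit saturation of $(\mathcal{R},\mathcal{V})$ together with the fact that the set of ``bad'' $\epsilon$'s, where $\pi X(\epsilon)$ has not yet settled, is $(R,V)$-definable; condition (c) applied to the limit set $Y$ gives an $R$-definable target $Z$, and one then checks that a single choice of $\epsilon_0$ works by a type-counting argument based on the finitely many cell-types produced by $R$-definable cell decomposition of $X$.
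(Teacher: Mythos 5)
The paper does not actually prove this theorem: it is quoted from \cite{ax} (Theorem 1.2) and \cite{jalou} (Theorem 1.2), where the arguments occupy the better part of two papers (in particular, the implication toward (c) rests on a triangulation theorem for $\bk_R$ compatible with the residue map). So there is no in-paper proof to match your sketch against; judged on its own, the sketch has the right architecture, namely (b) $\Rightarrow$ (a) $\Rightarrow$ (c) $\Rightarrow$ (b) with (b) $\Rightarrow$ (a) immediate, but both nontrivial implications contain genuine gaps at exactly the points where the real work lies.

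In (a) $\Rightarrow$ (c) you assert that after cell-decomposing $Y$ in $\bk_R$ the wall functions are ``by construction of $\bk_R$'' of the form $\pi g$ for $R$-definable $g$. This is not by construction: $\bk_R$ is generated by the sets $\pi X$ under boolean combinations, projections and parameters, so a continuous $\bk_R$-definable function produced by cell decomposition has no a priori reason to have a graph of the form $\pi(\Gamma g)$. Producing such lifts is essentially the statement (c) itself (applied to graphs), so as written the argument is circular; this is precisely what the triangulation machinery of \cite{jalou} is built to supply. In (c) $\Rightarrow$ (b), the step you yourself identify as the main obstacle --- upgrading the slice-by-slice stabilization given by $\Sigma(1)$ to uniform stabilization of the family $\pi X(\epsilon)$ --- is left as ``an appropriate uniformity/compactness argument'' and ``a type-counting argument,'' i.e.\ it is not carried out. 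Moreover the target identity $\pi X(\epsilon)=Y$ for all small $\epsilon$ presupposes both that $\pi X(\epsilon)$ is closed and that the limit set is actually attained, neither of which is free. Your observation that the set of bad $\epsilon$ is $(R,V)$-definable (hence, by weak o-minimality of $(R,V)$, a finite union of convex sets) is a legitimate ingredient, but it only converts ``the bad set is not cofinal in $\ma$'' into ``all sufficiently large $\epsilon\in\ma$ are good''; establishing non-cofinality is the substance, and it is missing.
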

(Note that there is no ambiguity when writing $\Sigma (1)$, since the two versions are equivalent.)

\begin{remark}\label{condition3}
Note that the condtion in part c) of the above Theorem implies that
for every closed $Y\subseteq \bk^{n}$ which is $\emptyset$-definable in $\bk_{R_0}$  there is an $R_0$-definable $X\subseteq R^n$ such that $\pi X =Y$.
To see this, 
let $Y\subseteq \bk^n$ be closed and $\emptyset$-definable in $\bk_{R_0}$.  By part c) of Theorem \ref{Sigma_1}, $(R_0 , V_0 )\models Y=\pi_0 X$, where $\pi_0$ is residue map of $(R_0 , V_0 )$ and $X$ is an $R_0$-definable set.  Since $(R_0 , V_0 ) \preceq (R,V)$, we have $(R,V)\models Y=\pi X$.

\end{remark}

Here is the main result of this paper (see Corollary \ref{mc}):

\begin{theorem}\label{maintheorem}
Let $(R_0 , V_0 )$ be an elementary substructure of $(R,V)$, and let the language $\mathcal{L}_{R_0}$ consist of a language for $R$ in which $R$ has elimination of quantifiers, together with a predicate for $V$, and constants for all elements of $R_0$.  Then $(R,V)$ is model complete in the language $\mathcal{L}_{R_0}$ provided that $\bk_R$ is o-minimal.
\end{theorem}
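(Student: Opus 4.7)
The plan is to apply the Robinson test: show that any inclusion $(R_1, V_1) \subseteq (R_2, V_2)$ of models of $T := \mathrm{Th}_{\mathcal{L}_{R_0}}(R,V)$ is elementary. Quantifier elimination in $R$ yields $R_1 \preceq R_2$ as $R$-structures for free, so the real work lies in handling the predicate $V$. My route would go through the auxiliary ``residue-field criterion'' for elementary extension that the abstract promises: $(R_1, V_1) \preceq (R_2, V_2)$ should hold as soon as $R_1 \preceq R_2$ and the induced inclusion of residue fields $\bk_1 \hookrightarrow \bk_2$ (each carrying the structure induced from its ambient $(R_i, V_i)$) is elementary over the common subfield $\bk_0$.

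Granting such a criterion, only the residue-field elementarity remains to verify. For this I would invoke Theorem \ref{Sigma_1} together with the hypothesis that $\bk_R$ is o-minimal: the equivalence $\bk_R = \bk_{(R,V)}$ and the fact that every closed $\bk_R$-definable set is the $\pi$-image of an $R$-definable set mean that every $\bk_{R_0}$-definable condition on a tuple in $\bk_i$ unfolds (via Remark \ref{condition3} applied over $R_0$) into an $\mathcal{L}_{R_0}$-condition on any $V_i$-lift in $R_i$. Since $R_1 \preceq R_2$ preserves such conditions, and $V_1 = V_2 \cap R_1$ by the inclusion of $\mathcal{L}_{R_0}$-structures, the induced map on residue fields transports $\bk_{R_0}$-definable sets faithfully, giving $\bk_1 \preceq \bk_2$.

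The main obstacle is the nontrivial direction of the residue-field criterion itself. I would work inside a sufficiently saturated monster of $T$ and run a back-and-forth: starting from a tuple $a$ in $(R_1, V_1)$ and a tuple $b$ in $(R_2, V_2)$ with the same quantifier-free $R$-type over $R_0$ and whose residue data have the same type over $\bk_0$ in the induced residue-field structure, extend an $\mathcal{L}_{R_0}$-partial elementary map sending $a$ to $b$ one element at a time. The subtle case concerns elements $r \notin V$: such $r$ contribute no residue datum directly, but $1/r \in \ma$ does, and QE in $R$ controls everything else about $r$. Organising the back-and-forth so that ``bounded'' moves are dispatched by the residue-field hypothesis (using $\Sigma(n)$ to pull residue conditions back to $R$-conditions with an infinitesimal parameter) while ``unbounded'' moves are dispatched by QE in $R$ is precisely where o-minimality of $\bk_R$ earns its keep, and is where I expect the bulk of the technical work to sit.
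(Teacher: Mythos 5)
Your top-level reduction (Robinson's test, $R_1\preceq R_2$ for free from quantifier elimination in $\mathcal L_0$, all difficulty concentrated in the predicate $V$) matches the paper, and your verification of residue-field elementarity corresponds to the paper's Lemma \ref{elextres}. But the ``residue-field criterion'' that carries your whole argument is exactly the hard content of the paper, and the back-and-forth you sketch for it does not engage with either of the two real obstacles. First, even in the best case --- a single new element $a$ with $V'\cap \ra$ equal to the convex hull of $V$ --- it is not routine that $(R,V)\preceq(\ra,V')$: an $\mathcal L$-formula involves the predicate $V$ applied to arbitrary definable functions of the quantified variables, and eliminating $\exists$ over such formulas is done in the paper via the Baisalov--Poizat trace elimination (Theorem \ref{poizat}), which needs \emph{separated} pairs of parameter tuples; producing these is the point of Sections 2--3 (symmetry of dividing in Morley sequences of singletons, Corollary \ref{separation}, Lemmas \ref{elementary extension lemma} and \ref{elext}). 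Your plan to dispatch ``unbounded moves by QE in $R$'' silently assumes this step.

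Second, and more seriously, your criterion requires showing that the trace of $V'$ on $\ra$ is one of the two extreme possibilities (the convex hull of $V$, or everything below $R^{>V}$) and not something strictly in between, i.e.\ that there are no $R$-definable $f,g$ with $V<f(a),g(a)<R^{>V}$, $f(a)\in V'$ and $g(a)>V'$ (Theorem \ref{eext}). Residue-field elementarity does not hand you this: the residue field sees only $V/\ma$, while the obstruction lives in the cut at $V$ itself. The paper's Lemma \ref{vconv} converts the statement $f_r(\ma)\subseteq\ma$ into residue-field data, and this conversion is delicate: it needs stable embeddedness of $\bk$ (Corollary \ref{stabembstr}, itself resting on Theorem \ref{defsets}), the lifting of closed $\emptyset$-definable subsets of $\bk^n$ to $\mathcal L_0$-definable sets over $\emptyset$ (condition (3), via Remark \ref{condition3} --- this is where the constants for $R_0$ are genuinely used), a genericity argument on the parameter $d$, and definable choice to produce an $\emptyset$-definable function $g_e$ approximating $f_r$ to within $\ma$. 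Your phrase ``bounded moves are dispatched by the residue-field hypothesis'' is a placeholder for precisely this lemma. As written, the proposal identifies plausible intermediate statements but proves neither of the two that matter, so it has a genuine gap rather than being an alternative route.
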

The proof makes (somewhat surprisingly) use of abstractly model theoretic notions such as Morley sequences and dividing.
An essential ingredient is the notion of separation as introduced by Baisalov and Poizat in \cite{bp}:

\begin{definition}\label{separated}
Let $a \in R^m$, $b \in R^n$, and let $p$ be a complete one-type in $R$ over $A\subseteq R$.  Then $a$ and $b$ are said to be {\em separated in $p$\/} if either $$\dcl_{R}(a/A)\cap p(R) < \dcl_{R}(b/A)\cap p(R) \mbox{ or } \dcl_{R}(b/A)\cap p(R) < \dcl_{R}(a/A)\cap p(R).$$
We say that $a$ and $b$ are {\em $A$-separated\/} if they are separated in all one-types in $R$ over $A$.
\end{definition}
Baisalov and Poizat use separation to show that the structure $R$ expanded by all traces (i.e. all sets of the form $X\cap R^n$, where $X$ is definable in some $|R|^{+}$-saturated elementary extension of $R$) has elimination of quantifiers.  In particular, they prove the following (extracted from Theorem 3.6 in \cite{bp}):

\begin{theorem}\label{poizat}
Consider the structure $(\mathcal{R},R)$ in which the o-minimal language of $\mathcal{R}$ is expanded by a predicate for $R$. 
Let 
$\phi (x,y,z)$ be a formula in the language of $R$ (possibly with parameters from $R$) with 
 $x=(x_1 , \dots ,x_n )$, $y$ a singleton, and $z=(z_1 , \dots ,z_m )$.
Let $a\in \mathcal{R}^m$.  Then the quantifier $\exists y \in R$ in  
$$\exists y \in R (\phi (x,y,a) \cap R^{n+1} )$$ is eliminated by $$\big( \exists y (\phi (x,y,a_1 ) \wedge \phi (x,y,a_2 )) \big) \cap R^n ,$$
whenever $a_1 , a_2 \in \mathcal{R}^m$ have the same (o-minimal) type over $R$ as $a$, and $a_1$ and $a_2$ are $R$-separated.
\end{theorem}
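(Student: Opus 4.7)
The plan is to prove the equality of the two subsets of $R^n$ by establishing both inclusions. The inclusion $\subseteq$ is immediate from type-indiscernibility: if $x \in R^n$ and $y \in R$ satisfy $\phi(x, y, a)$, then since $x, y \in R$ and $\phi$ is in the language of $R$, the truth value of $\phi(x, y, z)$ in $\mathcal{R}$ depends on $z$ only through $\tp_{\mathcal{R}}(z/R)$; the hypothesis $a \equiv_R a_1 \equiv_R a_2$ therefore yields $\phi(x, y, a_1) \wedge \phi(x, y, a_2)$ for the same $y$.

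For the reverse inclusion $\supseteq$, fix $x \in R^n$ and $y^* \in \mathcal{R}$ witnessing $\phi(x, y^*, a_1) \wedge \phi(x, y^*, a_2)$. If $y^* \in R$, the same indiscernibility argument delivers $\phi(x, y^*, a)$ and we are done; otherwise $p := \tp_{\mathcal{R}}(y^*/R)$ is non-algebraic and $p(\mathcal{R})$ is a convex subset of $\mathcal{R}$ disjoint from $R$. By o-minimality applied uniformly to the $R$-definable family $\phi(x, y, z)$, finitely many $R$-definable continuous functions $g_1, \dots, g_k$ describe, in a fixed combinatorial pattern, the solution sets $E = \{y : \phi(x, y, a)\}$ and $E_i = \{y : \phi(x, y, a_i)\}$ when $z$ runs over a common cell containing $a, a_1, a_2$. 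Let $I_i = (c_i, d_i)$ be the component of $E_i$ that contains $y^*$, so $c_i, d_i \in \dcl_{\mathcal{R}}(a_i/R) \cup \{\pm\infty\}$.

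Suppose for contradiction that $E \cap R = \emptyset$. The matching combinatorics---each $g_j(a)$ has the same cut in $R$ as $g_j(a_i)$, since cuts depend only on $\tp_{\mathcal{R}}(a/R) = \tp_{\mathcal{R}}(a_i/R)$---also yield $E_i \cap R = \emptyset$. So $I_i \subseteq p(\mathcal{R})$, forcing $c_i, d_i \in p(\mathcal{R}) \cup \{\pm\infty\}$. Now invoke separation: WLOG $\dcl_{\mathcal{R}}(a_1/R) \cap p(\mathcal{R}) < \dcl_{\mathcal{R}}(a_2/R) \cap p(\mathcal{R})$, so comparing $d_1$ (above $y^*$ in $I_1$) with $c_2$ (below $y^*$ in $I_2$) yields $d_1 < c_2$ whenever both are finite, contradicting $c_2 \leq y^* \leq d_1$.

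The main obstacle is the degenerate case $d_1 = +\infty$ or $c_2 = -\infty$, in which separation cannot be applied directly to $d_1, c_2$. Here one exploits that $E_1$ and $E_2$ each have only finitely many components and that, by the combinatorial match, every endpoint of every component of $E$ must also lie outside $R$; this pushes every relevant endpoint into $p(\mathcal{R}) \cup \{\pm\infty\}$, and a pigeonhole argument produces a pair of finite endpoints---one in $\dcl_{\mathcal{R}}(a_1/R) \cap p(\mathcal{R})$, one in $\dcl_{\mathcal{R}}(a_2/R) \cap p(\mathcal{R})$---which straddle $y^*$ and contradict the separation inequality in the same way. A careful cell-decomposition of the $R$-definable family and bookkeeping of the boundary cases (such as $y^*$ coinciding with some $c_i$ or $d_i$) complete the argument.
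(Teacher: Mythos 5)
The paper offers no proof of this statement to compare against---it is quoted as extracted from Theorem 3.6 of Baisalov--Poizat---so your proposal can only be judged on its own terms. The inclusion $\subseteq$ and the main case of $\supseteq$ are sound: when $p=\tp(y^*/R)$ is a genuine cut in $R$ (both $\{r\in R: r<p(\mathcal R)\}$ and $\{r\in R: r>p(\mathcal R)\}$ nonempty), the endpoints of the components $I_1,I_2$ really are forced into $p(\mathcal R)$, since they cannot be $\pm\infty$ (both sides of the cut meet $R$) and cannot lie in $R$ (a cut has no extremal element on either side, so an endpoint in $R$ would put a point of $R$ inside $I_i$); separation then gives $c_2<y^*\leq d_1<c_2$, the desired contradiction.

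The gap is the ``degenerate case,'' and it is not a bookkeeping issue: it cannot be closed, because the statement as printed fails exactly there. When $p$ is a non-cut, the step ``$I_i\subseteq p(\mathcal R)$ forces $c_i,d_i\in p(\mathcal R)\cup\{\pm\infty\}$'' is false (for $p=r^{+}$ the left endpoint can be $r\in R$ itself), and when an endpoint is infinite or lies in $R$ it is the \emph{same} endpoint for both $I_1$ and $I_2$, so there is no second finite endpoint for a pigeonhole argument to produce: the two components simply do overlap. Concretely, take $\phi(x,y,z)$ to be $y>z$ and let $a_1,a_2$ be a Morley sequence over $R$ in the $\emptyset$-invariant type $\{x>r: r\in\mathcal R\}$; by Theorem \ref{symmetry} and Lemma \ref{separation for singletons} these are $R$-separated and realize the same type over $R$ as $a$, yet $\mathcal R\models\exists y\,(y>a_1\wedge y>a_2)$ while no $y\in R$ satisfies $y>a$. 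So the displayed elimination is only valid under an extra hypothesis---for instance that every element of $\dcl_{\mathcal R}(Ra_i)\setminus R$ realizes a cut in $R$, which is what holds in every application made in this paper, where the parameters come from Morley sequences in the cut determined by $V$. Restricted to that situation your argument is essentially complete; as a proof of the statement as written, it has an unfixable hole precisely where you deferred to ``pigeonhole and careful bookkeeping.''
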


The structure $(R,V)$ can be viewed as the o-minimal field $R$ expanded by the trace of a set $[-t,t]\subseteq \mathcal{R}$, where $V<t$ and $t<R^{>V}$.  But whereas in \cite{bp} the authors use saturation of $\mathcal{R}$ to obtain the existence of separated tuples, we prove the existence of separated tuples in Morley sequences in o-minimal invariant one-types.
This is done in Section 2.  The proof consists essentially of showing that dividing in a Morley sequence in an invariant one-type in an o-minimal theory is symmetric.

In Section \ref{section el ext}, we use the Tarski-Vaught criterion and separation to investigate elementary extensions of $(R,V)$ (without assuming o-minimality of $\bk_R$).  We let $t\in \mathcal{R}$ be such that $V<t<R^{>V}$ and we let $R\langle t \rangle$ be the (elementary) substructure of $\mathcal{R}$ generated by $t$ over $R$.
We show  that if $(R,V)\subseteq (R\langle t \rangle,W)$, then the only obstacle to $(R\langle t \rangle,W)$ being an elementary extension of $(R,V)$ is the existence of $R$-definable functions $f$, $g$ such that $V<f(t),g(t)<R^{>V}$ and $f(t)\in W$ and $g(t)>W$ (Theorem \ref{eext}).

In Section \ref{section traces}, we do assume that $\bk_{R}$ is o-minimal.   Using separation and results from \cite{thesispaper}, we show that the sets definable in $\bk_{R}$ are the same as the sets definable in $\bk_{(R,V)}$.  This implies, together with a result by Hasson and Onshuus in \cite{haon}, that the residue field is stably embedded in $(R,V)$ (see Corollary \ref{stabembstr} and the preceding definitions for precise statements).

To prove Theorem \ref{maintheorem}, we assume that $\bk_{R}$ is o-minimal, and we let $(R',V')$ be a superstructure of $(R,V)$ such that $(R,V) \equiv (R',V')$.  We consider $(R,V)$ as a structure in $\mathcal{L}_{R_0}$, a language in which $R$ eliminates quantifiers and which contains constants for all elements of $R_0$, where $(R_0 , V_0 ) $ is an elementary subtructure of $(R,V)$.  Proving that $(R,V)$ is an elementary substructure of $(R',V')$ then basically consists of showing that
 there are no $R$-definable functions $f$ and $g$ as in Theorem \ref{eext}, obstructing $(R,V)$  from being an elementary substructure of $(R',V')$ (see Lemma \ref{vconv}).

\bigskip\noindent
{\bf Notation and conventions. \/}
We denote the language of $R$ by $\mathcal{L}_0$ and we assume that $R$ has elimination of quantifiers in $\mathcal{L}_0$.  (Note that this can always be achieved by extending a language of $R$ by definitions.)
We form $\mathcal L$ by  adding to $\mathcal L_0$ a unary predicate, $V$, which we will interpret as the convex subring $V$ of $R$.  When we consider the $\mathcal L$-structure on $R$, we refer to the structure as $(R,V)$ and when we refer to its reduct to $\mathcal L_0$, we refer to the structure simply as $R$.
For any o-minimal field $R'$ with convex subring $V'$ and corresponding residue field $\bk'$ we denote by $\bk'_S$ the expansion of $\bk'$ by the residues of all $S$-definable sets, where $S$ is a reduct of $(R',V')$.



 All parameter sets and all models (monster models excepted) are assumed to be small subsets of an appropriate monster model.  By $k,l,m,n$ we denote non-negative integers.  

Let $M$ be an ordered structure.  
We denote by $\Def^n (M)$ the collection of all $M$-definable subsets of $M^n$ and by $\Def^n_{\emptyset} (M)$ the collection of those definable over the empty set.  If $\phi$ is a formula in the language of $M$, then we denote by $\phi (M)$ the realization of $\phi$ in the structure $M$.  Similarly, if $p$ is a type, then $p(M)$ is the realization of $p$ in $M$.
A function $f\colon X\to M$, where $X\subseteq M^n$, is definable if its graph $\Gamma f \subseteq M^{n+1}$ is.  For $1\leq m \leq n$ we denote by $p^{n}_{m}$ the coordinate projection $$M^n \to M^m : (x_1 , \dots ,x_n ) \mapsto (x_1 , \dots ,x_m).$$
For $X\subseteq M$ we write $a<X$ to mean $a<x$ for all $x\in X$, and similarly for $a>X$.
For $a,b \in M^n$ we denote by $\text{d}(a,b)$ the euclidean distance between $a$ and $b$, and if $X\subseteq M^n$ is non-empty and $M$ is o-minimal, then  we set
$$\text{d}(a,X)=\inf \{\text{d}(a,x):\; x\in X \} .$$
If $X \in \Def^n (M)$ and $a \in M^{m}$, where $1\leq m< n$, then $$X_a := \{  (x_1 , \dots , x_{n-m}):\; (a,x_1 , \dots ,x_{n-m}) \in X \}.$$  Similarly, if $f$ is an $M$-definable function in $n$ variables, $1\leq m<n$, and $a\in M^m$, then $f_a$ denotes the function defined by $f_a (x) = f(a,x)$ for all $x\in M^{n-m}$ so that $(a,x) \in \domain{f}$.

\end{section}

\begin{section}{Separation} 



As noted in the introduction, our valuation ring $V$ can be thought of as $R \cap [-t, t]$ where $t \in \mathcal{R}$ realizes the type over $R$ given by $$\{x>r|r\in V\} \cup \{x<r|r>V\}.$$   We wish to understand separated tuples of realizations of $\tp(t/R)$.

\begin{remark}
Suppose that
$M$ is an o-minimal structure, $A \subseteq M$, $p\in S_1 (A)$,
$a\in M$ and $b\in M^k$.  Then it is immediate from Definition \ref{separated} that if $\dcl (a/A) \cap p(M) \not=\emptyset$ and $a$
 is separated from $b$ in $p$, then $a$ and $b$ are $A$-separated.
Also, if $\dcl_A (b) \cap p (M)= \emptyset$, then $b$ is automatically separated in $p$ from every $c \in M^n$.
\end{remark}
Being separated is a notion closely related to that of non-dividing, and thus we will quickly summarize some basic facts about non-dividing.  Proofs may be found in \cite{adler} and elsewhere.

\begin{definition}
A formula $\phi(x, b)$ {\em divides over a set $C$\/} if there is a positive integer $k$ and a
sequence $(b_i)_{i<\omega}$ such that
$\type (b_i /C)=\type (b/C)$
holds for all $i < \omega$ and $\{\phi(x, b_i) | i < \omega\}$ is $k$-inconsistent.  A type divides over $C$ if it contains a formula which divides over $C$.  If $\tp(a/BC)$ does not divide over $C$, we write $a \dind_C B$.
\end{definition}
We will repeatedly use the following properties of nondividing (see \cite{adler} Lemma 5.2 for a proof):

\begin{itemize}
 \item (monotonicity) If $A\dind_C B$, $A_0 \subseteq A$ and $B_0 \subseteq B$, then $A_0\dind_C B_0$.

\item (base monotonicity)  Suppose $D \subseteq C \subseteq B$. If $A \dind_D B$, then $A \dind_C B$.

\item (transitivity) Suppose $D \subseteq C \subseteq B$. If $B \dind_C A$ and $C \dind_D A$, then $B \dind_D A$.
\end{itemize}
\begin{example}Suppose that $a$ and $b$ are tuples, with $f(b) \equiv_A g(a) \equiv_A h(b)$ and  $f(b)<g(a)<h(b)$. The formula $\phi(x,b)$ which says ``$g(x)$ lies in the interval $(f(b), h(b))$'' witnesses $a\ndind_A b$.  To see this, simply choose any element realizing $\tp(f(b)/A)$ which is greater than $h(b)$ and find an automorphism fixing $A$ mapping $f(b)$ to this element.  This will map $b$ to some $b_1$ and $h(b)$ to $h(b_1)$.  Repeat this process, at stage $k$ picking an element greater than $h(b_{k-1})$ and using it to find a suitable $b_k$.  The resulting collection $\{\phi(x, b_i) | i < \omega\}$ is inconsistent.  
\end{example}

\noindent It follows from the above example that if $a$ and $b$ are tuples that are not separated in some type over $A$, then $\tp(a/Ab)$ divides over $A$ or $\tp(b/Aa)$ divides over $A$.  Furthermore, for singletons $a,b$ being $A$-separated is equivalent to $a\dind_A b$ and $b \dind_A a$.

\smallskip
\noindent In fact, one can observe the following:

\begin{lemma}\label{separation for singletons}
 For a singleton $a$ and an $n$-tuple $b$, $a$ is $A$-separated from $b$ if and only if $a \dind_A b$.
\end{lemma}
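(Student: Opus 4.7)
Plan: The forward direction is essentially the symmetric contrapositive of the observation just before the lemma, so it needs little new work. If $a$ is $A$-separated from $b$, then for no 1-type $p\in S_1(A)$ can one find a sandwich $f(b)<g(a)<h(b)$ with the three values in $p(R)$ (because such a sandwich exhibits $\dcl(aA)\cap p(R)$ and $\dcl(bA)\cap p(R)$ as interleaving, hence as unseparated in $p$). By the example, the absence of every such sandwich rules out dividing of $\tp(a/Ab)$ over $A$, i.e.\ $a\dind_A b$.

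The substantive direction is $(\Leftarrow)$. I would prove it by contrapositive: assume $a$ is not $A$-separated from $b$, and derive $a\ndind_A b$. By the second sentence of the remark following Definition \ref{separated}, if either of the sets $\dcl(aA)\cap p(R)$, $\dcl(bA)\cap p(R)$ is empty then separation in $p$ holds vacuously, so both sets are nonempty and neither lies strictly below the other. This interleaving yields one of two sandwich configurations in $p(R)$:

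\textbf{Case 1.} Some $g(a)\in \dcl(aA)\cap p(R)$ lies strictly between two elements $f(b),h(b)\in \dcl(bA)\cap p(R)$. The example applies verbatim and gives $a\ndind_A b$.

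\textbf{Case 2.} Some $f(b)\in \dcl(bA)\cap p(R)$ lies strictly between two elements $g_1(a)<g_2(a)$ of $\dcl(aA)\cap p(R)$. The example read in the naive direction only yields $b\ndind_A a$, which is not what we need; here is where the hypothesis that $a$ is a singleton is essential. By o-minimal cell decomposition and the monotonicity theorem applied to the unary $A$-definable functions $g_1,g_2$, there is an $A$-definable open interval $U\ni a$ on which both $g_1,g_2$ are continuous and strictly monotone, and with a possible reflection and a shrinking of $U$ we may assume both are strictly increasing with $g_1<g_2$ throughout $U$ and that $f(b)$ lies in $g_1(U)\cap g_2(U)$ (which is arranged using the inequality $g_1(a)<f(b)<g_2(a)$). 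Letting $g_1^{-1},g_2^{-1}$ denote the $A$-definable inverse branches on these image intervals, set $G_1(b):=g_2^{-1}(f(b))$ and $G_2(b):=g_1^{-1}(f(b))$. Monotonicity then gives $G_1(b)<a<G_2(b)$. Moreover, by choosing an $A$-automorphism of $\mathcal{R}$ sending $g_1(a)\in p(R)$ to $f(b)\in p(R)$ and tracking $a$, one sees that $G_2(b)\equiv_A a$; a symmetric argument with $g_2$ gives $G_1(b)\equiv_A a$. Hence $G_1(b),a,G_2(b)$ all realize $q:=\tp(a/A)$, producing a Case 1 sandwich in $q(R)$ with $g$ the identity. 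The example now gives $a\ndind_A b$, contradicting the hypothesis.

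The two cases together complete the proof. The main obstacle is the passage from Case 2 to a sandwich of the form in Case 1: it requires the singleton hypothesis on $a$, so that $g_1,g_2$ are unary and o-minimal monotonicity supplies genuine inverses on a cell around $a$. Everything else (the existence of the branching inverses, the automorphism argument placing $G_1(b),G_2(b)$ in $q(R)$, and the application of the example) is routine once the geometry of the cell $U$ has been set up.
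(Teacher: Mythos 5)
Your proof is correct and follows essentially the same route as the paper: the interleaving configuration with $a$'s images on the outside is reduced to a directly-dividing sandwich by inverting the unary $A$-definable functions (exactly where the paper, too, uses that $a$ is a singleton), and your $G_1(b)=g_2^{-1}(f(b))<a<g_1^{-1}(f(b))=G_2(b)$ is the same pair of points the paper produces as $h^{-1}(g(b))$ and $f^{-1}(g(b))$. The one caveat is your justification of the forward direction: the example proves ``sandwich implies dividing,'' not its converse, so ``absence of every sandwich rules out dividing'' really rests on the separate (standard, and also left implicit in the paper) fact that a dividing one-variable formula over $Ab$ must trap $a$ between two $Ab$-definable points realizing $\tp(a/A)$.
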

\begin{proof}
 Clearly, $a \ndind_A b$ implies $a$ is not $A$-separated from  $b$. So we suppose that $a$ and $b$ are not $A$-separated, and we will show $a \ndind_A b$.     A priori this could happen in two ways:  For some $p \in S_1(A)$, 

$$f(a),g(b), h(a) \models p \mbox{ and } f(a)<g(b)< h(a),$$where $f$, $h$ are $A$-definable one-variable functions and $g$ is an $A$-definable $n$-variable function, or 
$$g(b), f(a), h(b) \models p \mbox{ and }
g(b)<f(a)<h(b),$$ where
$f$ is an $A$-definable one-variable function and $g$, $h$ are $A$-definable $n$-variable functions.

In the second case, it is clear that $a \ndind_A b$, so we may concentrate on the first case. In this case, note that $f \circ h^{-1}$ is an $A$-definable function mapping $h(a)$ to $f(a)$.  Thus one may assume it is strictly increasing everywhere (since $h(a)$ and $f(a)$ realize the same type over $A$, and $x\mapsto x$ is a $\emptyset$-definable strictly increasing function mapping the set of realizations of $\type (f(a)/A)$ onto itself).  Thus $f(h^{-1}((g(b)))$ is less than $f(a)$, and also realizes $p$.  Applying $f^{-1}$ to $f(h^{-1}((g(b))), f(a)$, and $g(b)$, we see that  $a\ndind_A b$.
\end{proof}
The construction that will lead to separated tuples is that of a Morley sequence.  We recall the following definition and fact (from e.g. \cite{nip2}, Definition 2.2 and subsequent discussion):

\begin{definition}
  Given an $A$-invariant type $p$ over $\mathcal R$, and $B\supseteq A$,  a { \em Morley sequence in $p$ over $B$\/} is any sequence $t_1 , t_2 , \dots $ constructed as follows: let $t_1\models p|_B$ and having defined $t_1, \dots , t_n$, let $t_{n+1}\models p|_{Bt_1 \dots t_n }$.  
\end{definition}
By a {\em finite Morley sequence\/} we shall mean an initial segment of a Morley sequence.
\begin{fact} \label{nondividing in Morley sequences} Let $p$ be an $A$-invariant type over $\mathcal{R}$, and let
 $(t_i)_{i<\omega}$ be a Morley sequence in $p$ over $B \supseteq A$.   Then $(t_i)_{i<\omega}$ is indiscernible and independent (i.e. for each $n$ one has $t_{n+1}\dind_{B} t_1 \dots t_n$).
\end{fact}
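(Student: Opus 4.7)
My plan is to derive both assertions of the fact from a single observation: since $p$ is $A$-invariant and $B\supseteq A$, $p$ is in fact $B$-invariant, in the sense that for every formula $\phi(x;y)$ with parameters from $B$ and every pair of tuples $c,c'$ from $\mathcal{R}$ with $c\equiv_B c'$, one has $\phi(x;c)\in p$ iff $\phi(x;c')\in p$. This passes through directly from the definition of $A$-invariance, because $c\equiv_B c'$ implies $c\equiv_A c'$.

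For indiscernibility over $B$, I would proceed by induction on $n$ and show that for every strictly increasing sequence of indices $i_1<\cdots<i_n$ the tuple $(t_{i_1},\dots,t_{i_n})$ has the same type over $B$ as $(t_1,\dots,t_n)$. The base case $n=1$ is immediate since each $t_i$ realizes $p|_B$. For the inductive step, the hypothesis yields an automorphism $\sigma$ fixing $B$ pointwise with $\sigma(t_{i_j})=t_j$ for $1\leq j\leq n$. By the construction of a Morley sequence, $t_{i_{n+1}}\models p|_{B t_{i_1}\cdots t_{i_n}}$; applying $\sigma$ and using $B$-invariance of $p$ yields $\sigma(t_{i_{n+1}})\models p|_{B t_1\cdots t_n}$, which is a complete type also realized by $t_{n+1}$. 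Composing $\sigma$ with an appropriate automorphism of the monster over $B t_1\cdots t_n$ produces the desired equality of types.

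For independence, fix $n$ and any formula $\phi(x;t_1\cdots t_n)\in\tp(t_{n+1}/B t_1\cdots t_n)$. Since $t_{n+1}\models p|_{B t_1\cdots t_n}$, this formula belongs to $p$. To show it does not divide over $B$, let $(c^{(i)})_{i<\omega}$ be any sequence with $c^{(i)}\equiv_B (t_1,\dots,t_n)$. By $B$-invariance of $p$, each $\phi(x;c^{(i)})$ lies in $p$, so any realization of $p$ in $\mathcal R$ satisfies all of them simultaneously; hence $\{\phi(x;c^{(i)}):i<\omega\}$ is consistent. As $\phi$ was arbitrary in $\tp(t_{n+1}/B t_1\cdots t_n)$, I conclude $t_{n+1}\dind_B t_1\cdots t_n$.

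I expect essentially no obstacle: both assertions reduce to formal consequences of the observation that $A$-invariance of $p$ lifts to $B$-invariance. The only point requiring a little care is the bookkeeping of automorphisms in the inductive step for indiscernibility, to verify that the complete type $p|_{B t_1\cdots t_n}$ is the image of $p|_{B t_{i_1}\cdots t_{i_n}}$ under $\sigma$; the nondividing argument is just the standard observation that realizations of invariant types do not divide over the base of invariance.
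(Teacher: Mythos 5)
The paper offers no proof of this Fact; it is quoted from Hrushovski--Pillay \cite{nip2}, so there is nothing in the text to compare your argument against. Your proof is the standard one and it is correct: $A$-invariance passes to $B$-invariance for $B\supseteq A$; the induction for indiscernibility correctly uses that an automorphism $\sigma$ of the monster fixing $B$ pointwise satisfies $\sigma(p)=p$ and hence carries $p|_{Bt_{i_1}\cdots t_{i_n}}$ to $p|_{Bt_1\cdots t_n}$, after which composing with an automorphism over $Bt_1\cdots t_n$ finishes the step; and for nondividing you correctly observe that every $B$-conjugate of a formula of $p$ with parameters in $Bt_1\cdots t_n$ again belongs to $p$, so any family of such conjugates is simultaneously satisfiable and in particular not $k$-inconsistent for any $k$. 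One cosmetic slip: a realization of the global type $p$ lives in an elementary extension of $\mathcal{R}$, not in $\mathcal{R}$ itself; but consistency of $\{\phi(x;c^{(i)}):i<\omega\}$ already follows from its being a subset of the consistent type $p$, so nothing is lost.
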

Let $p$ be the
$V$-invariant type over $\mathcal R$ given by 
$$\{x>r:\; \exists v \in V (r<v) \} \cup \{x< r:\;  r>V\}.$$  
Let $t_1, \dots, t_n$ realize a finite Morley sequence in $p$ over $R$. 
For $1\leq k <n$ we
 shall show that $t_1 , \dots ,t_k $ is $R$-separated from $t_{k+1}, \dots ,t_n$ by proving the stronger statement that while symmetry of nondividing fails in general, it does not fail in Morley sequences of singletons:


\begin{theorem} \label{symmetry}
 Let $p \in S_1 (\mathcal{R})$ be $A$-invariant, and let $t_1 , \dots ,t_n$ be a finite  Morley sequence in $p$ over $A$.  Then  $$t_1  \dots t_{k} \dind_A t_{k+1}\dots t_n \mbox{ and } t_{k+1} \dots t_n \dind_A t_1 \dots t_k $$ for all $k$ with $1\leq k<n$.
\end{theorem}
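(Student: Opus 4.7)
The theorem splits into two nondividing claims. The first, $t_{k+1}\dots t_n \dind_A t_1\dots t_k$, follows directly from $A$-invariance of $p$: the tuple $(t_{k+1},\dots,t_n)$ is itself a Morley sequence in $p$ over $A t_1\dots t_k$, so its type over $A t_1\dots t_k$ is the restriction of an $A$-invariant complete type on $\mathcal R$ (built iteratively from $p$), and realizations of restrictions of $A$-invariant types never divide over the base $A$ of invariance.

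For the converse $t_1\dots t_k \dind_A t_{k+1}\dots t_n$ I would induct on $n$. The base case $n=2$ (so $k=1$) is a short application of the tools of the previous discussion: from $t_2 \dind_A t_1$ (given by the easy direction above), Lemma \ref{separation for singletons} applied to the singleton $t_2$ shows $t_2$ is $A$-separated from $t_1$; the symmetric shape of Definition \ref{separated} gives that $t_1$ is $A$-separated from $t_2$; and Lemma \ref{separation for singletons} applied to the singleton $t_1$ yields $t_1 \dind_A t_2$. For the inductive step with $2\leq k\leq n-1$ I would use the transitivity of non-dividing stated in the paper, taking intermediate base $A t_1$: the target $(t_1,\dots,t_k) \dind_A (t_{k+1},\dots,t_n)$ follows from the two inductive instances $t_1 \dind_A (t_{k+1},\dots,t_n)$ (applied to the length-$(n-k+1)$ Morley sequence $(t_1,t_{k+1},\dots,t_n)$ over $A$) and $(t_2,\dots,t_k) \dind_{A t_1} (t_{k+1},\dots,t_n)$ (applied to the length-$(n-1)$ Morley sequence $(t_2,\dots,t_n)$ over $A t_1$).

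This reduction isolates the genuinely hard case, which is $k = 1$ for each $n \geq 3$: showing $t_1 \dind_A (t_2,\dots,t_n)$ directly. By Lemma \ref{separation for singletons} the task is to establish that for every $q \in S_1(A)$ the sets $\dcl(t_1/A)\cap q$ and $\dcl(t_2,\dots,t_n/A)\cap q$ are linearly separated. The plan here is to use that $(t_2,\dots,t_n)$ is a Morley sequence in $p$ over $A t_1$, so realizes an $A t_1$-invariant type, and the pushforward by any $A$-definable $g$ with $g(t_2,\dots,t_n)\in q$ is again an $A$-invariant $1$-type on $\mathcal R$ extending $q$. The Morley genericity of $t_2$ over $A t_1$, inherited by $t_3$ over $A t_1 t_2$ and so on, then forces this pushforward to lie on one definite side of every $f(t_1)\in q$; the fact that the \emph{same} side is taken for all such $g$ — the geometrically substantive point, where o-minimality is genuinely used — is forced by $A$-indiscernibility of the Morley sequence together with the very restricted shape of $A$-invariant $1$-type extensions in o-minimal theories. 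Separation then produces the required non-dividing via Lemma \ref{separation for singletons}.
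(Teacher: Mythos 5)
Your reduction is sound and matches the paper's: the direction $t_{k+1}\dots t_n \dind_A t_1\dots t_k$ is the easy one from invariance, the passage from general $k$ to $k=1$ via base monotonicity and transitivity over $At_1$ is exactly the ``(and induction)'' step of the paper, and the base case $n=2$ via Lemma \ref{separation for singletons} and the symmetry of Definition \ref{separated} is fine. The problem is that the case you correctly isolate as ``the genuinely hard case'' --- $t_1 \dind_A (t_2,\dots,t_n)$ for $n\geq 3$ --- is not actually proved. Your final paragraph reformulates the goal (separation of $\dcl(t_1/A)\cap q$ from $\dcl(t_2\dots t_n/A)\cap q$ for each $q\in S_1(A)$, which is the right target by Lemma \ref{separation for singletons}) and then asserts that the conclusion ``is forced by $A$-indiscernibility of the Morley sequence together with the very restricted shape of $A$-invariant $1$-type extensions.'' That sentence is a restatement of the theorem, not an argument; note that the paper's own remark following the theorem statement (the Morley sequence in $\tp(ts/\mathcal R)$) exhibits an indiscernible, independent Morley sequence in an invariant type where precisely this direction of dividing fails, so indiscernibility plus invariance cannot by themselves force the conclusion --- something must exploit that $p$ is a $1$-type beyond merely quoting Lemma \ref{separation for singletons}.

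Concretely, what is missing is the paper's core argument: assume toward a contradiction that some $A$-definable $f$ satisfies $f(t_n,\underline t)>t_1$ with $f(t_n,\underline t)\models p|_A$ (where $\underline t = t_2\dots t_{n-1}$); split into cases according to whether $p|_A$ is a cut, a non-cut at a point of $A$, or the type at infinity; in the cut case, pass to a Morley sequence $t_1',\dots,t_n'$ in $p$ over the monster $\mathcal R$ itself, use the inductive hypothesis and $t_n'\dind_A t_1'\underline t'$ to show that $f_{\underline t'}$ must be strictly decreasing and must map a cofinal segment of the convex hull of $A^-$ onto a coinitial segment of $\mathcal R^{>A^-}$, and then derive a contradiction because the cofinality of that hull is $|A|$ while the coinitiality of $\mathcal R^{>A^-}$ exceeds $|A|$ by saturation. (The non-cut at infinity needs its own monotonicity argument when $\mathcal R$ is not assumed to be a field.) None of this cofinality-versus-coinitiality mechanism, nor the cut/non-cut case analysis, appears in your sketch, so the proof has a genuine gap exactly at its essential step.
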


\begin{remark} The proof of Theorem \ref{symmetry} does not use the assumption that $\mathcal{R}$ expands a field.
\end{remark}

\begin{remark}
 It may be that Theorem \ref{symmetry} is implicit in work of Chernikov, Kaplan, and Usvyatsov (\cite{ntp2}, \cite{strict}).  Certainly none of these authors would be surprised by this result.   However, it seems simpler to present a proof here rather than to attempt to extract a proof from their more general theorems.
\end{remark}

\begin{remark}  The assumption in Theorem \ref{symmetry} that $p$ is a 1-type is necessary. Let $t \models p$, where $p\in S_1 (\mathcal{R})$ is the type
$$\{ x>r \colon  \exists v \in V \,(r<v)\}  \cup \{ x<r \colon r \in \mathcal{R}^{>V} \},$$
and let $s\models q$, where $q\in S_1 (\mathcal{R} )$ is the type determined by
$$\{x> r \colon r\in \mathcal{R} \textrm{ and } r< R^{>V}\} \cup \{x< r \colon r \in R^{>V}\}.$$  Note that $\tp(ts/\mathcal{R})$ is invariant over $R$.  Consider a Morley sequence, $t_1s_1, t_2s_2, \dots$, in $\tp(ts/\mathcal R)$ over $R$. Dividing in this Morley sequence is not symmetric: the interval $(t_2, s_2)$ contains $(t_1, s_1)$, so $t_1 s_1\ndind_R t_2 s_2$, while $t_2 s_2 \dind_R t_1 s_1$ by Fact \ref{nondividing in Morley sequences} and induction.   We thank Hans Adler for pointing out this example.
\end{remark}

\begin{proof of theorem}
We may assume that $A = \dcl(A)$.  It follows that one-types over $A$ are just the order-types over $A$.
Let $1\leq k <n$.
 It is clear from the above properties of Morley sequences and nondividing that $ t_{k+1} \dots t_{n} \dind_A t_{1}\dots t_{k}$.  It remains to show that $t_{1}\dots t_{k} \dind_A t_{k+1}\dots t_{n}$, which in turn will follow from $t_1 \dind_A t_2 \dots t_n$ (and induction).  

We shall first consider the case when {\em $p|_A$ is a cut\/}.   In this case,  there are $A^{-}, A^{+}\subseteq A$, both nonempty, and so that $A^{-}\cap A^{+}=\emptyset$, $A^{-}<A^{+}$ and $A^{-}\cup A^{+}=A$, and $p$ is either the type determined by
\begin{equation}\label{left}
\{ x>A^{-}\} \cup \{ x<\mathcal{R}^{>A^{-}}  \} 
\end{equation}
(note that because $p|_{A}$ is a cut, $A^{-}$ has no maximal element),
or the type defined as above except with the inequalities reversed and $A^{-}$ replaced by $A^{+}$.  Either way the proof is identical, so let $p$ be as in (\ref{left}).  
Note that this is the case of central interest to us, for $A^{-}=V \cup R^{<V}$ and $A=R$.  We handle it by proving the following, \textit{a priori} stronger, claim.
\begin{claim}
Let $q$ be the type determined by $\{ x>\mathcal{R}^{<A^{+}}  \}  \cup \{ x< A^{+}  \}$.  Then
$t_1 \models q|_{A t_2 \dots t_n}$.
\end{claim}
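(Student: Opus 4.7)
The plan is to exploit o-minimality of $\mathcal{R}$: every complete $1$-type over a parameter set $B$ is determined by the Dedekind cut of $B$ it defines. So to prove $t_1 \models q|_{At_2 \dots t_n}$, it suffices to verify that $t_1$ and $q$ define the same cut in $A \cup \{t_2, \dots, t_n\}$.

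First I would compute the cut defined by $q|_{At_2 \dots t_n}$ parameter by parameter. Recall $q$ contains $x > b$ for $b \in \mathcal{R}^{<A^+}$ and $x < b$ otherwise. For $a \in A = A^- \cup A^+$ this gives $x > a$ when $a \in A^-$ and $x < a$ when $a \in A^+$. For $b = t_i$ with $i \geq 2$, since $t_i \models p$ and $A^+ \subseteq \mathcal{R}^{>A^-}$, the type $p$ already forces $t_i < A^+$, so $t_i \in \mathcal{R}^{<A^+}$ and $q$ contains $x > t_i$. Thus the cut defined by $q|_{At_2 \dots t_n}$ is: above $A^- \cup \{t_2, \dots, t_n\}$ and below $A^+$.

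Then I would verify that $t_1$ realizes this cut. From $t_1 \models p|_A$ we immediately have $t_1 > A^-$, and using again that $A^+ \subseteq \mathcal{R}^{>A^-}$ combined with the clause $x < \mathcal{R}^{>A^-}$ in $p$ yields $t_1 < A^+$. For each $i \geq 2$, the Morley sequence construction gives $t_i \models p|_{At_1 \dots t_{i-1}}$; since $t_1 > A^-$ places $t_1 \in \mathcal{R}^{>A^-}$, and $p$ contains $x < r$ for every $r \in \mathcal{R}^{>A^-}$, we obtain $t_i < t_1$. Combining, $t_1$ realizes exactly the cut computed above.

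The substantive content is the one-line observation that the Morley sequence in $p$ is strictly decreasing: once $t_1, \dots, t_j$ are chosen, each lies in $\mathcal{R}^{>A^-}$ and so becomes a ceiling for all later elements via the $x < \mathcal{R}^{>A^-}$ clause. Beyond this, the proof of the claim is a bookkeeping check matching the two cuts, so I do not anticipate a real obstacle here; the work of Theorem \ref{symmetry} will come not in the claim itself but in leveraging it (together with the fact that $q$ is $A$-invariant) to deduce the required non-dividing.
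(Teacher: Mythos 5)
Your opening reduction is where the argument fails: in an o-minimal structure a complete one-type over a set $B$ is determined by the cut it defines in $\dcl(B)$, not in the literal set $B$. The paper's proof of Theorem \ref{symmetry} arranges $A=\dcl(A)$ precisely so that one-types over $A$ are order-types over $A$; but $\dcl(At_2\dots t_n)$ is far larger than $A\cup\{t_2,\dots,t_n\}$, since it contains $f(t_2,\dots,t_n)$ for every $A$-definable function $f$. Checking that $t_1$ lies above $A^-\cup\{t_2,\dots,t_n\}$ and below $A^+$ therefore verifies only those formulas of $q|_{At_2\dots t_n}$ of the special form $x>c$ or $x<c$ with $c$ a literal parameter. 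What the claim actually asserts is that no element of $\dcl(At_2\dots t_n)$ lies weakly above $t_1$ while still below every element of $A^+$ --- equivalently, that no $A$-definable function applied to $t_2,\dots,t_n$ lands in the gap between $t_1$ and $A^+$. Your ``bookkeeping check'' never touches this, and it is the entire content of the claim.

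Ruling out such a function is genuinely nontrivial and is where the paper does its work: it inducts on $n$, passes to a Morley sequence $t_1',\dots,t_n'$ in $p$ over the saturated model $\mathcal{R}$, uses o-minimality to make $f_{\underline{t}'}$ monotone and continuous near $t_n'$, uses the independence $t_n'\dind_A t_1'\underline{t}'$ to force $f_{\underline{t}'}$ to be decreasing, and then derives a contradiction because $f_{\underline{t}'}$ would map a cofinal segment of the convex hull $\mathcal{A}^-$ of $A^-$ onto a coinitial segment of $\mathcal{R}^{>A^-}$, which saturation forbids (the cofinality of $\mathcal{A}^-$ is at most $|A|$, strictly below the coinitiality of $\mathcal{R}^{>A^-}$). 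You also have the division of labor inverted: once the claim is proved, deducing $t_1\dind_A t_2\dots t_n$ is the easy step, since $q$ is $A$-invariant and a type extending to a global $A$-invariant type does not divide over $A$. So the proposal leaves the essential point of the claim unproved.
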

\begin{proof of claim}The claim is clear when $n$ is $1$ or $2$, so assume inductively that $t_1 \models q|_{A t_2 \dots t_{n-1}}$, where $n>2$, and 
let $\underline{t} = t_2 \dots t_{n-1}$.  For a contradiction, assume that $f$ is an $A$-definable function such that $f(t_n, \underline{t}) > t_1$  and $f(t_n, \underline{t}) \models p|_{A}$.  
 Let $ t_1', \dots, t_n'$ realize a finite Morley sequence in $p$ over $\mathcal R$.  Note that since $$\tp(t'_1  \dots t'_n /A)= \tp(t_1 \dots t_n /A),$$ we also have $f(t'_n , \underline{t}') > t_1'$, where $\underline{ t}'= t_2' \dots t_{n-1}'$.
The function $f_{\underline{t}'}$ is strictly monotone and continuous on some interval $(r_1,r_2)$, where $r_1,r_2 \in \dcl (A \underline{t}' )$ and $r_1<t'_n <r_2$.  Now $r_1<a$ for some $a\in A^{-}$
because $t'_n \dind_A \; \underline{t}'$.
Let $f^{-1}_{\underline{t}'}$ be the inverse function of $f_{\underline{t}'}|_{(r_1,r_2)}$.  Note that the domain of $f^{-1}_{\underline{t}'}$ contains $t'_1$ (by the inductive assumption), and that $t'_n<f^{-1}_{\underline{t}'}(t_1)$ because $t'_n \dind_A t'_1 \underline{t}'$.
So $f_{\underline{t}'}$  is strictly decreasing on $(r_1,r_2)$.  Let $\mathcal{A}^-$ be the convex hull of $A^{-}$ in $\mathcal R$.
  Note that $f_{\underline{t}'}$ maps a cofinal segment of $\mathcal{A}^-$  to a coinitial segment of $\mathcal R^{>A^-}$.  
 But the cofinality of $\mathcal A^-$ is equal to $|A|$, and thus,  by saturation of $\mathcal{R}$, is strictly smaller than the coinitiality of $\mathcal R^{>A^-}$, a contradiction.
\end{proof of claim}
Next, we need to consider the situation when {\em $p|_A$ is a non-cut\/}.  Then there are, essentially, three cases (once we have dealt with those it will be clear how to handle all cases). The type $p$ could be of the form (\ref{left}) above where $A^+$ has a least element or is empty, in which case, the proof of the case when $p|_A$ is a cut works.  Or $p$ may be of one of the forms below:
\begin{equation}\label{infnghood}
\{ x>a   \}\cup \{ x<\R^{>a} \}, \mbox{ for a fixed }a\in A, \mbox{ or of the form }
\end{equation}
\begin{equation} \label{all_the_way_right}
\{ x>\mathcal{R}  \}.
\end{equation}

\noindent Consider (\ref{infnghood}).  Here the previous proof works as well, but is in fact simpler.  The step of switching from $t_1, \dots, t_n$ to $t_1', \dots, t_n'$ may be skipped, as there is no need to go to a saturated model to get that the cofinality of the left side of $p$ is different than the coinitiality of the right side of $p$.   

In an o-minimal field this finishes the proof, since the function $x \mapsto 1/x$ maps the type $\{x> \mathcal R\}$ to a type as in (\ref{infnghood}).  In general, though, we have the following claim.

\begin{claim}
  Let $p$ be as in (\ref{all_the_way_right}) and let $q$ be the type over $\R$ implied by $$\{x < r | r \in \mathcal{R}^{>A}\} \cup \{x > a| a \in A\}.$$  Then $t_1 \models q|_{At_n \dots t_2}$.
\end{claim}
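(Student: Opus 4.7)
I plan to prove the claim by induction on $n$, with the base case $n = 1$ being immediate since $q|_A$ reduces to $\{x > a : a \in A\}$, already implied by $t_1 \models p|_A$. For the inductive step, I assume $t_1 \models q|_{A t_2 \dots t_{n-1}}$ and suppose for contradiction that there is an $A$-definable function $f$ such that $f(t_n, \underline{t}) > A$ and $t_1 \geq f(t_n, \underline{t})$, where $\underline{t} := (t_2, \dots, t_{n-1})$. The decisive feature here, in contrast with the cut case handled above, is that $p = \{x > \mathcal{R}\}$ is $\emptyset$-invariant, so the Morley sequence construction forces $t_n$ to exceed every element of $\dcl(A \underline{t} t_1)$: given $c \in \dcl(A \underline{t} t_1)$, by saturation there is $r \in \mathcal{R}$ with $r \equiv_\emptyset c$, and $p \ni (x > r)$ propagates to $x > c$ in $p|_{A \underline{t} t_1}$. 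This eliminates the need for a second Morley sequence together with a cofinality comparison.

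By o-minimal cell decomposition, $f_{\underline{t}}(y) := f(y, \underline{t})$ is constant or strictly monotone on some $A\underline{t}$-definable cell $(r_1, r_2) \ni t_n$ with $r_1, r_2 \in \dcl(A\underline{t}) \cup \{\pm\infty\}$; since $t_n > \dcl(A\underline{t})$, the right endpoint is forced to be $+\infty$. I then split into three cases. If $f_{\underline{t}}$ is constant with value $c$, then $c \in \dcl(A\underline{t}) \cap \mathcal{R}^{>A}$ and the inductive hypothesis gives $t_1 < c$, contradicting $t_1 \geq c$. If $f_{\underline{t}}$ is strictly increasing, then either $t_1$ lies in its image, so $u := f_{\underline{t}}^{-1}(t_1) \in \dcl(A \underline{t} t_1)$ and $f(t_n) \leq f(u)$ force $t_n \leq u$ contradicting $t_n > \dcl(A \underline{t} t_1)$; or $t_1 < f_{\underline{t}}(r_1^+) \leq f(t_n)$ contradicts $t_1 \geq f(t_n)$ directly; or $t_1 \geq L := \lim_{x \to \infty} f_{\underline{t}}(x)$ with $L \in \dcl(A\underline{t}) \cap \mathcal{R}^{>A}$, so the inductive hypothesis gives $t_1 < L$, contradiction. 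If $f_{\underline{t}}$ is strictly decreasing with limit $L$, then either $L < a$ for some $a \in A$ (including $L = -\infty$), in which case $x_a := f_{\underline{t}}^{-1}(a) \in \dcl(A \underline{t})$ gives $t_n < x_a$ contradicting $t_n > \dcl(A \underline{t})$; or $L > A$, in which case $L \in \dcl(A \underline{t}) \cap \mathcal{R}^{>A}$, the inductive hypothesis gives $t_1 < L$, while the strict decrease of $f_{\underline{t}}$ to $L$ forces $f(t_n) > L$, contradicting $t_1 \geq f(t_n)$.

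The main subtlety is the strictly decreasing sub-case with $L > A$: it is the only place where both the inductive hypothesis and the strict inequality $f(t_n) > L$ (from the strictness of the monotone limit) must be combined. The other cases reduce rather directly to the $\dcl$-membership fact about $t_n$.
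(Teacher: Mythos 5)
Your proof is correct and follows essentially the same route as the paper's: induction on $n$, the observation that $t_n$ exceeds all of $\dcl(A\underline{t}t_1)$ because $p$ is the type at $+\infty$, restriction of $f_{\underline{t}}$ to a monotonicity interval whose right endpoint must be $+\infty$, and then a contradiction obtained by applying $f_{\underline{t}}^{-1}$ together with the inductive hypothesis to control the endpoints of the image interval. Your explicit sub-case analysis (constant; where $t_1$ sits relative to the image; the limit $L$) just spells out what the paper compresses into its ``by minimality of $n$, the endpoint of this interval is \dots'' assertions, so the two arguments coincide in substance.
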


\begin{proof of claim}  The claim is clear for $n=1$ and $n=2$.  So let $n>2$, let $\underline{t} = t_2 \dots t_{n-1}$, and assume towards a contradiction that $f$ is an $A$-definable function in $n-1$ variables such that $A<f_{\underline{t}}(t_n )<t_1$.  Furthermore, we may assume that $n$ is least such that this occurs.  We set $s=f_{\underline{t}}(t_n )$.  Let $f^{-1}_{\underline{t}}$ be the inverse of $f_{\underline{t}}$, after restricting $f_{\underline{t}}$ to an $A\underline{t}$-definable interval on which it is strictly monotone and continuous. Assume that $f_{\underline t}$ (and therefore $f^{-1}_{\underline t}$) is increasing.  Then $f^{-1}_{\underline t}$ is increasing on an $A\underline{t}$-definable interval containing $s$.  As $n$ was chosen to be minimal, the right endpoint of this interval is greater than $t_1$.  
So $f^{-1}_{\underline{t}}(t_1 ) > f^{-1}_{\underline{t}}(s) = t_n$, a contradiction with $t_n \dind_A t_1 \dots t_{n-1}$. It follows that $f_{\underline{t}}$ is decreasing, and thus $f_{\underline{t}}^{-1}$ is decreasing on an $A\underline{t}$-definable interval containing $t_1$.  By minimality of $n$, the left endpoint of this interval is less than some $a\in A$.  So $f^{-1}_{\underline{t}}(a)> f^{-1}_{\underline{t}}(s) = t_n$, a contradiction.  
\end{proof of claim}

\end{proof of theorem}

\begin{corollary}
 \label{separation} Let $p \in S_1 (\mathcal{R})$ be the (o-minimal) type $$\{x>r| \, \exists v\in V \, (r<v) \} \cup \{ x< r | r\in \R^{>V}\}.$$  If $t_1, \dots , t_n$ is a finite Morley sequence in $p$ over $R$ and $1 \leq k <n$, then $t_{1},\dots ,t_{k}$ is $R$-separated from $t_{k+1},\dots  ,t_{n}$.
\end{corollary}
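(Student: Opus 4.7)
The plan is to deduce the corollary directly from Theorem~\ref{symmetry}, using the observation made right after the dividing example in this section, namely that if two tuples $\ua$ and $\ub$ are not separated in some $q \in S_1(A)$, then either $\tp(\ua/A\ub)$ or $\tp(\ub/A\ua)$ divides over $A$.

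First I would verify that Theorem~\ref{symmetry} applies to the situation of the corollary. The type $p$ described in the statement is precisely the $V$-invariant type introduced just before Theorem~\ref{symmetry}. Since $V \subseteq R$ and any $V$-invariant type is automatically $R$-invariant, $p$ is $R$-invariant. Thus Theorem~\ref{symmetry} applies with $A = R$ and yields, for each $k$ with $1 \leq k < n$, the two nondividing statements
\[ (t_1, \dots, t_k) \dind_R (t_{k+1}, \dots, t_n) \quad \text{and} \quad (t_{k+1}, \dots, t_n) \dind_R (t_1, \dots, t_k). \]

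Taking the contrapositive of the observation above with $\ua = (t_1, \dots, t_k)$, $\ub = (t_{k+1}, \dots, t_n)$ and $A = R$, the two nondividing relations force $(t_1, \dots, t_k)$ and $(t_{k+1}, \dots, t_n)$ to be separated in every one-type $q \in S_1(R)$, which is precisely the definition of $R$-separation. There is essentially no obstacle beyond a careful bookkeeping invocation of already-established ingredients; the real work was packed into proving Theorem~\ref{symmetry}. The only mild point to check is that the "two-sided nondividing implies separation" principle is being used for tuples rather than singletons (as in Lemma~\ref{separation for singletons}), but this is exactly the generality in which the remark after the example was stated, so no additional argument is needed.
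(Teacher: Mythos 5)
Your proposal is correct and is exactly the derivation the paper intends: Corollary~\ref{separation} is stated without proof precisely because it follows from Theorem~\ref{symmetry} (with $A=R$, noting $V$-invariance implies $R$-invariance) combined with the remark after the dividing example that two-sided nondividing over $A$ implies $A$-separation for tuples. No gaps.
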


\end{section}

\begin{section}{Elementary extensions}\label{section el ext}

In this section, we assume that $p$ is as in Corollary \ref{separation} above.
For $a\in \mathcal{R}$ we denote by $\ra$ the (elementary) substructure of $\mathcal{R}$ generated by $a$ over $R$. Below, we abuse notation by writing $(\ra , V')$ for the $\mathcal{L}$-structure in which the predicate for the convex subring is realized by the set $V'$.

First we shall give a criterion for $(\ra, V')$ to be an elementary extension of $(R,V)$ in terms of the possibility of building a Morley sequence in $p$ which is separated from $a$.  Then we show that this criterion is satisfied for certain kinds of extensions. 

\begin{lemma} \label{elementary extension lemma} Suppose $V\not=R$.  Let $a\in \mathcal{R}$ and let $(\ra , V' )$ be a superstructure of $(R,V)$.
Suppose there is a global type $q$, invariant over $R$ and extending $p|R$, such that for each $n$, there is a finite Morley sequence $t_1, \dots, t_n$  in $q$ over $\ra$ (and hence over $R$) that is $R$-separated from $a$ and such that $V' < t_1, \dots, t_n < \ra^{>V'}$.  Then $(R,V) \preceq (\ra, V')$.
\end{lemma}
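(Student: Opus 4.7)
The plan is to verify the Tarski--Vaught criterion: we show that for every $\mathcal L$-formula $\phi(\bar y)$ and $\bar b \in R$, $(R,V) \models \phi(\bar b) \iff (\ra,V') \models \phi(\bar b)$. Together with the given inclusion $(R,V) \subseteq (\ra,V')$, this yields $(R,V) \preceq (\ra,V')$.

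The key reduction is the trace representation of $V$ and $V'$. Any $t$ realizing $q|_{\ra}$ satisfies both $V < t < R^{>V}$ (since $q$ extends $p|_R$) and $V' < t < \ra^{>V'}$ (by the bounds on the Morley sequence assumed in the hypothesis). By convexity we therefore have $V = R \cap [-t,t]$ and $V' = \ra \cap [-t,t]$. Replacing each subformula $V(\tau)$ of $\phi$ by the $\mathcal L_0$-formula $-t \leq \tau \leq t$, and relativizing every quantifier of $\phi$ to a predicate for $R$ (respectively $\ra$), converts $(R,V) \models \phi(\bar b)$ and $(\ra,V') \models \phi(\bar b)$ into $\mathcal L_0$-sentences in $\mathcal R$ with parameter $t$ that differ only in the predicate to which their quantifiers are relativized.

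We now apply Theorem \ref{poizat} iteratively to eliminate each relativization in favor of an unrestricted $\mathcal R$-quantifier, handling universal quantifiers by negation; we apply it with $\ra$ in the role of BP's small model when processing the $\ra$-relativized version, and with $R$ in that role when processing the $R$-relativized version. At each step Theorem \ref{poizat} requires a pair of tuples realizing the same type over $\ra$ (resp.\ $R$) as the current parameter tuple and separated over $\ra$ (resp.\ $R$). A sufficiently long Morley sequence $t_1,\ldots,t_N$ as supplied by the hypothesis provides these pairs: it is simultaneously a Morley sequence over $R$, since realizing $q|_{\ra t_1\cdots t_{i-1}}$ entails realizing $q|_{R t_1\cdots t_{i-1}}$, so by indiscernibility consecutive blocks of equal length share a type over both $\ra$ and $R$, and by Theorem \ref{symmetry} together with Corollary \ref{separation} any partition into consecutive blocks is both $\ra$-separated and $R$-separated. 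The same blocks can therefore serve as the separated pair in the $\ra$- and the $R$-elimination simultaneously.

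After all eliminations, both $(\ra,V') \models \phi(\bar b)$ and $(R,V) \models \phi(\bar b)$ become equivalent to the same $\mathcal L_0$-formula $\Phi(\bar b,t_1,\ldots,t_N)$ holding in $\mathcal R$, yielding the desired equivalence. The main obstacle is the bookkeeping in the iterated application of Theorem \ref{poizat}: after each step the parameter tuple accumulates, and one must match this growing tuple to a fresh initial segment of the Morley sequence so that the separation hypothesis is preserved at every stage.
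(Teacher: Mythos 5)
Your first half — rewriting $V$ and $V'$ as traces of $[-t,t]$ and then eliminating the relativized quantifiers one at a time via Theorem \ref{poizat}, using consecutive blocks of the Morley sequence (which are simultaneously $R$- and $\ra$-separated by Theorem \ref{symmetry}) as the required separated pairs — is exactly the paper's Claim inside the proof of Lemma \ref{elementary extension lemma}. But the paper deliberately stops that induction at one free variable and then spends the entire second half of the proof on the last quantifier, and that is the half you are missing.

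The gap is in the sentence ``after all eliminations, both become equivalent to the same $\mathcal L_0$-formula $\Phi(\bar b,t_1,\dots,t_N)$.'' For the outermost step you need the hard direction of the elimination over $R$: if some $y\in\mathcal R$ satisfies $\theta(y,\vec t\,)\wedge\theta(y,\vec t\,')$, then some $y\in R$ satisfies $\theta(y,\vec t\,)$. Separation of $\vec t$ from $\vec t\,'$ only handles the configuration where the relevant interval of $\theta(\mathcal R,\vec t\,)\cap\theta(\mathcal R,\vec t\,')$ has both endpoints outside $R$ realizing the same type over $R$. It does nothing in the configuration where $\theta(\mathcal R,\vec t\,)$ contains an interval $(c,g(\vec t\,))$ with $c\in R$ and $g(\vec t\,)\notin R$: there the doubled formula is satisfied in $\mathcal R$ by any point of $(c,\min(g(\vec t\,),g(\vec t\,')))$, yet $(c,g(\vec t\,))\cap R$ could a priori be empty (namely if $g(\vec t\,)$ realized the non-cut $c^+$ over $R$). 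Theorem \ref{poizat} as quoted does not exclude this, and your argument never addresses it. This is precisely what the paper's Case 2 does: using $V\neq R$ and the fact that $V$ is a group, it shows each $t_{m+1}$ realizes a genuine cut over $R\langle t_1,\dots,t_m\rangle$, hence every element of $\dcl(R\vec t\,)\setminus R$ realizes a cut over $R$, so such an interval always meets $R$. Correspondingly, the paper's Case 1 (and the ``point'' case) uses the $R$-separation of the Morley sequence \emph{from $a$} to compare a witness in $\ra$ with $R$. Your proof uses neither the hypothesis $V\neq R$ nor the hypothesis that the $t_i$ are $R$-separated from $a$; since both are essential (the second is the whole point of the delicate constructions in Lemma \ref{elext}), a proof that never invokes them cannot be complete. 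To repair your argument you would have to supply, for the final elimination step, exactly the case analysis the paper carries out after its Claim.
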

 
\begin{proof}
 By the Tarski-Vaught test, it suffices to show that if $(\ra , V' ) \models \phi (h(a ))$, where $h$  is an $R$-definable function and $\phi (y)$ is an $(R,V)$-formula, then $(R,V) \models \phi (r)$ for some $r \in R$.

\begin{claim}
Let $\phi (x)$ be a formula in the language of $(R,V)$ with parameters from $R$, where $x=(x_1 , \dots ,x_n )$.  Then there is $k$ and an $R$-formula $\Phi (x)$ with parameters $t_1 , \dots ,t_k \in \mathcal{R}$ as in the hypotheses of the lemma, and such that 
$$\phi (R)=\Phi (\mathcal{R}) \cap R^n \mbox{ and } \phi (\ra )=\Phi (\mathcal{R} ) \cap \ra^n .$$
\end{claim}
\begin{proof of claim}

We proceed by induction on the complexity of $\phi$.  First, let $\phi$ be open.  Then $\phi$ is a finite disjunction of formulas of the form 
$$\sigma (x) \wedge f_1 (x)\in V \wedge f_2 (x) \in V \wedge      \dots \wedge g_1 (x) >V \wedge g_2 (x)>V      \wedge \dots ,$$ where $\sigma$ is an $R$-formula and $f_1 , \dots ,g_1 , \dots $ are finitely many $R$-definable functions.
Then $\phi (R)$ and $\phi (\ra )$ are the traces of a finite disjunction of $\mathcal{R}$-formulas $$\sigma (x) \wedge f_1 (x) <t_1 \wedge f_2 (x)<t_1    \wedge \dots \wedge g_1 (x) >t_1 \wedge g_2 (x)>t_1 \wedge \dots ,$$
where $t_1 \models q|_{\ra}$, with $q$ the global extension of $p$ from the hypothesis of the lemma, and $V'<t_1 <R\langle a \rangle^{>V'}$.

Now suppose the claim holds for an $(R,V)$-formula $\phi (x)$.  Then it also holds for $\neg \phi (x)$, so it suffices to show that it holds for $\exists x_n \phi (x)$.  We may assume inductively that

$$\phi (R) = \theta (\R ) \cap R^{n}\mbox{ and }\phi (\ra ) = \theta (\R ) \cap \ra^{n},$$ where $\theta$ is an $\R$-formula in the parameters $t_1 , \dots , t_j$, which satisfy the hypotheses of the lemma. 
By Fact \ref{poizat}, 

$$\exists x_n \in R \left( \theta (\R ) \cap R^{n} \right)=\left( \exists x_n  (\theta (\R) \wedge \theta' (\R ))  \right) \cap R^{n-1} ,$$
where $\theta'$ is any instance of $\theta$ obtained by replacing $t_1 , \dots , t_j$ by $t_{j+1}, \dots , t_{2j}$ such that $t_1 , \dots , t_j$ and $t_{j+1}, \dots ,t_{2j}$ are $R$-separated.  Similarly,
$$\exists x_n \in \ra \left( \theta (\R ) \cap \ra^{n} \right) = \left( \exists x_n  (\theta (\R ) \wedge \theta' (\R ) ) \right) \cap \ra^{n-1},$$ whenever $\Theta'$ is an instance of $\Theta$ obtained by replacing $t_1 , \dots ,t_j$ by $t_{j+1}, \dots ,t_{2j}$ such that $t_1 , \dots ,t_{j}$ and $t_{j+1} , \dots ,t_{2j}$ are $\ra$-separated.

We let $t_{j+1 }, t_{j+2} ,\dots ,t_{2j}\in \mathcal{R}$ be so that $t_1, \dots, t_{2j}$ are as in the hypotheses of the lemma.  Then by Corollary \ref{separation}, $t_1 , \dots ,t_j$ and $t_{j+1} , \dots ,t_{2j}$ are $\ra$-separated, and also $R$-separated.  Thus both the set $(\exists x_n \phi ) (R)$ and the set   $(\exists x_n \phi  )(\ra )$ are traces of $\exists x_n  (\Theta (\R) \wedge \Theta'(\R ) )$, with $\Theta'$ as above. 
 
\end{proof of claim}

\noindent Now suppose $(\ra , V')\models \phi (h(a))$, where $\phi (y)$ is an $(R,V)$-formula and $h$ is an $R$-definable function.  By the above claim, there is an $\R$-formula $\theta (y)$ with parameters $t_1 , \dots ,t_k$, such that $a$ and $t_1 , \dots ,t_k$ are $R$-separated and $\phi (R)=\theta (\R) \cap R$ and $\phi (\ra) = \theta (\R) \cap \ra$.  Since $\R$ is o-minimal, $\theta (\R )$ is a finite union of points and intervals.  If it is a point, then there is nothing to show, so  
we may assume that $\theta (x)$ defines an interval $(f(t_1 , \dots , t_k),g(t_1 , \dots , t_k ))$, where $f$, $g$ are $R$-definable functions or $f=- \infty$ or $g=+\infty$.  Note that if $f(t_1 , \dots ,t_k ) \in R$ or $f=-\infty$ and at the same time $g(t_1 , \dots ,t_k ) \in R$ or $g=\infty$, then there is nothing to show.  The case when $f$ is an $R$-definable function and $f(t_1 , \dots ,t_k ) \not\in R$ and $g=\infty$ reduces to the case when $f(t_1 , \dots , t_k ) \in R$ and $g$ is an $R$-definable function so that $g(t_1 , \dots , t_k ) \not\in R$ after using the function $x \mapsto \frac{1}{x}$.  Similarly for $f = -\infty$ and $g$ an $R$-definable function so that $g(t_1 , \dots , t_k ) \not\in R$.  So it suffices to discuss the two cases below.
\begin{trivlist}
\item[{\em Case 1.\/}]  Suppose $f,g$ are $R$-definable functions and $f(t_1 , \dots ,t_k ) \not\in R$ and $g(t_1 , \dots ,t_k ) \not\in R$.
If $$f(t_1 , \dots , t_k)<h(a)<g(t_1 , \dots , t_k ),$$ then $\text{tp}(f(t_1 , \dots , t_k)/R) \not=\text{tp}(g(t_1 , \dots , t_k )/R)$ (as $a$ and $t_1 ,\dots , t_k$ are $R$-separated), hence there is some $r\in R$ with $$f(t_1 , \dots , t_k)<r<g(t_1 , \dots , t_k ).$$  

\item[{\em Case 2.\/}] Suppose $f(t_1 , \dots , t_k ) \in R$ and $g$ is an $R$-definable function so that $g(t_1 , \dots ,t_k ) \not\in R$.  (A similar argument works for the case when $f$ is an $R$-definable function so that $f(t_1 ,\dots ,t_k ) \not\in R$ and $g(t_1 , \dots ,t_k ) \in R$.)  We may assume, by translating, that $f(t_1 , \dots , t_k )=0$.  So it suffices to prove that there is $r\in R$ such that $0<r<g(t_1 , \dots ,t_k )$.  We show this by induction on $k$.  
Note that by the assumption that $V\not=R$, $t_1$ realizes a cut in $R$.  So if $k=1$, then $g(t_1 )$ realizes a cut in $R$, so there has to be $r\in R$ so that $0<r<g(t_1 )$.  Now suppose the claim is true for $1,\dots ,m$ and let $k=m+1$.  Note that since $V$ is a group, $t_{m+1}$ realizes a cut in $R\langle t_1 , \dots ,t_m \rangle$: Suppose not, then there would be some closest $\beta \in R\langle t_1, \dots, t_m \rangle$, which by the choice of $t_{m+1}$ must be greater than every element of V and less than every other element of $R\langle t_1 , \dots ,t_{m} \rangle^{>V}$.  But $\frac{1}{2}\beta$ is also greater than every element of V, a contradiction.  Since $t_{m+1}$ realizes a cut in $R\langle t_1 , \dots ,t_m \rangle$, so does $g(t_1, \dots, t_{m+1})$.  Thus there is some $\gamma \in R\langle t_1 , \dots ,t_m \rangle$ so that $0<\gamma <g(t_1, \dots, t_{m+1})$.  By induction, there is also $r\in R$, with $$0<r<\gamma <g(t_1, \dots, t_{m+1}).$$

\end{trivlist}

\end{proof}

\begin{lemma}  Let $n>0$.  Then there is an $R$-invariant global type $q$ extending $p|_R$ and there is a finite Morley sequence $t_1, \dots, t_n$  in $q$ over $\ra$ (and hence over $R$) that is $R$-separated from $a$ and such that $t_1, \dots ,t_n$ lie in the cut between $V'$ and $\ra^{>V'}$ in each of the following cases:
\begin{itemize}
 \item[$a)$]  $a \models p|_{R}$ and $V'$ is the convex hull of $V$ in $\ra$. 
\item[$b)$]   $a\models p|_{R}$ and $V'=\{  x\in \mathcal{R}:\; |x|<R^{>V}  \}.$  
\item[$c)$]   $a$ is such that $\dcl_R (a) \cap p|_{R} = \emptyset$ (hence $V'$ is the convex hull of $V$ in $\ra$ and $V'=\{  x\in \ra :\; |x|<R^{>V}  \}$).
\end{itemize}
\label{elext}
\end{lemma}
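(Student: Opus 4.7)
The plan is to exhibit, in each case, an $R$-invariant global $1$-type $q$ extending $p|_R$ and then take the Morley sequence $t_1,\dots,t_n$ in $q$ over $\ra$. The natural candidates are the two canonical $R$-invariant extensions of $p|_R$: let $q_L$ be the type over $\mathcal{R}$ determined by $x<r \in q_L \Leftrightarrow \type(r/R)\geq p|_R$, and define $q_R$ dually by $x<r \in q_R \Leftrightarrow \type(r/R)>p|_R$. Each is visibly $R$-invariant (the decision depends only on $\type(r/R)$) and restricts to $p|_R$ on $R$.

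For case (a) take $q=q_L$. Any $r\in V'$ satisfies $|r|<v$ for some $v\in V$, so $\type(r/R)<p|_R$ and $q\vdash x>r$; any $r\in\ra^{>V'}$ is above $V$, so either $r\models p|_R$ or $\type(r/R)>p|_R$, and in both cases $q\vdash x<r$. For case (b) take $q=q_R$: every $r\in V'$ satisfies $|r|<R^{>V}$, hence $\type(r/R)\leq p|_R$, while every $r\in\ra^{>V'}$ must have $\type(r/R)>p|_R$, since any $\ra$-realization of $p|_R$ already lies in $V'$ by the definition in (b). For case (c) take $q=q_L$: the hypothesis $\dcl_R(a)\cap p|_R=\emptyset$ ensures that no element of $\ra$ realizes $p|_R$ and that the two prescriptions for $V'$ coincide, after which the verification mirrors that of (a). In each case the resulting Morley sequence $t_1,\dots,t_n$ in $q$ over $\ra$ is contained in $p|_R(\mathcal{R})$ and lies in the cut $(V',\ra^{>V'})$.

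For the $R$-separation from $a$ in cases (a) and (b), the slick move is this: since $a\models p|_R=q|_R$, the tuple $(a,t_1,\dots,t_n)$ is itself a Morley sequence in $q$ over $R$ (using $\ra=\dcl_R(Ra)$). Theorem \ref{symmetry} applied with $k=1$ yields $a\dind_R t_1\cdots t_n$, and Lemma \ref{separation for singletons}, applied with $a$ as the singleton side, delivers the $R$-separation of $a$ from $(t_1,\dots,t_n)$.

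Case (c) is the principal obstacle, since $a\not\models p|_R$ cannot be prepended to the Morley sequence and Theorem \ref{symmetry} does not apply directly. What is available is the $R$-invariance of $q$: the local type $\type(t_i/\ra t_1\cdots t_{i-1})$ is the restriction of an $R$-invariant global type, hence does not divide over $R$, so $t_i\dind_R \ra t_1\cdots t_{i-1}$, and transitivity of nondividing yields $t_1\cdots t_n\dind_R a$. To promote this to the symmetric statement $a\dind_R t_1\cdots t_n$ required by Lemma \ref{separation for singletons}, I would proceed type by type: for $s=p|_R$ the hypothesis $\dcl_R(a)\cap p|_R=\emptyset$ makes the separation condition vacuous; for any other $1$-type $s$ over $R$, the explicit recipe for $q_L$ combined with o-minimal monotonicity forces every $R$-definable image of $\bar t$ landing in $s$ to lie uniformly on one side of $\dcl_R(Ra)\cap s$, the side being determined by the monotonicity of the defining function on $p|_R$. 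The careful bookkeeping of this uniformity—especially for types $s$ realized in both $\dcl_R(Ra)$ and $\dcl_R(R\bar t)$—is the most delicate step of the proof.
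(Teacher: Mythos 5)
Your cases a) and b) are correct and essentially the paper's own argument: the same two $R$-invariant extensions of $p|_R$ are chosen, the Morley sequence built over $\ra$ is prepended by $a$ to form a Morley sequence over $R$, and the $R$-separation of $a$ from $t_1,\dots,t_n$ follows from Theorem \ref{symmetry} together with Lemma \ref{separation for singletons}, exactly as in Corollary \ref{separation}.

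The gap is in case c), which you yourself flag as ``the most delicate step'' and then do not carry out. Your assertion that the recipe for $q_L$ plus o-minimal monotonicity forces every $R$-definable image of $(t_1,\dots,t_n)$ landing in a type $s$ to lie uniformly on one side of $\dcl_R(a)\cap s(\mathcal R)$ is precisely the statement requiring proof, and it does not come for free: while a \emph{one-variable} $R$-definable function sending a $t_i$ into $\tp(a/R)$ is ruled out by the hypothesis $\dcl_R(a)\cap p|_R=\emptyset$ (compose with the inverse), a genuinely multi-variable function $g$ can send the tuple into $\tp(a/R)$, and nothing in the construction of the Morley sequence over $\ra$ excludes a configuration $g(\tilde t)<a<h(\tilde t)$ with all three realizing $\tp(a/R)$. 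The paper's proof explicitly allows for this failure and repairs it: take a trial Morley sequence $\tilde t_1,\dots,\tilde t_n$ in $p$ over $\ra$; if $a\ndind_R\tilde t$, let $m$ be minimal with $\dcl_R(\tilde t_1,\dots,\tilde t_m)\cap\tp(a/R)\neq\emptyset$, set $\underline{\tilde t}=\tilde t_1\dots\tilde t_{m-1}$, and pull $a$ back through the witnessing function to $b=f_{\underline{\tilde t}}^{-1}(a)$, which by minimality of $m$ realizes $p|_{R\underline{\tilde t}}$ and is interdefinable with $a$ over $R\underline{\tilde t}$; then rebuild the Morley sequence $t_1,\dots,t_n$ in $p$ over $Rb\underline{\tilde t}=Ra\underline{\tilde t}$. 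Now $\underline{\tilde t},b,t_1,\dots,t_n$ is a Morley sequence in $p$ over $R$, so Corollary \ref{separation} separates $\underline{\tilde t}b$ --- equivalently $\underline{\tilde t}a$, hence $a$ --- from $t_1,\dots,t_n$ over $R$. This pull-back-and-rebuild step is the missing idea; the one-sided nondividing $t_1\cdots t_n\dind_R a$ that you correctly derive from invariance and transitivity is the easy half and is not the direction Lemma \ref{separation for singletons} needs.
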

\begin{proof}  We first prove part a).  Here we may take $q=p$.  Since $a$ is greater than $V'$, we may choose $t_1, \dots, t_k$ so that $a, t_1, \dots, t_k$ are a finite Morley sequence in $p$ over $R$ while $t_1, \dots, t_k$ are a finite Morley sequence in $p$ over $\ra$. In fact this is accomplished simply by noting that $a\models p|_R$, choosing  $t_1\models p|_{Ra}$, and, inductively, $t_{i+1}\models p|_{Rat_1\dots t_i}$.

\smallskip

Under the hypotheses of b), 
 we let $q$ be the global type implied by $$\{x<r | r\in R^{>V}\} \cup \{x>r| r \in \R \textrm{ and }r< R^{>V} \}.$$  Then $q$ is invariant over $R$, and we proceed as in part a), noting that $a\models q|_R$, choosing  $t_1\models q|_{Ra}$, and, inductively, $t_{i+1}\models q|_{Rat_1\dots t_i}$.

\smallskip
The proof of part c)  is a bit more complicated and uses Lemma \ref{separation for singletons}\footnote{We thank an anonymous referee for simplifying our original proof.}.    We let $q=p$ and we build a finite Morley sequence $\tilde t_1 , \dots , \tilde t_n$ in $p$ over $\ra$.  If $a$ is $R$-separated from $\tilde t=(\tilde t_1, \dots, \tilde t_n)$ then we let $t_1, \dots, t_n$ equal $\tilde t_1 , \dots , \tilde t_n$.  Otherwise,  $a \ndind_R \tilde t$ and we have
$$g(\tilde t), a, h(\tilde t) \models \tp(a/R) \mbox{ and }
g(\tilde t) < a < h(\tilde t),$$ 
where $g$ and $h$ are $R$-definable $n$-variable functions.  In particular, we have an $R$-definable function mapping $\tilde t$ to the type of $a$ over $R$. 

Now we let $m \leq n$ be minimal such that $$\dcl_R (\tilde t_1, \dots, \tilde t_m) \cap \tp(a/R) \neq \emptyset$$ and define $\underline{\tilde t}$ to be $\tilde t_1,\dots, \tilde t_{m-1}$.  Denote an $R\tilde{t}$-definable function that maps $\tilde t_m$ to $\tp(a/R)$ by $f_{\underline{\tilde t}}$.  Let $b=f_{\underline{\tilde t}}^{-1}(a)$.  We have $b\models p|_{Ra}$ (by minimality of $m$) and $b>\tilde t_m$, by definition of $\tilde t_m$.  In fact,  $b\models p|_{R\underline{\tilde t}}$.  To see this, note that by minimality of $m$, $f_{\underline{\tilde t}}$ maps $\tilde t_m$ (and hence all realizations of $p|_{R\underline{\tilde t}}$) to $\tp(a/R\underline{\tilde t})$.  

Now build a finite Morley sequence $t_1, \dots, t_n$ in $p$ over $Rb\underline{\tilde t}$.  Note that $a$ and $b$ are interdefinable over $\underline{\tilde t}$, so this is the same as building a Morley sequence over $Ra\underline{\tilde t}$.  So $ t_1, \dots, t_n$ is also a Morley sequence in $p$ over $\ra$ (and also over $R$).  Since $\underline{\tilde t}, b, t_1, \dots, t_n$ form a finite Morley sequence in $p$ over R, Corollary \ref{separation} implies that $\underline{\tilde t}, b$ is separated from $t_1, \dots, t_n$ over $R$.  This is the same as $\underline{\tilde t}, a$ being separated from $ t_1, \dots, t_n$ over $R$, which in turn is stronger than $t_1, \dots, t_n$ being separated from $a$ over $R$.



\end{proof}
Note that if $\ra$ is such that the condition in part c) of the previous lemma is not satisfied, then one may assume (by the Steinitz exchange property of $\dcl$) that $a\models p$.  
We have thus shown the following:

\begin{theorem}\label{eext} Let $a\in \mathcal{R}$, and let $V'\subseteq \ra$ be such that $V'\cap R=V$.   Then $(\ra ,V')$ is an elementary extension of $(R,V)$ unless there are $R$-definable one-variable functions $f$ and $g$ such that $V<f(a),g(a)<R^{>V}$ and $f(a)\in V'$ and $g(a)>V'$.
\end{theorem}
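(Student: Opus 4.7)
The plan is to deduce Theorem~\ref{eext} directly from Lemmas~\ref{elementary extension lemma} and~\ref{elext} via a short case analysis on the type of $a$ over $R$ and the shape of $V'$, while implicitly assuming $V\neq R$ (otherwise $p$ is not defined) and that $V'$ is a convex subring of $\ra$ (otherwise $(\ra, V')$ cannot be elementarily equivalent to $(R,V)$ in any case). Assume throughout that there are no $R$-definable one-variable $f,g$ with $V<f(a),g(a)<R^{>V}$, $f(a)\in V'$ and $g(a)>V'$; the goal is to verify the hypothesis of Lemma~\ref{elementary extension lemma}, which then delivers $(R,V)\preceq (\ra, V')$.

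First I would distinguish whether $\dcl_R(a)\cap p(\mathcal R)=\emptyset$. If yes, then no element of $\ra$ lies in the ``gap'' $\{x:V<|x|<R^{>V}\}$, so the convex hull of $V$ in $\ra$ coincides with $\{x\in\ra:|x|<R^{>V}\}$; thus $V'$ must equal this unique convex subring, and Lemma~\ref{elext}(c) directly supplies, for each $n$, the separated Morley sequence over $\ra$ that is required by Lemma~\ref{elementary extension lemma}. If no, some $a'\in\dcl_R(a)$ realizes $p|_R$; Steinitz exchange for $\dcl$ in the o-minimal structure $R$ then gives $a\in\dcl_R(a')$, so $R\langle a\rangle=R\langle a'\rangle$, and since the no-obstruction hypothesis refers only to the set $\dcl_R(a)=\dcl_R(a')$ of values of $R$-definable one-variable functions at the generator, we may replace $a$ by $a'$ and assume $a\models p|_R$.

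With $a\models p|_R$ in hand, I claim the no-obstruction hypothesis forces $V'\in\{V'_{\min},V'_{\max}\}$, where $V'_{\min}$ is the convex hull of $V$ in $\ra$ and $V'_{\max}=\{x\in\ra:|x|<R^{>V}\}$. Indeed, if $V'\supsetneq V'_{\min}$, pick $y\in V'$ with $|y|>V$; convexity of $V'$ together with $V'\cap R=V$ forces $|y|<R^{>V}$, so writing $|y|=f(a)$ exhibits the $f$-side of the obstruction. Dually, if $V'\subsetneq V'_{\max}$, pick $y\in\ra\setminus V'$ with $V<y<R^{>V}$; convexity of $V'$ gives $y>V'$, producing the $g$-side. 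Hence $V'$ is one of the two extremes, and parts (a) and (b) of Lemma~\ref{elext} cover these two subcases respectively, furnishing in each the Morley sequence demanded by Lemma~\ref{elementary extension lemma}.

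Assembling the three cases, the hypothesis of Lemma~\ref{elementary extension lemma} is met in every scenario, so $(R,V)\preceq(\ra,V')$. The step that I expect to require the most care is the Steinitz-exchange move, where one has to verify that replacing $a$ by an interdefinable realization of $p$ genuinely preserves both $\ra$ and the no-obstruction hypothesis, together with the convexity bookkeeping that pins $V'$ to one of its two extremes. Once these are in place, the argument is essentially a clean bundling of the two earlier lemmas and carries no further technical obstacle.
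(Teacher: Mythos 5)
Your proposal is correct and follows essentially the same route as the paper: the theorem is obtained by combining Lemma~\ref{elementary extension lemma} with the three cases of Lemma~\ref{elext}, using Steinitz exchange to reduce to $a\models p|_R$ when $\dcl_R(a)\cap p|_R\neq\emptyset$, and observing that the absence of the $f,g$ obstruction pins $V'$ to either the convex hull of $V$ or $\{x:|x|<R^{>V}\}$. You simply make explicit the convexity bookkeeping that the paper leaves as an unwritten remark before stating the theorem.
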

In \cite{thesispaper} (Lemma 3.4, p.127) it is shown that if $R$ is $\omega$-saturated and $V$ is the convex hull of the rationals in $R$, then $(R,V)\models \Sigma (1)$.  Using Theorem \ref{eext}, we can now drop the saturation assumption on $R$:
\begin{corollary}\label{convhullofq}
Let $V$ be the convex hull of $\mathbb{Q}$ in $R$.  Then $(R,V)\models \Sigma (1)$.
\end{corollary}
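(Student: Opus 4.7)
The plan is to reduce the statement to the already-known saturated case, which is Lemma 3.4 of \cite{thesispaper}. I would begin by fixing an $\omega$-saturated $\mathcal{L}_0$-elementary extension $R^*$ of $R$, and letting $V^*$ denote the convex hull of $\mathbb{Q}$ in $R^*$. Since $V$ is the convex hull of $\mathbb{Q}$ in $R$, one has $V^* \cap R = V$; more generally, for any intermediate field $R \subseteq R' \subseteq R^*$, the set $V^* \cap R'$ coincides with the convex hull of $\mathbb{Q}$ in $R'$. Lemma 3.4 of \cite{thesispaper} then yields $(R^*, V^*) \models \Sigma(1)$. As the maximal ideal $\mathfrak{m}$ is first-order definable from $V$, the instance of $\Sigma(1)$ corresponding to a fixed $R$-definable function is an $\mathcal{L}$-sentence with parameters in $R$, so it will be enough to show the elementarity $(R,V) \preceq (R^*, V^*)$.

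For that elementarity I would consider the directed family of $\mathcal{L}$-structures
$$\bigl\{\, (R\langle A\rangle,\, V^* \cap R\langle A\rangle) : A \subseteq R^* \text{ finite}\,\bigr\},$$
where $R\langle A\rangle$ denotes the $\mathcal{L}_0$-substructure of $R^*$ generated by $R \cup A$. The directed union of this family is $(R^*, V^*)$. By the directed-chain generalisation of Tarski's elementary chain theorem, it is enough to verify that each one-element enlargement
$$(R\langle A\rangle,\, V^* \cap R\langle A\rangle) \preceq (R\langle A\cup\{c\}\rangle,\, V^* \cap R\langle A\cup\{c\}\rangle)$$
is elementary in $\mathcal{L}$. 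This will be carried out by applying Theorem \ref{eext} with $R\langle A\rangle$ in the role of $R$, with $V^* \cap R\langle A\rangle$ in the role of $V$, with $c$ in the role of $a$, and with $V^* \cap R\langle A\cup\{c\}\rangle$ in the role of $V'$, once one rules out the obstruction described in that theorem.

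The key point, and the place where the assumption that $V$ is the convex hull of $\mathbb{Q}$ does the essential work, is that this obstruction cannot occur. Indeed, if an $R\langle A\rangle$-definable function $f$ satisfied $V^* \cap R\langle A\rangle < f(c)$, then $f(c) > n$ for every $n \in \mathbb{N}$, so $f(c)$ would lie outside the convex hull of $\mathbb{Q}$ in $R^*$, hence outside $V^*$. But the obstruction simultaneously requires $f(c) \in V^* \cap R\langle A\cup\{c\}\rangle \subseteq V^*$, a contradiction. So no such $f$, and in particular no obstructing pair $(f,g)$, can exist. The main thing I expect to have to be careful about is not this incompatibility, which is essentially immediate, but rather the formal bookkeeping needed to pass from the chain of single-element elementary extensions all the way up to $(R,V) \preceq (R^*, V^*)$, instead of stopping at the weaker assertion that every finite tuple of $R^*$ sits inside some $\mathcal{L}$-elementary extension of $(R,V)$.
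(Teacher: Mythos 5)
Your argument is essentially the paper's own proof: pass to an $\omega$-saturated elementary extension $R^*$, where Lemma 3.4 of \cite{thesispaper} gives $\Sigma(1)$ for the convex hull of $\mathbb{Q}$, and pull the (parametrized, first-order) instances of $\Sigma(1)$ back to $(R,V)$ by establishing $(R,V)\preceq(R^*,V^*)$ through a chain (in your version, a directed system) of one-generator extensions, each of which is elementary by Theorem \ref{eext} because the obstruction there --- a function value simultaneously above all of $V$ and inside the convex hull of $\mathbb{Q}$ --- cannot occur. The one point you omit is the degenerate case $V=R$, i.e.\ $R$ archimedean over $\mathbb{Q}$: then $(R,V)\models\forall x\,V(x)$ while $\omega$-saturation forces $V^*\neq R^*$, so $(R,V)\not\equiv(R^*,V^*)$ and your reduction to the saturated model breaks down; moreover Lemma \ref{elementary extension lemma}, on which Theorem \ref{eext} rests, explicitly assumes $V\neq R$. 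The paper disposes of this case in one sentence at the outset, and it is indeed trivial: if $V=R$ then $\ma=\{0\}$ (a nonzero element of $\ma$ would have an inverse exceeding $\mathbb{Q}$, hence lying outside $V=R$), so $\Sigma(1)$ holds with $\epsilon_0=0$. Once $V\neq R$ is assumed, every intermediate field $R\langle A\rangle$ contains elements above $\mathbb{Q}$, so the issue does not recur at later stages and the rest of your argument goes through as in the paper.
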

\begin{proof}
If $R=V$, then the corollary clearly holds.  So suppose $R\not=V$.
Let $R'$ be an $\omega$-saturated elementary extension, and let $V'$ be the convex hull of $\mathbb{Q}$ in $R'$.  Let $a\in R'\setminus R$, let $\ra$ be the (elementary) substructure of $R'$ generated by $a$ over $R$, and let $V_a$ be the convex hull of $V$ in $\ra$.  Then $(\ra ,V_a )$ is an elementary extension of $(R,V)$, by Theorem \ref{eext}.  Using induction and the fact that the union of a chain of elementary extensions is an elementary extension, we obtain $(R,V)\preceq (R',V')$, and so $(R,V)\models \Sigma (1)$.
\end{proof}

\end{section}

\begin{section}{Traces}\label{section traces}
 In this section we assume $(R,V)\models \Sigma (1)$,
and we let $p$ be as in Corollary \ref{separation}. 
Let further $t \in \R$ be such that $t\models p|_{R}$.  We denote by $\rt$ the elementary substructure of $\R$ generated by $t$ over $R$, by $U$ the convex hull of $V$ in $\rt$, by $\ma_t$ the maximal ideal of $U$, and for $X\subseteq R^n$ we denote by $\overline{X}$ the image of $X \cap U^n$ under the residue map of $(\rt , U)$.  We shall denote the residue field of $(\rt ,U)$ by $\overline{U}$.  Then $V\subseteq U$, $\ma \subseteq \ma_t$, $\bk$ (as a set) is contained in $\overline{U}$, and the residue map of $(\rt , U)$ extends $\pi$.


\begin{lemma}\label{resfield}
$\bk = \overline{U}$.
\end{lemma}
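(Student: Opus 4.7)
The containment $\bk \subseteq \overline{U}$ is immediate from $V \subseteq U$ and $V \cap \ma_t = \ma$. My plan for the reverse containment is to take an arbitrary $\alpha \in \overline{U}$, represent it as $\alpha = \overline{f(t)}$ for some $R$-definable function $f$ with $|f(t)| \leq v_0$ for some $v_0 \in V^{>0}$, and then use $\Sigma(1)$ to construct $r \in V$ with $f(t) - r \in \ma_t$, forcing $\alpha = \pi(r) \in \bk$.

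First I would replace $f$ by its $R$-definable truncation $\phi : R \to [-v_0, v_0]$, given by $\phi(x) = f(x)$ when $|f(x)| \leq v_0$ and $\phi(x) = 0$ otherwise, so $\phi(t) = f(t)$ and $\phi$ is a bounded total $R$-definable function. I then transfer to a neighbourhood of $0$ by setting $\Phi(y) = \phi(1/y)$ for $y \neq 0$ and $\Phi(0) = 0$, so that $\Phi(1/t) = f(t)$. An affine rescaling makes $\Phi$ a function $[0,1] \to [0,1]$, to which $\Sigma(1)$ applies: there exists $\epsilon_0 \in \ma^{\geq 0}$ with $\pi \Phi(\epsilon) = \pi \Phi(\epsilon_0)$ for every $\epsilon \in \ma^{\geq \epsilon_0}$. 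Replacing $\epsilon_0$ by any larger element of $\ma^{>0}$ if necessary, I assume $\epsilon_0 > 0$ and set $r := \Phi(\epsilon_0) = \phi(1/\epsilon_0) \in V$.

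The heart of the argument is then to verify $f(t) - r \in \ma_t$, i.e.\ $|f(t) - r| < 1/v$ for every $v \in V^{>0}$. Fix such a $v$; the $R$-definable set $E_v = \{y \in R : |\Phi(y) - r| < 1/v\}$ contains $\ma^{\geq \epsilon_0}$ by the choice of $\epsilon_0$. Since $\ma^{\geq \epsilon_0}$ is convex in $R$ and $E_v$ is a finite union of intervals, some single component $(a_v, b_v) \subseteq E_v$ must contain all of $\ma^{\geq \epsilon_0}$. Then $a_v \leq \epsilon_0 < 1/t$, and $b_v$ is an upper bound of $\ma^{\geq 0}$ in $R^{>0}$, hence either $+\infty$ or an element of $(V \setminus \ma)^{>0} \cup R^{>V}$; in each case $1/t < b_v$, since $1/t$ is below every positive non-infinitesimal of $V$ and a fortiori below every element of $R^{>V}$. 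Thus $1/t \in (a_v, b_v) \subseteq E_v$, so $|f(t) - r| = |\Phi(1/t) - r| < 1/v$, and varying $v$ gives $f(t) - r \in \ma_t$ as required.

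The main obstacle I anticipate is exactly this last transport: $\Sigma(1)$ supplies residue stabilisation only over $\ma^{\geq 0}$ inside $R$, whereas the element $1/t$ of interest lies in $\ma_t \setminus \ma$, strictly outside the reach of the literal $\Sigma(1)$-conclusion. The fix is the o-minimality observation above: the $R$-interval component of $E_v$ that absorbs the convex set $\ma^{\geq \epsilon_0}$ is forced to overshoot the upper cut of $\ma^{\geq 0}$ in $R$, and $1/t$ sits just above that cut, so automatically lands inside the component.
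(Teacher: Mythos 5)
Your proof is correct, and its first half coincides with the paper's: write the given element of $\overline{U}$ as the residue of $f(t)$ for an $R$-definable $f$, apply $\Sigma(1)$ to the (rescaled) function $\epsilon \mapsto f(1/\epsilon)$ to get $\epsilon_0 \in \ma^{>0}$ with stabilized residues, and take $r$ to be the value at $\epsilon_0$. Where you diverge is in transporting the identity $\pi f(1/\epsilon)=\pi(r)$ across the cut separating $\ma$ from $R^{>\ma}$ to the point $1/t$. The paper first arranges $f(1/\cdot)$ to be continuous and strictly monotone on an interval $(a,b)$ straddling that cut and then argues via the intermediate value property: if $f(t)-r$ exceeded $\ma_t$, a value $r+s$ with $s\in R^{>\ma}$, $s<f(t)-r$, would be skipped on $(a,b)_R$. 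You instead apply overspill directly to the definable sets $E_v$: the convex set $\ma^{\geq\epsilon_0}$ lies in a single interval component of $E_v$, whose right endpoint is forced past the cut and hence past $1/t$. Your route needs no monotonicity preparation and no IVT, at the cost of quantifying over $v\in V^{>0}$ and invoking $R\preceq\rt$ once for each $v$; the two arguments are of comparable length. Two routine points to tighten: the affine rescaling onto $[0,1]$ is residue-compatible only when $v_0$ is a unit of $V$ (harmless, since $v_0\in\ma$ already forces $f(t)\in\ma_t$ and $\alpha=0$), and the component of $E_v$ containing $\ma^{\geq\epsilon_0}$ may be half-closed, which does not affect the conclusion $a_v\leq\epsilon_0<1/t<b_v$.
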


\begin{proof}  It suffices to show that for every $x \in U$ there is $x' \in V$ such that $x-x' \in \ma_t$.  So let $x\in U$.  Then $x=f(\frac{1}{t})$ for some $R$-definable $f\colon R\to R$.
If $f(\frac{1}{t}) \in V$, then there is nothing to prove.  So suppose $f(\frac{1}{t}) \in U\setminus V$.  Then there are $a\in \ma$ and $b>\ma$ so that $f$ is continuous and either strictly increasing or strictly decreasing on $(a,b)$.  Assume that $f$ is strictly increasing on $(a,b)$ (the other case is similar).
Since $(R,V)\models \Sigma (1)$, we can find $\epsilon_0 \in \ma^{>0} \cap (a,b)$ such that $\pi f(\epsilon_0 ) = \pi f(\epsilon )$ for all $\epsilon \in \ma^{>\epsilon_0 }$.  If $d=f(\frac{1}{t})-f(\epsilon_0 ) >\ma_t$, then  there would be $r \in R^{> \ma }$ such that $r <d$, and the value $f(\epsilon_0 )+r$ would not be assumed by $f$ on $(a,b)_{R}$, a contradiction with the intermediate value property in $R$.
\end{proof}
%
%
%
In the lemma below, $R_0 \langle t \rangle$ is the elementary substructure of $\R$ generated by $t$ over $R_0$.  Also note that, in spite of Lemma \ref{resfield}, there is no ambiguity when writing $\bk_{R_0}$, since, by Lemmas \ref{elementary extension lemma} and \ref{elext}, $(R,V) \preceq (\rt ,U)$ and hence
$$
\{ \pi X\colon \, X \subseteq R^n  \textrm{ is $R_0$-definable }  \}=
\{ \overline{X}\colon \, X \subseteq \rt^n \textrm{ is $R_0$-definable } \}.
$$

\begin{lemma}\label{definresfield}
$\Def^n_\emptyset \bk_{R_0 }  = \Def^n_\emptyset \bk_{R_0 \langle t \rangle }$ for $n=1,2,\dots$.
\end{lemma}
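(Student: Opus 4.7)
The plan is to prove the two inclusions separately. The forward inclusion $\Def^n_\emptyset \bk_{R_0} \subseteq \Def^n_\emptyset \bk_{R_0\langle t\rangle}$ follows from the observation preceding the lemma: each basic predicate $\pi X$ of $\bk_{R_0}$ (with $X \subseteq R^n$ that is $R_0$-definable) equals $\overline{X^*}$ for the $R_0\langle t\rangle$-definable extension $X^* \subseteq R_0\langle t\rangle^n$, which is then a basic predicate of $\bk_{R_0\langle t\rangle}$. This uses $(R_0, V_0) \preceq (R_0\langle t\rangle, U_0)$, which is a direct application of Lemma \ref{elext} part (a) to $(R_0, V_0)$ with $a = t$: indeed $t \models p|_{R_0}$ and $U_0$ is by construction the convex hull of $V_0$ in $R_0\langle t\rangle$.

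For the reverse inclusion, it suffices to show that every basic predicate $\overline{X'}$ of $\bk_{R_0\langle t\rangle}$ (with $X' \subseteq R_0\langle t\rangle^n$ being $R_0\langle t\rangle$-definable) is $\emptyset$-definable in $\bk_{R_0}$. Writing each parameter of $X'$ as $h(t)$ for an $R_0$-definable function $h$, I get $X' = \{x : \psi(x, t)\}$ where $\psi(x, s)$ is an $\mathcal{L}_0$-formula with parameters from $R_0$. The strategy is to replace $t$ by a sufficiently small positive $R_0$-infinitesimal without changing the residue. To this end I set $W(s, x) := \psi(x, 1/s)$ for $s > 0$ (extended arbitrarily to $s \leq 0$), giving an $R_0$-definable $W \subseteq R_0^{1+n}$ with $W(1/t, \cdot) = X'$. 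Theorem \ref{Sigma_1} applied in $(R_0, V_0)$ (which satisfies $\Sigma(1)$ by elementary equivalence with $(R, V)$) yields $\epsilon_0 \in \ma_0^{\geq 0}$ such that $\pi_0 W(\epsilon_0) = \pi_0 W(\epsilon)$ for all $\epsilon \in \ma_0^{\geq \epsilon_0}$.

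By elementarity this $\epsilon_0$ also witnesses the analogous statement inside $(R_0\langle t\rangle, U_0)$. To apply it at $\epsilon = 1/t$, note that $\epsilon_0 \in \ma_0 \subseteq \ma$ implies $1/\epsilon_0 \in R^{>V}$, so $t < 1/\epsilon_0$ and hence $1/t$ is a positive element of the maximal ideal of $U_0$ exceeding $\epsilon_0$. Therefore $\overline{X'} = \overline{W(1/t)} = \overline{W(\epsilon_0)} = \pi_0 W(\epsilon_0)$, the last equality using $\epsilon_0 \in R_0$ and the preamble. Since $W(\epsilon_0) \subseteq R_0^n$ is $R_0$-definable (with parameter $\epsilon_0 \in R_0$), $\pi_0 W(\epsilon_0)$ is by definition a basic predicate in the language of $\bk_{R_0}$, hence $\emptyset$-definable in $\bk_{R_0}$. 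The essential point is the cofinality observation $\epsilon_0 < 1/t$, which lets the $\Sigma(n)$ transfer apply at $\epsilon = 1/t$; once the parameter $t$ has been traded for $\epsilon_0 \in R_0$, the basic-predicate structure of $\bk_{R_0}$ absorbs $\epsilon_0$ at no cost.
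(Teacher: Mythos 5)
Your overall strategy is exactly the paper's: write an $R_0\langle t\rangle$-definable set as the fibre $W_{1/t}$ of an $R_0$-definable family $W\subseteq R^{1+n}$, use $\Sigma(n)$ in $(R_0,V_0)$ to produce $\epsilon_0\in\ma_0^{>0}$ with eventually constant residue fibres, transfer by elementarity, and exploit $\epsilon_0<1/t\in\ma_t$ to trade the parameter $t$ for $\epsilon_0\in R_0$. The reduction to fibres and the cofinality observation are correct and are the heart of the paper's argument.

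The gap is in \emph{where} you transfer to. You move the statement ``$\pi W_{\epsilon_0}=\pi W_{\epsilon}$ for all $\epsilon\in\ma^{>\epsilon_0}$'' from $(R_0,V_0)$ into $(R_0\langle t\rangle, U_0)$, where $U_0$ is the convex hull of $V_0$ in $R_0\langle t\rangle$. But the residue field of that pair is $\bk_0$ (Lemma \ref{resfield} applied one level down), so what you obtain is an equality of subsets of $\bk_0^n$, computed with the residue map of $(R_0\langle t\rangle,U_0)$. The lemma, however, concerns the structure $\bk_{R_0\langle t\rangle}$ on $\bk$, whose basic predicates are the residues $\overline{X}$ computed in $(\rt,U)$; these are subsets of $\bk^n$ and are in general strictly larger than their $\bk_0$-counterparts (moreover it is not clear that $U\cap R_0\langle t\rangle=U_0$, so $(R_0\langle t\rangle,U_0)$ need not even sit inside $(\rt,U)$ as an $\mathcal{L}$-substructure). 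The same confusion affects your justification of the forward inclusion. The repair is exactly the chain the paper uses: $(R_0,V_0)\preceq(R,V)\preceq(\rt,U)$, the second link being Lemmas \ref{elementary extension lemma} and \ref{elext}(a) (this is the content of the remark immediately preceding the lemma). Transferring along this chain puts the $\Sigma(n)$ instance into $(\rt,U)$, where applying it at $\epsilon=1/t\in\ma_t^{>\epsilon_0}$ gives $\overline{W_{1/t}(\rt)}=\overline{W_{\epsilon_0}(\rt)}=\pi W_{\epsilon_0}(R)$, a basic predicate of $\bk_{R_0}$, as desired.
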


\begin{proof} It suffices to show that for $n=1,2,\dots $, 
$$
\{ \overline{X}\colon \, X \subseteq \rt^n  \textrm{ is $R_0 \langle t \rangle$-definable }  \} \subseteq 
\{ \overline{X}\colon \, X \subseteq \rt^n \textrm{ is $R_0$-definable } \}.
$$
Let $X \subseteq R^{1+n}$ be $R_0$-definable.  Since $(R_0 ,V_0 )\models \Sigma_n$, we can find $\epsilon_0 \in \ma_{0}^{>0}$ such that $\pi_0 X_{\epsilon_0 } (R_0 )= \pi_{0} X_{\epsilon } (R_0 )$ for all $\epsilon \in \ma_{0}^{>\epsilon_0 }$, where $\pi_0$ is the residue map of $(R_0 , V_0 )$ and $\ma_0$ is the maximal ideal of $V_0$.  
By Lemma \ref{elext}, 
$\overline{X_{\epsilon_0 } (\rt )}=\overline{X_{\epsilon } (\rt )}$ for all $\epsilon \in \ma_{t}^{>\epsilon_0 }$, and, in particular, $\overline{X_{\frac{1}{t}} (\rt )} = \overline{X_{\epsilon_0 }(\rt )}.$

\end{proof}
For the rest of this section we let $\underline{t}=t_1 , t_2 , \dots   \in \mathcal{R}$ be a Morley sequence in $p$ over $R$, $R\langle \underline{t} \rangle$ is the (elementary) substructure of $\R$ generated by $\underline{t}$ over $R$, $W$ is the convex hull of $V$ in $R \langle \underline{t} \rangle$, and $\overline{W}$ is the residue field of $(R\langle \underline{t}\rangle , W)$.  For $X\subseteq R \langle \underline{t} \rangle^n$, we let $\overline{X}$ be the image of $X\cap W^n$ under the residue map of $(R \langle \underline{t} \rangle , W)$.
\begin{lemma}\label{rt}
$\overline{W}=\bk$ and $\Def_{\emptyset }^n \bk_{R_0 } = \Def_{\emptyset}^n \bk_{R_0 \langle \underline{t} \rangle }$. 
\end{lemma}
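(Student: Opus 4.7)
The strategy is to iterate Lemmas \ref{resfield} and \ref{definresfield} along an elementary chain generated by the Morley sequence. Set $R_k := R\langle t_1,\ldots,t_k\rangle$ and let $W_k$ be the convex hull of $V$ in $R_k$. The first step is to check that
\[
(R,V) \preceq (R_1,W_1) \preceq (R_2,W_2) \preceq \cdots
\]
is an elementary chain in $\mathcal{L}$ whose union is $(R\langle \underline{t}\rangle,W)$. At stage $k$, elementarity of the link $(R_{k-1},W_{k-1}) \preceq (R_k,W_k)$ follows from Lemma \ref{elementary extension lemma} together with Lemma \ref{elext}(a), applied with base $(R_{k-1},W_{k-1})$ and $a=t_k$: the element $t_k$ realizes the analogue of $p$ over $R_{k-1}$ because $p$ is a global $V$-invariant type and $t_k$ was chosen to realize $p|_{Rt_1\cdots t_{k-1}}$. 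The schema $\Sigma(1)$ is preserved up the chain, so Lemmas \ref{resfield} and \ref{definresfield} remain applicable at every stage.

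Given the chain, Lemma \ref{resfield} applied with base $(R_{k-1},W_{k-1})$ and new element $t_k$ yields $\overline{W_k}=\overline{W_{k-1}}$. A straightforward induction starting from $\overline{W_0}=\bk$ gives $\overline{W_k}=\bk$ for all $k$, and since every element of $W$ lies in some $W_k$ and the residue maps along the chain extend one another, I conclude $\overline{W}=\bk$.

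For the equality of definable families, any $\emptyset$-definable set in $\bk_{R_0\langle \underline{t}\rangle}$ is definable using only finitely many of the $t_i$'s, so it suffices to prove by induction on $k$ that
\[
\Def_\emptyset^n\bk_{R_0} = \Def_\emptyset^n\bk_{R_0\langle t_1,\ldots,t_k\rangle}.
\]
The base $k=1$ is exactly Lemma \ref{definresfield}. The inductive step applies Lemma \ref{definresfield} with $(R,V)$ replaced by $(R_{k-1},W_{k-1})$, with $R_0$ replaced by $R_0\langle t_1,\ldots,t_{k-1}\rangle$, with the corresponding convex subring $W_{k-1}\cap R_0\langle t_1,\ldots,t_{k-1}\rangle$, and with $t=t_k$, to transfer definability one step at a time.

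The principal obstacle is bookkeeping: at each inductive step one must verify that all structural hypotheses remain in force, most notably that the smaller base $(R_0\langle t_1,\ldots,t_{k-1}\rangle,\,W_{k-1}\cap R_0\langle t_1,\ldots,t_{k-1}\rangle)$ is an $\mathcal{L}$-elementary substructure of $(R_{k-1},W_{k-1})$. The $\mathcal{L}_0$-part of this propagates because $\dcl$-closed subsets of $\mathcal{R}$ are $\mathcal{L}_0$-elementary substructures of $\mathcal{R}$; the compatibility of the convex subrings is exactly what the first half of the present lemma guarantees, which is why the residue-field equality $\overline{W}=\bk$ is settled first. Preservation of $\Sigma(1)$ along the chain is automatic from its first-order schematic character.
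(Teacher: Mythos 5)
Your argument is correct and is essentially the paper's own proof, which consists of the single sentence ``This follows inductively from Lemma \ref{resfield} and Lemma \ref{definresfield}'': you have simply made explicit the elementary chain $(R,V)\preceq (R_1,W_1)\preceq\cdots$ (via Lemmas \ref{elementary extension lemma} and \ref{elext}(a)) and the bookkeeping of hypotheses that the paper leaves implicit. No discrepancy in approach.
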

\begin{proof}
This follows inductively from Lemma \ref{resfield} and Lemma \ref{definresfield}.
\end{proof}

\begin{lemma}\label{structure}
Let $\widetilde{\bk}$ be the structure $\bk$ expanded by predicates for all sets $\pi (X\cap R^n )$, where $X \subseteq R \langle \underline{t} 
\rangle^n$ is $R_0 \langle \underline{t} \rangle$-definable.  Then, for $n=1,2,\dots$, $$\Def_{\emptyset }^n (\widetilde{\bk}) = \{  \pi (X\cap R^n ) : X\subseteq R\langle \underline{t} \rangle^n \mbox{ is $R_0 \langle \underline{t} \rangle$-definable }  \}.$$
\end{lemma}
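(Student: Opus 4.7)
The inclusion $\supseteq$ is immediate from the definition of $\widetilde{\bk}$. For $\subseteq$, writing $\mathcal{P}^n$ for the right-hand side, I would show by induction on formula complexity that every $\emptyset$-definable set of $\widetilde{\bk}$ lies in $\mathcal{P}^n$. This reduces to verifying that $\mathcal{P}^n$ contains the atomic $\emptyset$-definable subsets of $\bk^n$ coming from the base language of $\bk$ (graphs of $+,\cdot$, order, equality) and is closed under Boolean operations and coordinate projections. The atomic cases are routine: the graph of addition equals $\pi(X\cap R^{3})$ for $X=\{(x,y,z)\in R\langle\underline{t}\rangle^{3}:x+y=z\}$, weak order is realized analogously, and strict order/disequality follow from the Boolean closure established below.

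The technical core rests on the fact that the valuation data of $(R,V)$ is trace-expressible inside $R$ using the Morley sequence: for any two distinct $s_1,s_2\in\underline{t}$,
\[
V=\{x\in R:|x|<s_1\}\quad\text{and}\quad\ma=\{x\in R:|x|<1/s_2\},
\]
since each $s_i$ realizes $p|_R$ and hence $s_i>V$, $s_i<R^{>V}$. Combined with Theorem \ref{poizat} and Corollary \ref{separation}, which supplies an unlimited stock of $R$-separated sub-tuples of $\underline{t}$ of common type over $R$, this enables one to translate any ``$\exists y\in R$'' or ``$\forall y\in R$'' appearing inside a trace formula into an unrestricted $R\langle\underline{t}\rangle$-quantifier, at the cost of duplicating the parameter block, in direct analogy with the inductive argument already carried out in the proof of Lemma \ref{elementary extension lemma}.

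Closure under projection follows from
\[
p_n^{n+1}\bigl(\pi(X\cap R^{n+1})\bigr)=\pi\bigl(\{w\in V^n:\exists y\in V,\,(w,y)\in X\}\bigr),
\]
the inner set being a trace on $R^n$ of a formula of the form ``$|w_i|<s_1\wedge\exists y\in R(|y|<s_1\wedge\phi(w,y,\underline{s},\underline{r}))$''; a single application of Theorem \ref{poizat} with a fresh separated block $(\underline{s}',s_1')$ drawn from $\underline{t}$ exhibits this as the trace of an $R_0\langle\underline{t}\rangle$-definable subset of $R\langle\underline{t}\rangle^n$. Closure under complement uses the identity
\[
\bk^n\setminus\pi(X\cap R^n)=\pi\bigl(\{w\in V^n:(w+\ma^n)\cap X=\emptyset\}\bigr),
\]
which holds because $\pi^{-1}(\bar w)\cap V^n=w+\ma^n$ for any representative $w$; unfolding the right-hand side using the trace descriptions of $V$ and $\ma$ produces $n$ nested ``$\forall\delta_i\in R$'' quantifiers, each removed in turn by the dual form of Theorem \ref{poizat} (obtained by negating its conclusion) with a fresh $R$-separated block from $\underline{t}$. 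Intersection then follows by De Morgan, and union is direct via $\pi((X_1\cup X_2)\cap R^n)=\pi(X_1\cap R^n)\cup\pi(X_2\cap R^n)$.

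The main obstacle is the bookkeeping of separated parameter blocks when iterating Poizat's theorem in the complement step, where eliminating each of the $n$ universal quantifiers doubles the parameter tuple; one must check that all the required separated tuples can be drawn from within the single infinite Morley sequence $\underline{t}$. This is guaranteed by Corollary \ref{separation} together with the indiscernibility of $\underline{t}$, which jointly ensure that any finite collection of disjoint blocks of $\underline{t}$ of a common length are pairwise $R$-separated and share the same type over $R$. Conceptually this is the same iteration already handled in Lemma \ref{elementary extension lemma}, and it goes through here without new difficulty.
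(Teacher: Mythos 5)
Your proposal is correct and follows essentially the same route as the paper's proof: union directly, complement via the identity $(\pi (X\cap R^n))^c=\pi\{w\in V^n:\,(w+\ma^n)\cap X=\emptyset\}$ together with the trace descriptions $V=[-t_1,t_1]\cap R$ and $\ma=[-\tfrac{1}{t_1},\tfrac{1}{t_1}]\cap R$ and (iterated) applications of Theorem \ref{poizat} with fresh $R$-separated blocks drawn from the Morley sequence, and projection by first cutting down to $V^n=[-t_1,t_1]^n\cap R^n$ and applying Theorem \ref{poizat} once more. The additional details you make explicit (the atomic predicates of $\bk$ and the bookkeeping of separated parameter blocks, which is the same as in Lemma \ref{elementary extension lemma}) are left implicit in the paper but are entirely consistent with its argument.
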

\begin{proof}
Let $X,Y \subseteq R\langle \underline{t} \rangle^n$ be $R_0 \langle \underline{t} \rangle$-definable.
Clearly,
$$\pi (X\cap R^n ) \cup \pi (Y\cap R^n ) =\pi ((X \cup Y)\cap R^n ).$$ Further, $$(\pi (X\cap R^n ))^c = \pi \{  z \in R^n :  \; \text{d}(z,x) >\ma \mbox{ for all }x\in X\cap R^n  \}.$$
Since $\ma = [-\frac{1}{t} ,\frac{1}{t}] \cap R$, Theorem \ref{poizat} yields that the set on the right-hand side is the residue of the trace of an $R_0 \langle \underline{t}\rangle$-definable set.

Finally, to see that $$p^{n}_{n-1} \pi (X\cap R^n ) = \pi ( Y\cap R^{n-1} ),$$ for some $R_0 \langle \underline{t} \rangle$-definable $Y \subseteq R \langle \underline{t} \rangle^{n-1}$, note that
$$p^{n}_{n-1} \pi (X\cap R^n ) =p^{n}_{n-1} \pi (X \cap V^n ) =\pi p^{n}_{n-1} (X \cap V^n ).$$ Furthermore, $p^{n}_{n-1}(X\cap V^n ) = p^{n}_{n-1}((X\cap [-t,t]^n ) \cap R^n )$, hence this set is the trace of an $R_0 \langle \underline{t} \rangle$-definable set by Theorem \ref{poizat}.

\end{proof}
For $X,Y\subseteq \bk^n$, we denote by $\interior{X}$ the interior of $X$ in $\bk^n$, and by $X\bigtriangleup Y$ the symmetric difference of $X$ and $Y$.
The proof of Theorem \ref{defsets} uses the below stated fact which is proved in \cite{thesispaper} (Lemma 4.2, p. 129).

\begin{fact}\label{oldlemma}
Let $S_1 $ be a weakly o-minimal structure and $S_2$ an o-minimal structure on the same underlying ordered set $S$.  Suppose for every $n$ and for every $X_1 \in \Def^n (S_1 )$ there is $X_2 \in \Def^n (S_2 )$ such that $X_1 \bigtriangleup X_2$ has empty interior in $S^n$.  Then $\Def^n (S_1 )\subseteq \Def^n (S_2 )$ for all $n$.
\end{fact}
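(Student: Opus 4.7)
The plan is to proceed by induction on $n$.

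\emph{Base case $n=1$.} By weak o-minimality of $S_1$, the set $X_1$ is a finite union of convex subsets of $S$; by o-minimality of $S_2$, $X_2$ is a finite union of intervals of $S$. Consequently $X_1\setminus X_2$ and $X_2\setminus X_1$ are each finite unions of convex subsets of $S$ (a convex set minus finitely many intervals is a finite union of convex sets). Every such convex summand is contained in $X_1\bigtriangleup X_2$ and thus has empty interior; since $S$ is densely ordered, any convex subset with empty interior is empty or a singleton. Therefore $X_1\bigtriangleup X_2$ is a finite subset of $S$, which is $S_2$-definable as a finite disjunction of equalities using its elements as parameters from $S$. Hence $X_1=X_2\bigtriangleup(X_1\bigtriangleup X_2)\in\Def^1(S_2)$.

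\emph{Inductive step.} Assume the conclusion in dimensions below $n$, and let $X_1\in\Def^n(S_1)$ with $S_2$-approximation $X_2\in\Def^n(S_2)$. I would first apply cell decomposition in $S_2$ compatible with $X_2$ to partition $S^n$ into $S_2$-definable cells, each either contained in or disjoint from $X_2$. The task reduces to showing that $X_1\cap C$ is $S_2$-definable for each open top-dimensional cell $C$. The key intermediate step is a Fubini-type claim: in both $S_1$ (via the weakly o-minimal cell decomposition \`a la Macpherson--Marker--Steinhorn) and $S_2$, a definable subset $Y\subseteq S^n$ of empty interior has fibers $Y_a\subseteq S^{n-1}$ of empty interior for every $a\in S$ outside an exceptional definable set $E\subseteq S$ of empty interior (this follows from the dimension formulas for definable sets, since $\dim Y<n$ forces $\dim\{a:\dim Y_a=n-1\}<1$). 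Applied to $Y=X_1\bigtriangleup X_2$, this yields, for each $a\notin E$, that $X_1^a$ is $S_2$-approximated by $X_2^a$ with empty-interior symmetric difference; by the induction hypothesis, $X_1^a\in\Def^{n-1}(S_2)$. A uniformity argument based on o-minimal cell decomposition in $S_2$ then glues these fibers into a single $S_2$-definable set agreeing with $X_1$ on the non-exceptional fibers. The piece of $X_1$ lying over $E$ sits inside a lower-dimensional $S_2$-definable subset of $S^n$ (using the base case to get $E\in\Def^1(S_2)$) and can be analyzed by re-coordinatizing $E$ as (a subset of) $S^{n-1}$ and applying the induction hypothesis.

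The main obstacle is the uniformity step: upgrading the pointwise $S_2$-definability of the fibers $X_1^a$ to a uniform $S_2$-definable description of the whole family $\{(a,b):b\in X_1^a\}$. This will require exploiting o-minimal cell decomposition and definable choice in $S_2$, and is the technical heart of the inductive step; the Fubini-type claim itself is the other load-bearing ingredient, as it is what allows the inductive hypothesis to be invoked at dimension $n-1$ at all.
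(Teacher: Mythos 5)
First, a point of comparison: the paper does not actually prove this Fact --- it is quoted from Lemma 4.2 of \cite{thesispaper} --- so there is no in-paper argument to measure yours against, and your proposal must stand on its own. Your base case $n=1$ is correct and is the standard argument. The inductive step, however, contains two genuine gaps. The first is in your Fubini-type claim: you apply it to $Y=X_1\bigtriangleup X_2$, but this set is definable in neither $S_1$ nor $S_2$ (it mixes an $S_1$-definable set with an $S_2$-definable one), so neither structure's dimension theory or cell decomposition says anything about it; and for sets that are not definable in a tame structure the claim is simply false --- a union of fiberwise boxes $\{a\}\times B_a$ over \emph{all} base points $a$ in an interval need not contain a box in $S^n$ if the $B_a$ vary wildly, so $\mathrm{int}(Y)=\emptyset$ does not force the fibers to have empty interior off a small set. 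The second, which you yourself flag as ``the technical heart,'' is exactly where the argument breaks: knowing that each fiber $X_1^a$ lies in $\Def^{n-1}(S_2)$ for $a$ outside an exceptional set produces no $S_2$-definable \emph{family} to which cell decomposition or definable choice in $S_2$ could be applied; uniform definability of $\{X_1^a\}_a$ is precisely the statement being proved, so invoking those tools at this point is circular. Pointwise definability of fibers does not imply definability of the total set, and no mechanism for uniformity is supplied. (A smaller issue: Macpherson--Marker--Steinhorn cell decomposition is a theorem about weakly o-minimal \emph{theories}, whereas here only the single structure $S_1$ is assumed weakly o-minimal.)

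A further indication that a different route is needed is the Remark immediately following the Fact in the paper: the cited proof shows the stronger statement that $X_1$ is definable in $S_2$ \emph{over the same parameters as} $X_2$, i.e., it produces an explicit $S_2$-formula for $X_1$ in terms of $X_2$, rather than assembling pointwise data fiber by fiber. As it stands, your proposal establishes only the case $n=1$.
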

We remark here that the proof of Fact \ref{oldlemma} shows that if $X_1 \in \Def^n (S_1 )$ and $X_2 \in \Def^n (S_2 )$ is such that $\interior{X_1 \bigtriangleup X_2 }=\emptyset$, then 
$X_1 $ can be defined in the structure $S_2$ over the same parameters as $X_2$.

\begin{theorem}\label{defsets}
$\Def_{\emptyset }^n \bk_{R_0 } = \Def_{\emptyset}^n \bk_{(R_0 ,V_0 )}$ for $n=1,2,\dots$.
\end{theorem}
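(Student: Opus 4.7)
The plan is to establish the non-trivial inclusion $\Def_{\emptyset}^n \bk_{(R_0, V_0)} \subseteq \Def_{\emptyset}^n \bk_{R_0}$ by sandwiching both sides between the auxiliary structure $\widetilde{\bk}$ from Lemma \ref{structure} and the larger residue structure $\bk_{R_0 \langle \underline{t} \rangle}$, then collapsing the sandwich via Fact \ref{oldlemma} and Lemma \ref{rt}.

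First I would observe that since $V = [-t_1, t_1] \cap R$ for any $t_1 \models p|_R$, every $(R_0, V_0)$-definable $X \subseteq R^n$ can be written as $X' \cap R^n$ for some $R_0 \langle \underline{t} \rangle$-definable $X' \subseteq R \langle \underline{t} \rangle^n$ (finitely many $t_i$'s suffice to absorb nested occurrences of the predicate $V$). Consequently $\pi X = \pi(X' \cap R^n)$ lies in $\Def_\emptyset^n \widetilde{\bk}$ by Lemma \ref{structure}, giving $\Def_\emptyset^n \bk_{(R_0, V_0)} \subseteq \Def_\emptyset^n \widetilde{\bk}$.

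Next I would verify the hypotheses of Fact \ref{oldlemma} with $S_1 = \widetilde{\bk}$ and $S_2 = \bk_{R_0 \langle \underline{t} \rangle}$. The o-minimality of $S_2$ follows from the o-minimality of $\bk_R$ together with $(R, V) \preceq (R \langle \underline{t} \rangle, W)$ (via Lemmas \ref{elementary extension lemma} and \ref{elext}), which transports $\Sigma(1)$ upward to $(R\langle\underline{t}\rangle,W)$. Weak o-minimality of $S_1$ reduces to the one-variable case: by o-minimal cell decomposition in $R \langle \underline{t} \rangle$, any $R_0 \langle \underline{t} \rangle$-definable subset of $R \langle \underline{t} \rangle$ is a finite union of points and intervals, and $\pi$ sends the trace of each such piece to a convex subset of $\bk$.

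For the symmetric-difference hypothesis I would pair each $X_1 = \pi(X \cap R^n) \in \Def^n \widetilde{\bk}$ with $X_2 = \pi' X \in \Def^n \bk_{R_0 \langle \underline{t} \rangle}$, where $\pi'$ is the residue map of $(R \langle \underline{t} \rangle, W)$. Since $V \subseteq W$ and $\pi = \pi'|_V$, the inclusion $X_1 \subseteq X_2$ is automatic, so it suffices to show that $\pi' X \setminus \pi(X \cap R^n)$ has empty interior in $\bk^n$. Decomposing $X$ into o-minimal cells in $R \langle \underline{t} \rangle^n$, lower-dimensional cells contribute only sets of empty interior to $\pi' X$. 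For each top-dimensional open cell $C$, a witness $c \in \pi' C \setminus \pi(C \cap R^n)$ provides $w \in C \cap W^n$ and a standard representative $v \in V^n$ with $w - v \in \mathfrak{m}_t^n$ but $v \notin C$; then $v$ must sit in one of the lower-dimensional cells of the ambient decomposition of $R \langle \underline{t} \rangle^n$, so $c = \pi(v)$ lies in $\pi'$ of a lower-dimensional piece. Fact \ref{oldlemma} (with the remark about parameters following it) then yields $\Def_\emptyset^n \widetilde{\bk} \subseteq \Def_\emptyset^n \bk_{R_0 \langle \underline{t} \rangle}$, and Lemma \ref{rt} closes the chain back to $\Def_\emptyset^n \bk_{R_0}$.

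The principal difficulty is the symmetric-difference step: one must confirm that the $\pi'$-image of a cell of dimension $k < n$ genuinely has empty interior in $\bk^n$. Cells are graph-like over lower-dimensional cells, so their $\pi'$-images ought to be parameterized by $k$ coordinates in $\bk$; but because the defining functions need not send $W^k$ into $W$, one likely has to first replace $X$ by its intersection with a bounded box in $W^n$ and then invoke $\Sigma(n)$ for $(R \langle \underline{t} \rangle, W)$ through Theorem \ref{Sigma_1}(c) to obtain the required dimension bound. This is the technical heart of the argument.
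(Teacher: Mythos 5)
Your overall architecture is the paper's: represent each $X\in\Def_{\emptyset}^n\bk_{(R_0,V_0)}$ as $\pi(Y\cap R^n)$ for an $R_0\langle\underline{t}\rangle$-definable $Y$ via Lemma \ref{structure}, compare with $\overline{Y}\in\Def_{\emptyset}^n\bk_{R_0\langle\underline{t}\rangle}$, and close the loop with Fact \ref{oldlemma} and Lemma \ref{rt}. The problem is precisely the step you flag as the technical heart, and your proposed handling of it does not go through. First, the inference ``$v\notin C$, hence $v$ sits in a lower-dimensional cell of the ambient decomposition'' is false: since $c\notin\pi(X\cap R^n)$, \emph{no} representative of $c$ in $V^n$ lies in $X$, so $v$ typically lies in an \emph{open} cell of the complement $X^c$, not in a lower-dimensional cell. (One could try to repair this by noting that the segment from $v$ to the infinitesimally close point $w\in C$ must cross a cell boundary, so that $c$ lies in the residue of a set of dimension $<n$ --- but that is an additional argument you have not supplied.) Second, even after such a repair, everything rests on the claim that the residue of a definable set of dimension $k<n$ has empty interior in $\bk^n$. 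Your suggestion to extract this from $\Sigma(n)$ via Theorem \ref{Sigma_1}(c) does not work as stated: that theorem only says every closed $\bk_R$-definable set is a residue $\pi X$; it gives no inequality $\dim\pi X\le\dim X$. So the decisive inequality is left unproved.

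The paper avoids all of this with a one-line reduction: since every point of $\bk^n$ has a representative in $V^n$, one has $[\pi(Y\cap R^n)]^c\subseteq\pi(Y^c\cap R^n)\subseteq\overline{Y^c}$, so any box $B\subseteq\overline{Y}\setminus\pi(Y\cap R^n)$ lies in $\overline{Y}\cap\overline{Y^c}$; and $\interior{\overline{Y}\cap\overline{Y^c}}=\emptyset$ is exactly Corollary 3.6 of \cite{thesispaper} applied to $(R\langle\underline{t}\rangle,W)$ (which is legitimate because $(R,V)\preceq(R\langle\underline{t}\rangle,W)$, so $\Sigma(1)$ transfers). That cited corollary is where the real content --- the dimension/interior control you are missing --- lives. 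Your write-up either needs to import that result (or the dimension inequality $\dim\pi X\le\dim X$ from \cite{jalou}) explicitly, or prove it; as it stands the symmetric-difference step is a genuine gap.
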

\begin{proof} Let $n\geq 1$.  Clearly, $\Def_{\emptyset}^n \bk_{R_0} \subseteq 
\Def_{\emptyset}^n \bk_{(R_0 ,V_0 ) }$.  By Lemma 4.2 in \cite{thesispaper}, and by Lemma \ref{rt}, to show the other inclusion it suffices to find for every $X \in \Def_{\emptyset}^n \bk_{(R_0 ,V_0 )}$ a set $Y \in \Def_{\emptyset}^n \bk_{R_0 \langle \underline{t}\rangle }$ so that $\interior{X \bigtriangleup Y}=\emptyset$.

So let
$X \in \Def_{\emptyset}^n  \bk_{(R_0 ,V_0 )}$.  
Then, by Lemma \ref{structure}, we can find $Y \subseteq R\langle \underline{t} \rangle^n$ which is $R_0 \langle \underline{t} \rangle$-definable and so that $X = \pi (Y \cap R^n )$.  
We claim that $$\interior{ \pi (Y \cap R^n ) \bigtriangleup \overline{Y} } = \emptyset .$$  First note that $\pi (Y \cap R^n ) \subseteq \overline{Y}$.  So assume towards a contradiction that $B \subseteq \overline{Y} \setminus \pi (Y \cap R^n )$, where $B\subseteq \bk^n$ is a (closed) box.  Then $$B \subseteq [\pi (Y \cap R^n )]^c \subseteq \pi ((Y \cap R^n )^c ) =\pi (Y^c \cap R^n ) \subseteq \overline{Y^c }.$$  So $B\subseteq \overline{Y} \cap \overline{Y^c }$.
But by Lemma \ref{rt}, and by Corollary 3.6 in \cite{thesispaper}, applied to the structure $(R\langle \underline{t} \rangle ,W)$, $$\interior{\overline{Y} \cap \overline{Y^c }}= \emptyset ,$$ a contradiction.  
\end{proof}



\begin{definition} \label{stabembset} Let $M$ be a saturated structure and $X$ a subset of $M^{eq}$ definable over $C$.   Then we say that $X$ is {\em stably embedded\/} if every  definable subset of $X^n$ is definable with parameters from $X$ together with $C$. 
\end{definition}

\noindent If $M$ is not saturated, one has to show in addition that parameters from $X$ can be chosen in a uniform fashion.

\begin{definition} \label{stabembstructure}
 We say that a structure $(X, \dots)$ is stably embedded in $M$, if $X$ is a $\emptyset$-definable subset of $M^{eq}$, the relations and functions that comprise $(X, \dots)$ are $\emptyset$-definable in $M^{eq}$, 
$X$ is stably embedded as a set, 
and each subset of $X^n$ which is $\emptyset$-definable in $M^{eq}$ is $\emptyset$-definable in the structure $(X, \dots)$.
\end{definition}
Note that if $(X,\dots )$ is stably embedded in $M$, then each subset of $X^n$ which is definable in $M^{eq}$, is definable in $(X,\dots )$.  For if $Y \subseteq X^n$ is definable in $M^{eq}$, then, since $X$ is stably embedded, $Y$ is defined by a formula $\phi ( a,y )$, where $a \in X^m$ and $\phi (x,y)$ is a formula in the language of $M^{eq}$ over $\emptyset$.  Hence we can find a formula $\theta (x,y)$ in the language of $(X,\dots )$ over $\emptyset$ so that $\theta$ and $\phi$ define the same subset of $\bk^{m+n}$, and so $X=\theta_{a}(\bk)$.

In \cite{haon} the following fact is proved (Corollary 2.3, p.74).
\begin{fact}\label{alfasaf}
Let $\mathcal{N}$ be a structure with uniform finiteness, and let $\mathcal{S}$ be a sort in $\mathcal{N}^{eq}$ such that the $\mathcal{N}$-induced structure on $\mathcal{S}$ is o-minimal.  Then $\mathcal{S}$ is stably embedded in $\mathcal{N}$.
\end{fact}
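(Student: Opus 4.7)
The plan is to show stable embeddedness directly: for every $\mathcal{N}$-definable subset $X \subseteq \mathcal{S}^n$, I want to produce a formula in the induced language on $\mathcal{S}$, with parameters drawn from $\mathcal{S}$, that defines $X$. The proof should go by induction on $n$, using o-minimality of the induced structure to reduce to boundary data and using uniform finiteness to make that reduction uniform in the parameter.

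For the base case $n=1$: let $X = \phi(\mathcal{S},a)$ with $a$ a tuple from $\mathcal{N}^{eq}$. Being $\mathcal{N}$-definable, $X$ is definable in the induced structure on $\mathcal{S}$ (possibly with non-$\mathcal{S}$ parameters through the formula $\phi$). By o-minimality of the induced structure, $X$ is a finite Boolean combination of singletons and open intervals; let $c_1 < \cdots < c_r$ list its endpoints and isolated points. These endpoints lie in $\mathcal{S}$. The combinatorial data recording whether each $c_i$ and each gap $(c_i,c_{i+1})$ lies in $X$ is finite, and together with $\bar c$ it reconstructs $X$ using only the order on $\mathcal{S}$ (which belongs to the induced language). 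So $X$ is $\emptyset$-definable in $(\mathcal{S},\ldots)$ from $\bar c \in \mathcal{S}^r$, as desired.

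For the inductive step $n>1$: apply o-minimal cell decomposition in the induced structure to the family $\{\phi(\mathcal{S}^n, a') : a' \in \mathcal{N}^m\}$. Uniform finiteness in $\mathcal{N}$ ensures that the number of cells in $\phi(\mathcal{S}^n, a)$ is bounded uniformly as $a$ varies, so a genuine uniform cell decomposition exists in the induced language. Each cell constituting $X$ is then specified by (in)equalities involving continuous functions $g:\mathcal{S}^{n-1}\to \mathcal{S}$ whose graphs are $\mathcal{N}$-definable subsets of $\mathcal{S}^n$. Passing to sections (fixing the last coordinate or using the fibers over $\mathcal{S}^{n-1}$) and applying the inductive hypothesis to the lower-dimensional slices and graphs, the bounding data is definable over parameters from $\mathcal{S}$. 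Assembling the cells back together yields a definition of $X$ in the induced language with parameters from $\mathcal{S}$.

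The hardest step I expect will be the uniformity in the inductive step: carrying out cell decomposition for the entire family $\{\phi(\mathcal{S}^n, a')\}_{a'\in\mathcal{N}^m}$ so that the bounding functions are themselves uniformly definable from $a$ together with parameters that ultimately live in $\mathcal{S}$. This is precisely the point where \emph{uniform finiteness} in $\mathcal{N}$ is indispensable: without it, the number of cells, or the complexity of their description, could vary unboundedly with $a$ and block the passage from $\mathcal{N}$-parameters to $\mathcal{S}$-parameters. A secondary technical point is to verify that the inductive hypothesis applies cleanly to the graphs of the bounding functions, which may require treating them as $\mathcal{N}$-definable subsets of $\mathcal{S}^{n}$ and invoking the theorem at the same dimension but on simpler (lower-dimensional in a definable sense) objects, or to reformulate the induction in terms of complexity of cells rather than raw dimension.
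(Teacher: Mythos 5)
The paper does not actually prove this statement: it is quoted verbatim as Corollary 2.3 of Hasson and Onshuus \cite{haon}, so there is no in-paper argument to compare yours against. Judging the proposal on its own terms, your base case $n=1$ is the standard and correct argument: an $\mathcal{N}$-definable $X\subseteq\mathcal{S}$ is, by o-minimality of the induced structure, a finite union of points and intervals, and it is recovered from its finitely many boundary points (which lie in $\mathcal{S}$) using only the order.

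The inductive step, however, has a genuine gap, and it sits exactly where you yourself locate the main difficulty. First, cell decomposition in the induced o-minimal structure on $\mathcal{S}$ is uniform only over families parametrized by powers of $\mathcal{S}$; your family $\{\phi(\mathcal{S}^n,a'): a'\in\mathcal{N}^m\}$ is parametrized externally, so ``uniform cell decomposition'' cannot simply be invoked for it. Uniform finiteness does bound the size of uniformly definable finite sets, hence the number of endpoints of the one-variable fibers $X_{\bar s}\subseteq\mathcal{S}$ as $\bar s$ ranges over $\mathcal{S}^{n-1}$, and that is the right way to use the hypothesis; but it does not by itself manufacture a decomposition whose defining data lives over $\mathcal{S}$. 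Second, and more seriously, the induction does not close: the bounding functions you extract are functions $g\colon C\to\mathcal{S}$ with $C\subseteq\mathcal{S}^{n-1}$, so their graphs are subsets of $\mathcal{S}^{n}$ --- the same arity as $X$ --- and an inductive hypothesis stated for subsets of $\mathcal{S}^{n-1}$ does not apply to them. You flag this but only gesture at a fix (``reformulate the induction in terms of complexity of cells''); without an actual replacement induction (e.g.\ a separate argument that an $\mathcal{N}$-definable function $\mathcal{S}^{n-1}\to\mathcal{S}$ has graph definable over $\mathcal{S}$-parameters, or an induction on the dimension of the set rather than on the ambient power), the central step is missing. A further point to watch: ``definable in the induced language with parameters from $\mathcal{S}$'' is vacuous as stated, since the atomic relations of the induced structure are themselves arbitrary $\mathcal{N}$-definable sets carrying hidden $\mathcal{N}$-parameters; what stable embeddedness requires is $\mathcal{N}$-definability over $\mathcal{S}$ together with the base set. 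Your base case delivers this (it uses only the order and the endpoints), but your inductive step delivers it only conditionally on the unproven claim about the graphs.
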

Theorem \ref{defsets} and Fact \ref{alfasaf} yield the following corollary.

\begin{corollary}\label{stabemb}
The set $\boldsymbol{k}$ is stably embedded in $(R,V)$.
\end{corollary}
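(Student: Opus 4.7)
The plan is to invoke Fact \ref{alfasaf} with $\mathcal{N} = (R,V)$ and $\mathcal{S} = \bk$; three ingredients must be assembled, namely that $\bk$ is a $\emptyset$-definable sort of $(R,V)^{eq}$, that the $(R,V)$-induced structure on $\bk$ is o-minimal, and that $(R,V)$ has uniform finiteness.

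First, I would observe that $\bk = V/\ma$ is by construction a $\emptyset$-definable sort of $(R,V)^{eq}$, and that the $(R,V)$-induced structure on this sort is exactly $\bk_{(R,V)}$: a subset of $\bk^n$ is $(R,V)$-definable (working in $(R,V)^{eq}$) if and only if it has the form $\pi(X)$ for some $(R,V)$-definable $X \subseteq V^n$, and such sets $\pi(X)$ are precisely the basic predicates of $\bk_{(R,V)}$.

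Second, to verify o-minimality of $\bk_{(R,V)}$, I would apply Theorem \ref{defsets} with $R_0 := R$, yielding $\Def^n_\emptyset \bk_R = \Def^n_\emptyset \bk_{(R,V)}$ for every $n$. Since the standing assumption $(R,V) \models \Sigma(1)$ gives, via Theorem \ref{Sigma_1}, that $\bk_R$ is o-minimal, this equality transfers o-minimality to $\bk_{(R,V)}$. Parameter handling is automatic: every element of $\bk$ is the residue of some $v \in V$, the singleton $\{v\}$ is $R$-definable, so parameters from $\bk$ are already captured by $\emptyset$-definable predicates in both $\bk_R$ and $\bk_{(R,V)}$.

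Third, Fact \ref{alfasaf} also requires $(R,V)$ to have uniform finiteness; I would take this from the fact that $(R,V)$ is weakly o-minimal (due to Baisalov--Poizat), since weakly o-minimal structures enjoy uniform finiteness. With these three ingredients in hand, Fact \ref{alfasaf} yields that $\bk$ is stably embedded in $(R,V)$. The step requiring the most care is the identification made in the first paragraph (one must check that imaginary parameters from $(R,V)^{eq}$ do not enlarge the induced structure on $\bk$ beyond what is already present in $\bk_{(R,V)}$), but since all the substantive work has been done in Theorem \ref{defsets}, the corollary itself is a short deduction.
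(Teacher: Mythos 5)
Your proposal is correct and follows the paper's own route exactly: the paper derives Corollary \ref{stabemb} by combining Theorem \ref{defsets} (taking $R_0 = R$, so that the induced structure $\bk_{(R,V)}$ coincides with the o-minimal $\bk_R$) with Fact \ref{alfasaf}. The extra details you supply --- the identification of the induced structure on the sort $\bk$ and the uniform finiteness of $(R,V)$ via weak o-minimality --- are the implicit ingredients the paper leaves to the reader, so there is nothing to add.
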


\begin{corollary}\label{stabembstr}
 The structure $\bk_{R_0}$ is stably embedded in $(R,V)$ when $(R,V)$ is viewed  as an $\mathcal{L}_{R_0}$-structure.
\end{corollary}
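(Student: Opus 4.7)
The plan is to verify the four clauses of Definition \ref{stabembstructure} for the set $X = \bk$ equipped with its $\bk_{R_0}$-structure, taking $M = (R,V)$ viewed as an $\mathcal{L}_{R_0}$-structure.

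First I would dispose of the routine clauses. The set $\bk = V/\ma$ is $\emptyset$-definable in $(R,V)^{eq}$ straight from the predicate $V$. Each basic relation $\pi X$ of $\bk_{R_0}$, with $X \subseteq R^n$ an $R_0$-definable subset of $R$, is $\emptyset$-definable in the $\mathcal{L}_{R_0}$-structure $(R,V)^{eq}$, since $\mathcal{L}_{R_0}$ supplies a constant for every element of $R_0$. For set-theoretic stable embeddedness, observe that the collection of subsets of $\bk^n$ definable in $(R,V)^{eq}$ with parameters does not depend on whether we work in $\mathcal{L}$ or in $\mathcal{L}_{R_0}$, since adding constants for elements of $R_0$ only relocates parameters between ``free'' and ``named.'' Hence Corollary \ref{stabemb} already delivers the required stable embeddedness of $\bk$ as a set in the $\mathcal{L}_{R_0}$-structure.

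The real content lies in the remaining clause: every $A \subseteq \bk^n$ that is $\emptyset$-definable in the $\mathcal{L}_{R_0}$-structure $(R,V)^{eq}$ must lie in $\Def_{\emptyset}^n \bk_{R_0}$. I would write such an $A$ as $\pi X$, where $X \subseteq R^n$ is $\emptyset$-definable in the $\mathcal{L}_{R_0}$-structure $(R,V)$, equivalently $R_0$-definable in the $\mathcal{L}$-structure $(R,V)$. Under the paper's convention that $M$-definable allows parameters from $M$, this $X$ is also $(R_0, V_0)$-definable by virtue of $(R_0, V_0) \preceq (R,V)$, so $\pi X$ is one of the basic predicates of $\bk_{(R_0, V_0)}$ and hence lies in $\Def_{\emptyset}^n \bk_{(R_0, V_0)}$. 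A direct application of Theorem \ref{defsets} then transfers $A$ into $\Def_{\emptyset}^n \bk_{R_0}$, completing the verification.

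I do not foresee a serious obstacle: the genuinely nontrivial work has already been performed by Theorem \ref{defsets}, which converts $(R_0, V_0)$-definable residue sets into $R_0$-definable ones, and by Corollary \ref{stabemb} (via Fact \ref{alfasaf}), which supplies the set-theoretic stable embeddedness of $\bk$. The only minor bookkeeping subtlety is reconciling ``$R_0$-definable in $(R,V)$'' with ``$(R_0, V_0)$-definable,'' but these coincide via $(R_0, V_0) \preceq (R,V)$ together with the paper's definability convention.
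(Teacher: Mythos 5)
Your proposal is correct and follows essentially the same route as the paper: the routine clauses plus Corollary \ref{stabemb} for set-theoretic stable embeddedness, then writing a $\emptyset$-definable subset of $\bk^n$ as $\pi Y$ with $Y$ definable over $R_0$ in $(R,V)$ and invoking Theorem \ref{defsets}. The only cosmetic difference is your explicit detour through $\bk_{(R_0,V_0)}$ and the elementarity $(R_0,V_0)\preceq (R,V)$, which the paper leaves implicit in its notation.
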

\begin{proof}
The underlying set and the basic relations of $\bk_{R_0 }$ are definable over $R_0$ in $(R,V)^{eq}$.  By Corollary \ref{stabemb},  the underlying set of $\bk_{R_0}$ is stably embedded in $(R,V)$, hence also stably embedded in $(R,V)$ when viewed as an $\mathcal{L}_{R_0}$-structure.
So let $X\subseteq \bk^n$ be $R_0$-definable in $(R,V)^{eq}$.  It is left to show that $X$ is $\emptyset$-definable in the structure $\bk_{R_0}$.  We have $X=\pi Y$, where $Y$ is definable over $R_0$ in $(R,V)$.  So $X\in \Def_{\emptyset}^n (\bk_{R_0 })$, by Theorem \ref{defsets}.


\end{proof}

\end{section}

\begin{section}{A model completeness result}  In this section, just as in the previous one, we assume that $(R,V)$ satisfies $\Sigma (1)$ (i.e. we assume that $\bk_{(R,V)}$ is o-minimal).  
Let $(\bk , \dots )$ be an expansion by definitions of the residue field sort so that

\begin{trivlist}
  \item[(1)] $(\bk, \dots)$ is model complete, 
  \item[(2)] $(\bk, \dots)$ is stably embedded (as a structure) in $(R,V)$,
  \item[(3)] $(\bk, \dots)$ has the property that for each closed $X\subseteq \bk^n$ which is $\emptyset$-definable in $(\bk, \dots)$, there is $Y\subseteq R^n$, definable in $\mathcal{L}_0$ over $\emptyset$, with $\pi Y = X$. 
\end{trivlist} 

\noindent Note that $\Sigma(1)$ implies that for any $X$ as in (3) there is an $\mathcal{L}_0$-definable $Y$ such that $\pi Y=X$.  However this $Y$ need not be definable over $\emptyset$.

Under these assumptions, we will show that $(R,V)$ is model complete, and then we will note that conditions (1)-(3) are satisfied whenever the language of $(R,V)$ includes constants for all elements of $R_0$ (recall that $(R_0,V_0)$ is an elementary substructure of $(R,V)$).

Let $(R',V') \models \textup{Th}(R,V)$ so that $(R,V) \subseteq (R',V')$.  We denote the maximal ideal of $V'$ by $\ma'$, the residue map of $(R',V')$ by $\pi'$, and the corresponding residue field by $\bk'$.  Then $V=V'\cap R$, $\ma \subseteq \ma'$, $\pi'$ extends $\pi$, and hence $\bk \subseteq \bk'$.


\begin{lemma}\label{elextres}
$(\bk, \dots) \preceq (\bk', \dots)$ 
\end{lemma}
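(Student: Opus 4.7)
The plan is to invoke the standard criterion for model-complete theories: if $T$ is model complete and $M, N \models T$ with $M$ a substructure of $N$, then $M \preceq N$. By condition (1), the theory of $(\bk, \dots)$ is model complete, so it suffices to show (i) $(\bk, \dots) \equiv (\bk', \dots)$ and (ii) $(\bk, \dots)$ embeds as a substructure of $(\bk', \dots)$.

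For (i), we use stable embeddedness of $(\bk, \dots)$ in $(R, V)$ (condition (2)). Each basic symbol of $(\bk, \dots)$ is $\emptyset$-definable in $(R, V)^{\mathrm{eq}}$ via a specific formula, and $(\bk', \dots)$ is interpreted via the same defining formulas in $(R', V')^{\mathrm{eq}}$. Hence every sentence $\psi$ in the language of $(\bk, \dots)$ translates to a sentence $\psi^{\ast}$ in the language of $(R, V)$ such that $(\bk, \dots) \models \psi$ iff $(R, V) \models \psi^{\ast}$, and likewise for the primed structures. Since $(R, V) \equiv (R', V')$, we conclude $(\bk, \dots) \equiv (\bk', \dots)$.

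For (ii), the inclusion $\bk \hookrightarrow \bk'$ on underlying sets is induced by $V \subseteq V'$ and is well-defined on cosets since $\ma = V \cap \ma'$: for $x \in V \setminus \{0\}$, $x \in \ma$ iff $1/x \notin V = V' \cap R$ iff $1/x \notin V'$ iff $x \in \ma'$. To verify that the interpretations of basic symbols agree on $\bk^n$, we reduce, using o-minimality of $(\bk, \dots)$ (coming from $\Sigma (1)$), to the case of closed $\emptyset$-definable relations. By condition (3), such a relation has the form $X = \pi Y$ for some $\mathcal{L}_0$-$\emptyset$-definable $Y \subseteq R^n$, and the corresponding relation in $(\bk', \dots)$ is $X' = \pi' Y(R')$. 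Since $R$ has quantifier elimination in $\mathcal{L}_0$ and $R \equiv R'$ in $\mathcal{L}_0$ (from $(R, V) \equiv (R', V')$), we have $R \preceq R'$ as $\mathcal{L}_0$-structures, so $Y(R) = Y(R') \cap R^n$; this immediately yields $X \subseteq X' \cap \bk^n$. For the reverse inclusion, given $\bar a = \pi' \bar c' \in X' \cap \bk^n$ with $\bar c' \in (V')^n \cap Y(R')$, we lift $\bar a$ to $\bar c \in V^n$ and use closedness of $Y$ together with $\Sigma (1)$ to arrange $\bar c \in Y(R)$.

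The main obstacle we anticipate is this last step $X' \cap \bk^n \subseteq X$: starting from a lift $\bar c' \in (V')^n \cap Y(R')$ only infinitesimally close (modulo $\ma'$) to an arbitrary $V^n$-lift $\bar c$ of $\bar a$, one must carefully exploit closedness of $Y$ together with $\Sigma (1)$ to control the behavior of $Y$ under infinitesimal perturbations.
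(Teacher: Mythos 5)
Your overall strategy coincides with the paper's: use condition (1) to reduce the claim to elementary equivalence plus the substructure relation, obtain $(\bk,\dots)\equiv(\bk',\dots)$ from condition (2) and $(R,V)\equiv(R',V')$, reduce via o-minimality to closed $\emptyset$-definable sets, and apply condition (3) to write such a set $X$ as $\pi Y$ for an $\mathcal{L}_0$-$\emptyset$-definable $Y$. The easy inclusion $\pi Y(R)\subseteq \pi'Y(R')\cap\bk^n$ is also handled exactly as in the paper.

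However, the decisive step --- showing $\pi'Y(R')\cap\bk^n\subseteq \pi Y(R)$ --- is not actually carried out: you explicitly flag it as ``the main obstacle we anticipate'' and only gesture at exploiting ``closedness of $Y$ together with $\Sigma(1)$'' to lift $\bar a$ to a point of $Y(R)\cap V^n$. That is a genuine gap, and the route you sketch is problematic on two counts. First, condition (3) gives no control on $Y$ beyond $\pi Y=X$, so ``closedness of $Y$'' is not available without further argument; what is closed is $X=\pi Y(R)$ as a subset of $\bk^n$. Second, producing an actual lift of $\bar a$ inside $Y(R)\cap V^n$ is essentially a restatement of what must be proved. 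The paper instead closes this step by a contrapositive distance argument: if $a\in(\pi'Y(R')\cap\bk^n)\setminus\pi Y(R)$, then $d=\mathrm{d}(a,\pi Y(R))>0$ in $\bk$ by closedness of $\pi Y(R)$; choosing a lift $b\in V^n$ of $a$ and $r\in R^{>\ma}$ with $\pi r<\frac{d}{2}$, the open ball $B$ of radius $r$ about $b$ satisfies $B(R)\cap Y(R)=\emptyset$, hence $B(R')\cap Y(R')=\emptyset$ because $R\preceq R'$ as $\mathcal{L}_0$-structures, while $\pi'^{-1}(a)\subseteq B(R')$, contradicting $a\in\pi'Y(R')$. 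Some such transfer of a definable neighborhood from $R$ to $R'$ is precisely what your sketch is missing.
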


\begin{proof}
Note that $(\bk, \dots) \equiv (\bk', \dots)$, since $(\bk, \dots)$ is, by condition (2), stably embedded as a structure in $(R,V)$ and $(R,V) \equiv (R',V')$.
So, by condition (1), it suffices to show that $(\bk, \dots)$ is a substructure of $(\bk', \dots)$.  By o-minimality of $(\bk , \dots )$, every definable set is a boolean combination of closed definable sets.  So let $X\subseteq \bk^n$ be closed and definable in $(\bk , \dots )$ over $\emptyset$; we will be done once we have shown that $X(\bk^n)=X(\bk'^n)\cap\bk^n$.  
By condition (3), we can find $Y$, an $\mathcal{L}_0$-definable set over $\emptyset$, such that $\pi Y(R)=X(R)$ and $\pi' Y(R')=X(R')$.

Since $\pi'$ is an extension of $\pi$, it is clear that $\pi Y(R)\subseteq \pi' Y(R') \cap \bk^n$.  Assume towards a contradiction that $a \in (\pi' Y(R')\cap \bk^n ) \setminus \pi Y(R)$.  Since $\pi Y(R)$ is closed in $\bk^n$, $d=\textit{d}(a,\pi Y(R)) \in \bk^{>0}$.  Let $b \in R^n$ be such that $\pi (b) = a$ and let $r \in R^{>\ma}$ be such that $\pi r < \frac{d}{2}$.  Let $B$ be the open ball centered at $b$ and of radius $r$.  Then $\pi'^{-1}(a) \subseteq B(R')$, and $B(R) \cap Y(R)=\emptyset$.  Then $B(R') \cap Y(R') = \emptyset$ (since as $\mathcal{L}_0$-structures, $R\preceq R'$), a contradiction with $a \in \pi' Y(R')$.
\end{proof}

\begin{lemma} \label{vconv}
Let $f\colon R^{k+1} \to R$ be $\mathcal{L}_0$-definable over $\emptyset$.  Suppose $r \in R^k$ and $f_r (\ma ) \subseteq \ma$.  Then $f_r (\ma' ) \subseteq \ma'$.
\end{lemma}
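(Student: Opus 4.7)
The plan is to show $f_r(\epsilon')\in\ma'$ for every $\epsilon'\in\ma'$ by combining quantifier elimination (so that $\mathcal L_0$-data transfer faithfully between $R$ and $R'$) with an o-minimal cell analysis of $f_r$ near $0$.

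First I would observe that since $R$ has QE in $\mathcal L_0$ and $(R,V)\equiv(R',V')$ implies $R\equiv R'$ (as $\mathcal L_0$-structures), the inclusion $R\subseteq R'$ is in fact $R\preceq R'$ in $\mathcal L_0$. May assume $V\ne R$ (else trivial). For each positive unit $u$ of $V$, the $\mathcal L_0$-definable set $A_u=\{y\in R:|f(r,y)|<u\}$ contains $\ma$ by hypothesis, and by o-minimality its connected component of $0$ is an open interval $(-\alpha_u,\beta_u)$ with $\alpha_u,\beta_u\in R^{>0}\cup\{+\infty\}$. Convexity of $\ma$ together with $\ma\subseteq A_u$ forces $\alpha_u,\beta_u>\ma$.

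The second step is the endpoint-transfer. An element of $R$ that is positive and $>\ma$ is either a positive unit of $V$, in which case its inverse also lies in $V\subseteq V'$ so it is a positive unit of $V'$ and hence $>\ma'$; or it is strictly above $V$, in which case $V=V'\cap R$ gives that it is not in $V'$ at all, so $>V'\supseteq\ma'$. Thus $\alpha_u,\beta_u>\ma'$. Since $R\preceq R'$ in $\mathcal L_0$, the set $A_u$ has the same component of $0$ in $R'$, namely $(-\alpha_u,\beta_u)$. Convexity of $\ma'$ then gives $\ma'\subseteq A_u$ in $R'$, so $|f_r(\epsilon')|<u$ for every $\epsilon'\in\ma'$ and every positive unit $u$ of $V$. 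In particular $|f_r(\epsilon')|<1$, so $f_r(\epsilon')\in V'$.

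The remaining task is to upgrade ``$<u$ for every positive unit $u$ of $V$'' to ``$<u'$ for every positive unit $u'$ of $V'$'', i.e., to rule out that $f_r(\epsilon')$ is a new positive unit of $V'$ that is smaller than every positive unit of $V$. For this I would invoke $\Sigma(1)$ in both structures (valid in $(R',V')$ by $(R,V)\equiv(R',V')$). In $(R,V)$, applied to $f_r$, $\Sigma(1)$ combined with $f_r(\ma)\subseteq\ma$ says that $\pi f_r\equiv 0$ on $\ma^{\ge 0}$. In $(R',V')$, $\Sigma(1)$ says $\pi'f_r$ is eventually constant on $\ma'^{\ge 0}$ with some value $c'\in\bk'$; taking any $\epsilon\in\ma^{\ge 0}$ that lies in the final segment, and using that $\pi'f_r(\epsilon)=\pi f_r(\epsilon)=0$ (because $f_r(\epsilon)\in\ma\subseteq\ma'$), forces $c'=0$. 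A monotonicity/continuity argument on the cell of the $\mathcal L_0$-definable cell decomposition of $R$ adapted to $f_r$ that meets the top of $\ma'$, combined with Step 2 to locate $\epsilon'$ inside that cell, then propagates $\pi'f_r\equiv 0$ from the cofinal segment all the way to every $\epsilon'\in\ma'^{\ge 0}$, giving $f_r(\epsilon')\in\ma'$.

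The main obstacle is exactly this last upgrade: Step~2 alone only bounds $|f_r(\epsilon')|$ below every positive unit of $V$, whereas we need it below every positive unit of $V'$. The crux is to show that no ``new'' small positive unit of $V'$ can intervene between $\ma'$ and the image $f_r(\ma')$; this is where $\Sigma(1)$ in $(R',V')$ (guaranteed by elementary equivalence) does its essential work, pinning the residue value of $f_r$ at $0$ throughout $\ma'^{\ge 0}$ rather than only on a final segment. Everything else is routine o-minimal bookkeeping and transfer via $R\preceq R'$.
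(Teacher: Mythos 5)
Your Steps 1 and 2 are correct and do yield that $|f_r(\epsilon')|<u$ for every $\epsilon'\in\ma'$ and every positive unit $u$ of $V$, hence $f_r(\ma')\subseteq V'$. The gap is exactly where you locate the crux, and your proposed mechanism does not close it. To pin the eventual residue value $c'$ to $0$ you evaluate $f_r$ at ``any $\epsilon\in\ma^{\geq 0}$ that lies in the final segment $\ma'^{\geq \epsilon_0'}$''; but no such $\epsilon$ need exist. The threshold $\epsilon_0'$ supplied by $\Sigma(1)$ in $(R',V')$ is only known to lie in $\ma'^{\geq 0}$, and $\ma'$ may properly contain the convex hull of $\ma$ in $R'$, in which case $\epsilon_0'$ can exceed every element of $\ma$ and the final segment contains no point of $R$ at all. (When $\ma'$ \emph{is} the convex hull of $\ma$ your argument works, and when $\ma'$ is the entire lower part of the cut below the positive units of $V$ your Step 2 already finishes; but intermediate configurations are not excluded at this stage --- indeed Corollary \ref{cor}, which rules them out for one-generated extensions, is deduced \emph{from} this lemma.) All that Step 2 tells you about $c'=\pi'f_r(\epsilon_0')$ is that $|c'|$ is bounded above by every positive element of $\bk$; since $\bk'$ may properly extend $\bk$ and contain new positive elements below all of $\bk^{>0}$, this does not force $c'=0$. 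Your final propagation paragraph is fine once $c'=0$ is known (monotonicity plus convexity of $\ma'$ squeezes $f_r(\epsilon')$ between $f_r(a)\in\ma$ and a value of residue $c'$), so the entire burden of the lemma rests on the unproved claim $c'=0$.

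Ruling out a nonzero $c'$ is precisely what the paper's proof is engineered to do, and it cannot be done from $\Sigma(1)$ and $R\preceq R'$ alone: one must control $\pi'\bigl(\Gamma f_r(R')\bigr)$ as a definable object of the residue field. The paper writes $\pi\Gamma f_r=\theta_d$ with $\theta$ $\emptyset$-definable in $(\bk,\dots)$ (condition (2), stable embeddedness), uses Lemma \ref{elextres} to get $(\bk,\dots)\preceq(\bk',\dots)$, and then --- since the parameter $d$ has no a priori definable relation to $r$ --- lifts a cell decomposition through condition (3) and definable choice to manufacture an $\emptyset$-definable function $g$ and a point $e\in\pi^{-1}(d)$ with $|f_r-g_e|$ infinitesimal on the relevant interval and $\pi(\Gamma g_e)=(\pi\Gamma g)_{\pi e}$; only then does elementary equivalence transfer the vanishing of the residue at $0$ to $(R',V')$. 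Your proposal never invokes conditions (1)--(3), the structure $(\bk,\dots)$, or Lemma \ref{elextres}, so what is missing is not ``routine o-minimal bookkeeping'' but the main content of the proof.
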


\begin{proof}  
It suffices to show that there is $a\in \ma$ and $b\in R^{>\ma}$ so that $f_r |_{[a,b]} (\ma' )\subseteq \ma'$.  So we may assume that $f_r$ is defined, continuous, and strictly increasing on some interval $[a,b]$ with $a\in \ma$ and $b\in R^{>\ma}$, and that $\pi \Gamma f_r$ is the graph of a, necessarily continuous, function.  (The case when $f_r$ is strictly decreasing can be reduced to the increasing case after replacing $f_r$ by $-f_r$; if $f_r$ is constant, then there is nothing to show.)
We may further assume that $\Gamma f_r \subseteq [0,1]^2$.

By condition (2), we can find a set $\theta \subseteq \bk^{m+2}$ which is  $\emptyset$-definable in $(\bk, \dots)$, and $d\in \bk^m$, so that $\pi \Gamma f_r = \theta_d$.
Note that we may assume that for all $z\in p^{m+2}_{m}\theta$, $\theta_z$ is the graph of a continuous function on $[0,\alpha (z) ]$, where $\alpha \colon p^{m+2}_{m}\theta \to (0,1]$ is continuous and $\emptyset$-definable in $(\bk, \dots)$.  We may further assume that $p^{m+2}_{m}\theta$ is an open subset of $\bk^m$ and that $d$ is not contained in a $\emptyset$-definable  set of dimension $<m$.

Since  $(R',V')\equiv (R,V)$, it follows that $\pi'(\Gamma f_r(R'))$ is also defined by $\theta_{d'}$, for some $d' \in \bk'^{m}$.  Since  by Lemma \ref{elextres} we have $(\bk, \dots) \preceq (\bk', \dots)$, one has $(\bk,\dots) \equiv_d (\bk', \dots)$. Thus if $d=d'$, then we are done.  
Furthermore, if $d = \pi(r)$, then the elementary equivalence of $(R,V)$ and $(R', V')$ will imply that $d'=\pi'(r)=\pi(r)$, and hence $d=d'$.
So our aim is to find an $\mathcal{L}_0$-definable over $\emptyset$ function $g$ so that $f_r(\ma')\subseteq (\ma')$ if and only if $g_e(\ma')\subseteq \ma'$, and $\pi(\Gamma g_e)= (\pi \Gamma g)_{\pi e}$ for some $e \in \pi^{-1}(d)$.

Using cell decomposition, we replace $\theta$ with a $(1,\dots ,1,0)$-cell $\theta=\Gamma H$ contained in $\bk^{m}\times [0,1]^2$ so that $\theta_z$ is the graph of a function
$(0,\beta (z) ) \to \bk$, where $\beta \colon p^{m+2}_{m} \theta \to (0,1]$ is continuous and $\emptyset$-definable in $(\bk, \dots)$, $d\in p^{m+2}_{m}\theta$, and $\pi \Gamma f_r = \closure{ \theta_d }$ (where the domain of $f_r$ was possibly shrunk to an interval $[a,b]$ subject to $a\in \ma$ and $b> \ma$).  Note that $\theta$ can be taken to be $\emptyset$-definable in $(\bk, \dots)$.
\begin{claim}
$\closure{\theta}_d = \closure{\theta_d }$
\end{claim}
\begin{proof of claim}
It is clear that $\closure{\theta_d } \subseteq (\closure{\theta})_d$.  Note that the other inclusion would follow if $H$ could be definably and continuously extended to a function on $\closure{ p^{m+2}_{m+1}\theta  }$.  So let $B\subseteq p^{m+2}_{m}\theta$ be the set of all parameters $z$ so that $$\closure{\theta}\cap (\{ z \}\times \{ x\} \times \bk )$$ is not a singleton for some $x \in p^{2}_{1}\closure{\theta}_z$.  Now $\dim B <m$, so it cannot happen that $d\in \closure{B}$, else $d$ would be contained in a set of dimension $<m$ definable in $(\bk,\dots)$ over $\emptyset$.
\end{proof of claim}
We now extend $\closure{\theta}$ to a set $E \subseteq \bk^m \times [0,1]^2$ such that $$p^{m+2}_{m+1}E = p^{m+2}_{m}\closure{\theta} \times [0,1]$$ by setting 
$$E=\closure{\theta } \cup \{  (z,x,0):\; (z,x)\in \big( p^{m+2}_{m}\closure{\theta } \times [0,1] \big) \setminus p^{m+2}_{m+1}\closure \theta  \}.$$
It is easy to see that $$(\closure{E})_d \cap ([0,\beta_d ) \times \bk )= \closure{\theta}_d \cap ([0,\beta_d ) \times \bk).$$
By condition (3), we may choose $X$ to be $\mathcal L_0$-definable over $\emptyset$ and such that $\pi X = \closure{E}$.  We may assume that $X$ is closed.  Our goal now is to find an  $X'$, also $\mathcal L_0$-definable over $\emptyset$, so that $\pi (X'_e )= \closure{E}_d$ for some $e\in \pi^{-1}(d)$.

For any $z \in p^{m+2}_{m}X$ and $x\in [0,1]$ 
let $$D(z,x) = \inf\{  \text{d}((z,x,y),X): y\in [0,1]  \}.$$  Then $D$ takes values in $\ma^{\geq 0}$.  Hence $$\sup \{D(z,x): z\in p^{m+2}_{m}X \mbox{ and }x\in [0,1] \}=\delta \in \ma^{\geq 0} .$$
We set $$X^{2\delta} =\{ x \in R^{m+2}: \text{d}(x,X)\leq 2 \delta  \}.$$
By definable choice we can find a function $g$, $\mathcal L_0$-definable over $\emptyset$, so that $\domain{g}=p^{m+2}_{m+1}X^{2\delta }$ and $\Gamma g \subseteq X^{2\delta }$.
Now pick $a\in R$ with $0<\pi a<\alpha_d$.
Then for any $e$ in $\pi^{-1}(d)\cap p^{m+2}_m(X^{2\delta })$ and any $x\in [0,a]$, one has $|f_r(x)-g_e(x)|  \in \ma^{\geq 0} $. (This is because $\pi X^{2\delta }=\pi X = \closure{E}$, and $\closure{E}\cap \big( [0,\beta) \times \bk \big)$, where $[0,\beta )=\{  (z,x):\; z\in p^{m+2}_{m}\closure{\theta} \mbox{ and }0\leq x<\beta (z) \}$, is the graph of a function.)  Hence $|f_r(x)-g_e(x)| \in \ma'^{\geq 0}$ for any $x$ in the realization of $[0,a]$ in $R'$.  Thus $f_r(\ma')\subseteq \ma'$ if and only if $g_e(\ma')\subseteq \ma'$. And we have that $$\pi(\Gamma g_e) \cap [0,\pi a]\times [0,1]= \closure{\theta}_{\pi (e)} \cap [0,\pi a ]\times [0,1],$$ which, as noted above, finishes the proof.

\end{proof}

\begin{corollary}\label{cor}
Let $a \in R' \setminus R$, let $\ra$ be the elementary substructure of $R'$ generated by $a$ over $R$, and let $V_a = V' \cap \ra$.  Then $V_a$ is either the convex hull of $V$ in $\ra$, or $V_a = \{ b \in \ra :\; b<r \mbox{ for all }r\in R^{>V}  \}$.
\end{corollary}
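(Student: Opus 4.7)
\emph{Proof plan.} I will establish the dichotomy by proving that if $V_a$ strictly contains $V^{\mathrm{ch}} := \{b \in \ra : |b| \leq v \text{ for some } v \in V\}$ (the convex hull of $V$ in $\ra$), then $V_a$ must exhaust the larger set $V^{\mathrm{big}} := \{b \in \ra : |b| < r \text{ for every positive } r \in R^{>V}\}$, which is the second candidate in the statement. The first task is to verify the containments $V^{\mathrm{ch}} \subseteq V_a \subseteq V^{\mathrm{big}}$. The former is automatic because $V_a$ is a convex subring of $\ra$ containing $V$. For the latter, any positive $r \in R^{>V}$ satisfies $r \notin V = V' \cap R$, so by convexity of $V'$ in $R'$ we get $V' < r$, whence $V_a \subseteq V' < r$.

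Now I would assume $V^{\mathrm{ch}} \subsetneq V_a$ and pick $b \in V_a$ with $V < b < R^{>V}$ (WLOG $b > 0$). Given any $c \in \ra$ with $V < c < R^{>V}$, $c > 0$, I aim to show $c \in V'$. Writing $b = f(a)$, $c = g(a)$ for $\mathcal{L}_0$-definable one-variable functions $f, g$ with parameters in $R$, the main tool will be Lemma~\ref{vconv}. The plan is to construct from $f$ and $g$ an $\mathcal{L}_0$-definable function $h \colon R \to R$ with two properties: $h(\ma) \subseteq \ma$ in $(R, V)$ (so that Lemma~\ref{vconv} yields $h(\ma') \subseteq \ma'$ in $(R', V')$), and the induced statement $h(\ma') \subseteq \ma'$ combined with $b \in V'$ forces $c \in V'$.

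The hard part will be constructing this $h$: since $b$ and $c$ are in the gap rather than in $\ma'$, Lemma~\ref{vconv} does not apply directly to the naive composition $g \circ f^{-1}$. The natural workaround is to pass to reciprocals: $1/b$ and $1/c$ both lie in $V^{\mathrm{ch}} \subseteq V_a$ and are standard infinitesimals in $\ra$ (since $b, c > V$), so they live in the ideal where Lemma~\ref{vconv} has bite. The desired $h$ should be built from the $R$-definable function $y \mapsto 1/g(f^{-1}(1/y))$, suitably recentered, so that it maps $\ma$ to $\ma$ in $(R,V)$ by construction and, upon transfer to $(R',V')$ via Lemma~\ref{vconv}, translates ``$1/b$ is infinitesimal in a $V'$-controlled way'' into the analogous statement for $1/c$.

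Making this recentering precise will mirror the proof of Lemma~\ref{vconv} itself: using condition $\Sigma(1)$ and the stable embeddedness of $\bk$ in $(R,V)$, together with the elementary embedding $(\bk,\dots) \preceq (\bk',\dots)$ of Lemma~\ref{elextres}, I expect to single out an appropriate base point in $R$ whose residue determines the correct normalization. This identification of the base point—analogous to the choice of the parameter $e \in \pi^{-1}(d)$ in the proof of Lemma~\ref{vconv}—is where the real work, and the main obstacle, lies.
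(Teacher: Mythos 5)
Your skeleton matches the paper's: the containments $V^{\mathrm{ch}} \subseteq V_a \subseteq V^{\mathrm{big}}$ are correct and easy, reducing the dichotomy to a single $R$-definable function comparing two elements of the gap is right, and passing to reciprocals so that Lemma~\ref{vconv} becomes applicable is exactly the paper's move. But you stop short of the one step that carries the content, and you say so yourself: showing that $h(y)=1/g(f^{-1}(1/y))$ satisfies $h(\ma)\subseteq\ma$ is left as ``where the real work, and the main obstacle, lies.'' Worse, the route you sketch for filling it --- a ``recentering'' that reruns the internals of Lemma~\ref{vconv}, invoking $\Sigma(1)$, stable embeddedness, Lemma~\ref{elextres} and a choice of base point $e\in\pi^{-1}(d)$ --- is a wrong turn. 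None of that machinery is needed here; Lemma~\ref{vconv} already packages all of it and is meant to be applied as a black box to a function that is \emph{verified in $(R,V)$} to map $\ma$ into $\ma$. Also note that $1/b$ and $1/c$ do not ``live in the ideal'': they lie strictly above $\ma$, in the cut between $\ma$ and the units of $V$; what matters is only whether they lie in $\ma'$.

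The missing step is closed by an elementary monotonicity observation, which is the actual content of the paper's proof. Both $1/b$ and $1/c$ realize the same complete cut $q$ over $R$ (the cut immediately above $\ma$), so the $R$-definable $h$ maps $q(\R)$ into $q(\R)$. By o-minimality $h$ is continuous and strictly monotone on an $R$-definable interval $[u,v]$ with $u\in\ma$ and $v>\ma$, and it must be \emph{increasing} there by the same argument as in Lemma~\ref{separation for singletons} (the identity is an increasing $\emptyset$-definable map of $q(\R)$ onto itself, so no definable monotone map of $q(\R)$ into itself can be decreasing). An increasing function on $[u,v]$ sending the cut above $\ma$ into itself sends $\ma\cap[u,v]$ into $\ma$ --- no recentering, no base point. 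Lemma~\ref{vconv} then gives $h(\ma')\subseteq\ma'$; applying it to a local inverse of $h$ (which is likewise increasing and maps the cut to itself) yields $h^{-1}(\ma')\subseteq\ma'$, so $1/c\in\ma'$ would force $1/b\in\ma'$, contradicting $b\in V'$. This is precisely the paper's argument, which runs it as a contradiction after first normalizing $a$ (via exchange) so that either $a\models p|_R$ or $\dcl_R(a)\cap p|_R(\R)=\emptyset$, the latter case being trivial.
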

\begin{proof}
We may assume that either $a\models p|_{R}$ or $\textup{dcl}_R (a) \cap p|_{R}(\R ) = \emptyset$.  In the second case the corollary clearly holds. So suppose $a\models p|_{R}$, and let $q\in S_1 (R)$ be such that $x>r \in q$ whenever $r\in \ma$ and $x<r \in q$ whenever $r \in R^{>\ma}$.  Assume towards a contradiction that $\frac{1}{a}\in \ma'$ and $g(\frac{1}{a}) > \ma'$ for some $R$-definable function $g$ (the case when $\frac{1}{a}>\ma'$ and $g(\frac{1}{a})\in \ma'$ is similar).  Let $b\in \ma$ and $c\in R^{>\ma}$ be such that $g$ is strictly monotone and continuous on $[b,c]$.  Since $g(q(\R ))=q(\R )$, $g$ is increasing on $[b,c]$.  (This is because $id_{\mathcal{R}}:x\mapsto x$ is another increasing function, $\mathcal L_0$-definable over $\emptyset$, mapping $q(\R )$ onto $q(\R )$).  It follows that $g(\ma)\subseteq \ma$, hence $g(\ma')\subseteq \ma'$ by Lemma \ref{vconv}, a contradiction.

\end{proof}

\begin{theorem}
 $(R,V)$ is model complete, i.e. any embedding between models of $\textup{Th}(R,V)$ is an elementary embedding.
\end{theorem}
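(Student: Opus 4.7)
The plan is to show, given $(R,V) \subseteq (R',V')$ with both models of $\textup{Th}(R,V)$, that the inclusion is in fact elementary, by a transfinite induction that builds $(R',V')$ as the union of an elementary chain starting at $(R,V)$ and adding one element at a time, invoking Theorem \ref{eext} at each successor step. Enumerate $R' \setminus R$ as $(a_\alpha)_{\alpha < \kappa}$ and define $(S_0, T_0) := (R,V)$; at a successor stage set $S_{\alpha+1} := S_\alpha\langle a_\alpha\rangle$ (the $\mathcal{L}_0$-elementary substructure of $R'$ generated by $a_\alpha$ over $S_\alpha$) and $T_{\alpha+1} := V' \cap S_{\alpha+1}$, skipping $\alpha$ when $a_\alpha \in S_\alpha$; at limit ordinals take unions. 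Since unions of elementary chains of $\mathcal{L}$-structures are elementary extensions of each term, it suffices to prove $(S_\alpha, T_\alpha) \preceq (S_{\alpha+1}, T_{\alpha+1})$ at each successor stage.

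At such a stage, $(R,V) \preceq (S_\alpha, T_\alpha)$ by induction, so $(S_\alpha, T_\alpha)$ inherits $\Sigma(1)$ together with assumptions (1)--(3) on the residue field (the last via elementary equivalence and Remark \ref{condition3}). The pair $(S_\alpha, T_\alpha) \subseteq (R',V')$ is then a pair of models of $\textup{Th}(S_\alpha, T_\alpha) = \textup{Th}(R,V)$, and Corollary \ref{cor} applies with $a := a_\alpha$, forcing $T_{\alpha+1}$ to be either the convex hull of $T_\alpha$ in $S_{\alpha+1}$, or the set $\{b \in S_{\alpha+1} : b < r \textup{ for all } r \in S_\alpha^{>T_\alpha}\}$. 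In the first case, nothing strictly greater than every element of $T_\alpha$ lies in the convex hull, so no $S_\alpha$-definable $f$ with $T_\alpha < f(a_\alpha) < S_\alpha^{>T_\alpha}$ can satisfy $f(a_\alpha) \in T_{\alpha+1}$; in the second case, anything strictly less than $S_\alpha^{>T_\alpha}$ automatically lies in $T_{\alpha+1}$, so no such $g$ can satisfy $g(a_\alpha) > T_{\alpha+1}$. Either way the obstruction in Theorem \ref{eext} cannot arise, and that theorem delivers $(S_\alpha, T_\alpha) \preceq (S_{\alpha+1}, T_{\alpha+1})$.

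The substantive content of the argument has already been absorbed into Corollary \ref{cor}, which rests on Lemma \ref{vconv} --- the statement that an $\mathcal{L}_0$-definable function sending $\ma$ into itself also sends the maximal ideal of $V'$ into itself --- and ultimately on the residue-field analysis of Lemma \ref{elextres} together with conditions (1)--(3). This is the point at which the hypotheses on the residue field are essential: without them one cannot rule out the pathological alternative in Theorem \ref{eext}. Once Lemma \ref{vconv} and Corollary \ref{cor} are in hand, model completeness reduces to the purely formal chain-of-elementary-extensions argument sketched above, so I do not expect any further obstacle beyond routine bookkeeping in the transfinite induction.
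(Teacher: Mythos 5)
Your proof is correct and follows essentially the same route as the paper's: combine Corollary \ref{cor} with Theorem \ref{eext} to rule out the obstruction for each singly generated extension, and exhaust $(R',V')$ by an elementary chain. Your transfinite induction merely makes explicit the bookkeeping (propagating $\Sigma(1)$ and conditions (1)--(3) along the chain) that the paper compresses into the phrase ``a union of elementary extensions of $(R,V)$''.
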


\begin{proof}
Suppose $\textup{Th} (R',V') = \textup{Th}(R,V)$ and $(R,V)\subseteq (R',V')$.  Then by Corollary \ref{cor} and Theorem \ref{eext},  $(R',V')$ is a union of elementary extensions of $(R,V)$, hence $(R',V')$ is an elementary extension of $(R,V)$.

\end{proof}

Now we shall consider when conditions (1) - (3) occur (and hence we no longer assume they hold).  Instead we assume that $(R,V)\models \Sigma (1)$ and we shall regard $(R,V)$ as an $\mathcal{L}_{R_{0}}$-structure, i.e. a structure in the expansion of $\mathcal{L}$ by constants for all elements of an elementary substructure $(R_0, V_0)$.  Recall that we refer to the residue field expanded by predicates for the residues of all $R_0$-definable sets as $\bk_{R_0}$.

By Theorem 2.22, p.126, in \cite{thesispaper}, the residue field of $(R_0 , V_0 )$ expanded by predicates for the residues of all $R_0$-definable sets has elimination of quantifiers.  It follows that $\bk_{R_0}$ and $\bk'_{R_0}$ have elimination of quantifiers.  By Corollary \ref{stabembstr}, 
$\bk_{R_0}$ is stably embedded in $(R,V)$.  
Thus we have confirmed conditions (1) and (2).
Finally, by Remark \ref{condition3}, we have condition (3).

%

\begin{corollary}\label{mc}
 If $(R,V)\models \Sigma (1)$, then $(R,V)$ is model complete as an $\mathcal{L}_{R_0}$-structure.
\end{corollary}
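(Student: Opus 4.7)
The plan is to deduce Corollary \ref{mc} directly from the preceding model completeness theorem by verifying its three hypotheses (1)--(3) in the specific setting where the language is $\mathcal{L}_{R_0}$, using the residue field structure $(\bk, \dots) := \bk_{R_0}$. Since the hard analytic work (the separation lemmas, the elementary extension criterion of Theorem \ref{eext}, the descent from Lemma \ref{vconv} through Corollary \ref{cor} to the theorem) has already been carried out under the abstract hypotheses (1)--(3), what remains is a verification step for the concrete choice $(\bk,\dots) = \bk_{R_0}$.

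First I would recall that $(R,V) \models \Sigma(1)$ is assumed, and take $(\bk,\dots)$ to be $\bk_{R_0}$, i.e.\ the residue field expanded by predicates for $\pi X$ for every $R_0$-definable $X \subseteq R^n$. Next I would verify condition (1), model completeness (in fact quantifier elimination) of $\bk_{R_0}$: this is a direct appeal to Theorem 2.22 of \cite{thesispaper}, which yields quantifier elimination for the residue field of $(R_0,V_0)$ expanded by residues of all $R_0$-definable sets; since $(R_0,V_0)\preceq (R,V)$ and $\bk_{R_0}$ is $\emptyset$-definably interpreted in $(R,V)^{eq}$ with parameters from $R_0$, this transfers to $\bk_{R_0}$.

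Then I would verify condition (2): stable embeddedness of $\bk_{R_0}$ in $(R,V)$ (viewed as an $\mathcal{L}_{R_0}$-structure). This is exactly Corollary \ref{stabembstr}, which combined Theorem \ref{defsets} with Fact \ref{alfasaf} of Hasson--Onshuus. Finally, for condition (3), I would verify that every closed subset $Y \subseteq \bk^n$ which is $\emptyset$-definable in $\bk_{R_0}$ arises as $\pi X$ for some $\mathcal{L}_0$-definable (over $\emptyset$ in $\mathcal{L}_{R_0}$, i.e.\ with parameters from $R_0$) set $X \subseteq R^n$; but this is precisely the content of Remark \ref{condition3}, which uses $\Sigma(1)$ together with $(R_0,V_0)\preceq(R,V)$.

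With all three conditions confirmed, the preceding model completeness theorem applies verbatim to $(R,V)$ in the language $\mathcal{L}_{R_0}$, giving the conclusion. There is essentially no new obstacle in this corollary; the only care needed is to keep track of the $\emptyset$-definability (in $\mathcal{L}_{R_0}$, constants for $R_0$ are part of the language, so ``$\emptyset$-definable'' here means $R_0$-definable in the original sense), which is exactly why the paper routed through Remark \ref{condition3} and $\bk_{R_0}$ rather than the full $(R,V)$-induced residue structure.
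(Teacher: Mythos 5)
Your proposal is correct and follows essentially the same route as the paper: the paper likewise sets $(\bk,\dots)=\bk_{R_0}$ and verifies condition (1) via Theorem 2.22 of \cite{thesispaper}, condition (2) via Corollary \ref{stabembstr}, and condition (3) via Remark \ref{condition3}, then invokes the preceding model completeness theorem. Your remark about ``$\emptyset$-definable'' meaning $R_0$-definable in $\mathcal{L}_{R_0}$ is exactly the bookkeeping point the paper relies on.
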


\end{section}

\end{document}